\def\NAT@def@citea{\def\@citea{\NAT@separator}}
\theoremstyle{plain}
\newtheorem{theorem}{Theorem}[section]
\newtheorem{lemma}[theorem]{Lemma}
\newtheorem{corollary}[theorem]{Corollary}
\theoremstyle{definition}
\theoremstyle{remark}
\newtheorem{remark}{Remark}
\newcommand{\R}{{\mathbb R}}
\newcommand{\N}{{\mathbb N}}
\DeclareMathOperator{\argmin}{argmin}
\DeclareMathOperator{\prox}{prox}
\newcommand{\cC}{{\mathcal C}}
\newcommand{\cE}{{\mathcal E}}
\newcommand{\cG}{{\mathcal G}}
\newcommand{\cH}{{\mathcal H}}
\newcommand{\cO}{{\mathcal O}}
\newcommand{\demi}{\frac{1}{2}}
\newcommand{\ie}{{\it i.e.}\,\,}
\newcommand{\Rb}{\R\cup\{+\infty\}}
\newcommand{\eqdef}{:=}
\newcommand{\seq}[1]{\pa{#1}_{k \in \N}}
\newcommand{\dotp}[2]{\langle #1,\,#2 \rangle}
\newcommand{\norm}[1]{\left\|{#1}\right\|}
\newcommand{\pa}[1]{\left({#1}\right)}
\newcommand{\bpa}[1]{\Big({#1}\Big)}
\newcommand{\brac}[1]{\left[{#1}\right]}
\newcommand{\DS}[1]{\displaystyle{#1}}
\newcommand{\IPAHD}{\mathrm{(IPAHD)}}
\newcommand{\IPAHDNS}{\text{\rm{(IPAHD-NS)}\,}}
\newcommand{\IGAHD}{\text{\rm{(IGAHD)}\,}}
\newcommand{\AVD}[1]{\text{\rm{(AVD)}$_{#1}$\,}}
\newcommand{\DINAVD}[1]{\text{\rm{(DIN-AVD)}$_{#1}$\,}}
\newcommand{\xkp}{x_{k+1}}
\newcommand{\qandq}{\quad \text{and} \quad}
\renewcommand*{\backrefalt}[4]{%
\ifcase #1 %
(Not cited)%
\or
(Cited on p.~#2)%
\else
(Cited on pp.~#2)%
\fi
}
\begin{document}
\title{Convergence of iterates for first-order optimization algorithms with inertia and Hessian driven damping}
\author{
\name{Hedy Attouch\textsuperscript{a}\thanks{CONTACT J. Fadili. Email: Jalal.Fadili@greyc.ensicaen.fr}, 
Zaki Chbani\textsuperscript{b}, Jalal Fadili\textsuperscript{c} and Hassan Riahi\textsuperscript{d}}
\affil{\textsuperscript{a}IMAG, Univ. Montpellier, CNRS, Montpellier, France;\\ \textsuperscript{b}\textsuperscript{d}Cadi Ayyad Univ., Faculty of Sciences S\'emlalia, Mathematics, 40000 Marrakech, Morocco;\\ \textsuperscript{c}Normandie Univ, ENSICAEN, CNRS, GREYC, Caen, France.}
}

\maketitle

\begin{abstract}
In a Hilbert space setting, for convex optimization, we show the convergence of the iterates to optimal solutions for a class of accelerated first-order algorithms. They can be interpreted as discrete temporal versions of an inertial dynamic involving both viscous damping and Hessian-driven damping. The asymptotically vanishing viscous damping is linked to the accelerated gradient method of Nesterov while the Hessian driven damping makes it possible to significantly attenuate the oscillations. By treating the Hessian-driven damping as the time derivative of the gradient term, this gives, in discretized form, first-order algorithms. These results complement the previous work of the authors where it was shown the fast convergence of the values, and the fast convergence towards zero of the gradients.
\end{abstract}

\begin{keywords}
Convergence of iterates; Hessian driven damping; inertial optimization algorithms; Nesterov accelerated gradient method; time rescaling.
\end{keywords}


\section{Introduction}\label{sec:prel} 
%
Unless specified, throughout the paper we make the following assumptions \footnote{In fact, all the algorithmic results require only $f$ to be convex differentiable; it is only when we consider the second order evolution system that we need the second-order derivatives of $f$.}
\[
\boxed{ 
\begin{cases}
\cH \text{ is a real Hilbert space};\\
f: \cH \rightarrow \R  \text{ is a convex function of  class }  \cC^2, \,  S \eqdef \argmin_{\cH} f \neq \emptyset;  \\
\gamma, \,  \beta,  \, b: [t_0, +\infty[ \to \R^+ \text{ are non-negative continuous functions}, \,  t_0 >0.
\end{cases}
}
\]
{Our first objective is to study the convergence to optimal solutions}, when $t \to +\infty$, of the trajectories of  the inertial system with Hessian-driven damping
\begin{equation}\label{basic_eq}
\qquad \ddot{x}(t) + \gamma(t)\dot{x}(t) +  \beta (t) \nabla^2  f (x(t)) \dot{x} (t) + b(t)\nabla  f (x(t)) = 0.
\end{equation} 
In \eqref{basic_eq}, $\gamma(t)$ is the viscous damping parameter, $\beta(t)$ is the Hessian-driven damping parameter, and $b(t)$ is a time scale parameter.
{Then, we will study the convergence of the iterates for the first order optimization algorithms obtained by
temporal discretization of this system.} 
At first glance, the presence of the Hessian may seem to entail numerical difficulties. However, this is not the case as the Hessian $\nabla^2  f $ of $f$ intervenes in the above ODE in the form $\nabla^2  f (x(t)) \dot{x} (t)$, which is nothing but the derivative with respect to time of the mapping $t \mapsto \nabla  f (x(t))$. As a consequence, finite-difference time discretization of this dynamic provides first-order algorithms of the form
\[
\begin{cases}
y_k = x_{k} + \alpha_k ( x_{k}  - x_{k-1}) - \beta_k  \left( \nabla f (x_k)  - \nabla f (x_{k-1}) \right) \\
x_{k+1} = T (y_k) ,
\end{cases}
\]
where the Nesterov extrapolation scheme (\cite{Nest1,Nest2}) is modified by the introduction of the difference of the gradients at consecutive iterates.
The operator $T$, to be specified later, will be directly linked to the gradient of $f$, or to the proximal operator of $f$.
While retaining the fast convergence of the function values reminiscent of the Nesterov
accelerated algorithm, it is shown in \cite{ACFR} that these algorithms enjoy additional favorable properties among which the most important are:
\begin{enumerate}[label=$\bullet$]
\item fast convergence of the gradient towards zero;
\item attenuation of the oscillations;
\item extension to the non-smooth setting;
\item acceleration via the time scaling factor.
\end{enumerate}

\paragraph*{Contributions and relation to prior work} 
In \cite{ACFR}, global convergence of the trajectories (for the continuous dynamic) and of the iterates (for the corresponding algorithms) to optimal solutions remained an open issue. It is our chief goal in this paper to fill in this gap by answering these questions. We also establish that under certain choices of the parameters, the fast convergence rates obtained in \cite{ACFR} can be improved from $\cO(\cdot)$ to ${\rm o}(\cdot)$. By complementing the work of \cite{ACFR}, our new results provide a deep understanding of the behaviour of the system \eqref{basic_eq} and the corresponding discrete algorithms.  

Due to the remarkable properties induced by the presence of the Hessian-driven damping, these inertial dynamics and algorithms have been the subject of active recent developments; see  
the work of \cite{SDJS} on a high resolution perspective, that of
 \cite{BCPF} for application to deep learning, the papers of \cite{AAt1,AAt2} for combination with dry friction, those of \cite{AL1,AL2} and \cite{Kim} for the case of monotone inclusions, of \cite{Kim-F} for optimization algorithms, the work of \cite{ABCR} for handling temporal scaling, the control perspective promoted in \cite{JL}, the damping as a closed-loop control studied in \cite{ABC}, and \cite{BCL} where combination with Tikhonov regularization is studied.


The strategy underlying our proof is built upon Lyapunov analysis with properly designed Lyapunov functions. For the convergence of  values, it suffices to use a carefully chosen energy-type Lyapunov function. By contrast, to show the convergence of the iterates and trajectories, the proof is more involved  and requires the use of a whole family of Lyapunov functions. The convergence of these functions when time tends to infinity then leads to the convergence of their differences. This gives the convergence of the anchor functions, and this is precisely what makes it possible to conclude. It is worth noting that this strategy is already present in the proof of the convergence of the greatest slope for convex functions by Bruck \cite{Bruck}, thanks to Opial's lemma \cite{Opial} (see Lemma~\ref{le.Opial}).

\paragraph*{Contents} The paper is organized as follows. In section~\ref{sec:continuous}, we complete the study carried out in \cite{ACFR} of the continuous dynamic, obtain  additional estimations, and prove the weak convergence of the trajectories.  This will serve as a guide for the study of the convergence of the associated algorithms for which we obtain parallel results. In section~\ref{sec:prox} and we establish the convergence of the iterates for the corresponding proximal algorithms, then in section~\ref{sec:gradient} we consider the gradient algorithms. In section~\ref{sec:numerics} we illustrate this study with an application to Lasso-type problems.

\section{Continuous dynamic: convergence of trajectories}\label{sec:continuous}
%
As a preparatory step to the study of the convergence of the associated algorithms, obtained by temporal discretization, we start by analyzing the convergence properties, as $t \to +\infty$, of the trajectories generated by the dynamic 
\[
\boxed{ 
{\DINAVD{\alpha, \beta, b}}\quad  
\ddot{x}(t) + \frac{\alpha}{t} \dot{x}(t) +  \beta (t) \nabla^2  f (x(t)) \dot{x} (t) + b(t)\nabla  f (x(t)) = 0.
}
\]
The Hessian-driven damping is related to the Dynamic Inertial Newton method, and the viscous damping coefficient $\gamma(t)=\frac{\alpha}{t} $ vanishes as $t\to +\infty$ (Asymptotic Vanishing Damping), hence the terminology.
We limit our study to this choice of the viscous damping coefficient where $\alpha >0$,  because it is the most interesting case. Indeed, it is closely related to the accelerated gradient method of Nesterov, and provides an optimal convergence rate of the values, as we will recall shortly. We consider however a general coefficient $b(t)$, which allows us to take advantage of the temporal scaling aspects, and is useful for applications.

\subsection{Historical overview}
Before delving into the analysis of ${\DINAVD{\alpha, \beta, b}}$, let us review the main convergence properties known for special cases of it.

\paragraph*{Case $\beta \equiv 0$}
When the Hessian-driven damping is dropped and $b(t)\equiv 1$, the system specializes to
\begin{equation*}
\AVD{\alpha} \qquad 
\ddot{x}(t) + \frac{\alpha}{t} \dot{x}(t)  +  \nabla f (x(t)) = 0.
\end{equation*}
It was introduced in the context of convex optimization in \cite{SBC}. For a general convex differentiable function $f$, it provides a continuous version of the accelerated gradient method of Nesterov \cite{Nest1,Nest2}. For $\alpha \geq 3$, each trajectory $x(\cdot)$ of $\AVD{\alpha}$  satisfies the asymptotic  convergence rate of the values $f(x(t)) - \min_{\cH}f =\cO \left(1 /t^2\right)$. For $\alpha >3$, it was shown in \cite{ACPR} that each trajectory converges weakly to a minimizer of $f$. For such values of $\alpha$, it was also proved in \cite{AP} and \cite{May} that the asymptotic convergence rate of the values is actually ${\rm o}(1/t^2)$.
The case  $\alpha = 3$, which corresponds to Nesterov's historical algorithm, is critical. In particular, for this critical value, the question of the convergence of the trajectories remains an open problem (except in one dimension where convergence holds \cite{ACR-subcrit}).
The subcritical case $\alpha \leq 3$ has  been examined in \cite{AAD} and \cite{ACR-subcrit}, with the convergence rate of the objective values $\cO\left( t^{-\frac{2\alpha}{3}}\right)$.
These rates are optimal, \ie they can be reached, or approached arbitrarily close.
\if
{
The corresponding inertial algorithms
\begin{eqnarray*}
\begin{array}{rcl}
\begin{cases}
y_k=  \displaystyle{x_{k} + \left(1 - \frac{\alpha}{k} \right)( x_{k}  - x_{k-1})} \hspace{2.1cm} \vspace{3mm} \\
x_{k+1} = y_k - s \nabla f(y_k)
\end{cases}
\vspace{2mm}
\end{array}
\end{eqnarray*}
are in line with the Nesterov accelerated gradient method. They enjoy similar properties to the continuous case, see Chambolle-Dossal \cite{CD}, and \cite{AC2}, \cite{ACPR}, \cite{AP} for further results.
}
\fi

\paragraph*{Case $\beta >0$}
The Hessian driven damping was first introduced in \cite{AABR}, \cite{AMR} when combined with a viscous damping term whose coefficient is fixed. In particular, the inertial system
\[
\DINAVD{\alpha, \beta} \qquad 
\ddot{x}(t) + \frac{\alpha}{t}\dot{x}(t) +  \beta \nabla^2  f (x(t)) \dot{x} (t) + \nabla  f (x(t)) = 0
\]
was introduced in \cite{APR2}. It combines the asymptotic vanishing  damping (associated with the Nesterov method) with the Hessian-driven damping. At a first glance, this system looks more complicated than \AVD{\alpha}. In fact, in \cite{APR2}, it is shown that
$\DINAVD{\alpha, \beta}$ is equivalent to the first-order system in time and space
\[
\begin{cases}
\dot{x}(t) +  \beta \nabla f (x(t))- \pa{\frac{1}{\beta} -  \frac{\alpha}{t}}  x(t) +  \frac{1}{\beta} y(t) = 0;  	 \\
\dot{y}(t) -  \pa{\frac{1}{\beta} -  \frac{\alpha}{t} +  \frac{\alpha \beta}{t^2}} x(t)
 + \frac{1}{\beta} y(t) =0.
\end{cases}
\]
This provides a natural extension to the case where $f: \cH \to \Rb$ is a proper lower semicontinuous (lsc) and convex function, just replacing the gradient operator $\nabla f$ by the subdifferential $\partial f$. While preserving the convergence properties of the Nesterov  accelerated method, $\DINAVD{\alpha, \beta}$ provides fast convergence to zero of the gradients, and has a taming effect on the oscillations. {More precisely, when $\alpha > 3$, one has
\[
f(x(t))-  \min_{\cH}f = {\rm o}\pa{1/t^2} \; \mbox{ as} \;  t\to +\infty \qandq \int_{t_0}^{+\infty} t^2 \|\nabla f (x(t)) \|^2  dt    < + \infty.
\]
}
The extension of these results to the case of general parameters $\gamma(t)$, $\beta(t)$ and $b(t)$ has been obtained in \cite{ABCR} and \cite{ACFR}.

\subsection{Convergence rates}
Let us first bring some complements concerning the rate of convergence of the values obtained in \cite{ACFR}. They will be very useful in the following section devoted to the convergence of trajectories. Observe that by assuming $t_0 >0$, we circumvent the difficulties raised by the singularity of the damping coefficient $\frac{\alpha}{t}$ at the origin. This is however by no means restrictive in our setting since we are primarily interested in asymptotic analysis.

\noindent 
To lighten notation, we introduce the following function $w: [t_0, +\infty[ \to \R$ which plays a key role in our analysis:
\begin{equation}\label{def-w}
w(t) \eqdef b(t)-\dot{\beta}(t) -\dfrac{\beta(t)}{t}.
\end{equation}
\begin{theorem}\label{ACFR,rescale}
Take $\alpha \geq 1$. Let  $x: [t_0, +\infty[ \rightarrow \cH$ be a solution trajectory of  $ \DINAVD{\alpha, \beta, b}$. Suppose that the following conditions are satisfied: for all $t\geq t_0$
\begin{enumerate}[label={\rm($\cC_{\arabic*}$)},itemindent=10ex]
\item $b(t) > \dot{\beta}(t) +\dfrac{\beta(t)}{t}$; \label{cond:C1}
\item $(\alpha-3)w(t) - t\dot{w}(t) \geq  0$. \label{cond:C2}
\end{enumerate}
Then, $w(t)$ is positive and there exists a positive constant $C_0$ such that
\begin{enumerate}[label={\rm(\roman*)}]
\item \label{ACFR,rescale:claim1}
$0\leq f(x(t))-\min_{\cH} f \leq \dfrac{C_0}{t^2 w(t)} \, \mbox{ for all } \, t \geq t_0$;
\item \label{ACFR,rescale:claim2}
$\DS{\int_{t_{0}}^{+\infty} t^2 \beta(t) w(t)\norm{\nabla f(x(t))}^{2} dt<+\infty}$;
\item \label{ACFR,rescale:claim3}
$\DS{\int_{t_{0}}^{+\infty} t \pa{(\alpha-3)w(t) - t\dot{w}(t)}(f(x(t))-\min_{\cH}f) dt <+\infty}$.
\end{enumerate}
Suppose moreover  that $\alpha > 1$, and that for all $ t \geq t_0$,
\begin{enumerate}[label={\rm($\cC_{\arabic*}$)},itemindent=10ex,start=3]
\item $(\alpha-3)w(t) - t\dot{w}(t)  \geq   \varepsilon  b(t)$, for some $\varepsilon \in ]0,\alpha-1[$. \label{cond:C3}
\end{enumerate}
Then, 
\begin{enumerate}[label={\rm(\roman*)},start=4]
\item \label{ACFR,rescale:claim4}
$\DS{\int_{t_0}^{+\infty} t b(t) (f(x(t))-\min_{\cH}f) dt <+\infty}$.
\item \label{ACFR,rescale:claim5}
$\DS{\int_{t_0}^{+\infty} t \|\dot x(t)\|^2dt <+\infty}$.
\item \label{ACFR,rescale:claim6}
$\DS{\sup_{t\geq t_0} \|x(t)\|<+\infty}$.
\end{enumerate}
In addition,  
\begin{enumerate}[label={\rm(\roman*)},start=7]
\item \label{ACFR,rescale:claim7}
if $\beta(\cdot)$ is non-decreasing, then $\DS{\int_{t_0}^{+\infty}  t w(t)  \dotp{\nabla f(x(t))}{x(t)-x^\star}  dt <+\infty}$;
\item \label{ACFR,rescale:claim8}
if there exists $C_1 >0$ such that $\DS{\frac{d}{dt} \left(t^2b(t)\right)\leq C_1tb(t)}$ for $t$ large enough, then, as $t \to +\infty$,
\[
f (x(t))-\min_{\cH} f ={\rm o} \pa{\dfrac{1}{ t^2b(t)}} \qandq \|\dot x(t)\| = {\rm o} \pa{\dfrac{1}{t}}.
\]
\end{enumerate}
\end{theorem}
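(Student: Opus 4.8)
The plan is to run a Lyapunov analysis based on a single energy functional, following the pattern known for \AVD{\alpha} but enriched to absorb the Hessian term. The first thing I would exploit is that, writing $g(t)\eqdef\nabla f(x(t))$, the Hessian contribution is an exact time derivative, $\beta(t)\nabla^2 f(x(t))\dot x(t)=\frac{d}{dt}\pa{\beta(t)g(t)}-\dot\beta(t)g(t)$, so that \DINAVD{\alpha,\beta,b} rewrites as $\frac{d}{dt}\pa{\dot x+\beta g}+\frac{\alpha}{t}\dot x+\pa{b-\dot\beta}g=0$. This is exactly why the combination $w=b-\dot\beta-\beta/t$ surfaces: it is the effective coefficient of $g$ once $\frac{\alpha}{t}\dot x$ is folded into the momentum variable $\dot x+\beta g$. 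Fixing $x^\star\in S$ and setting $h(t)\eqdef f(x(t))-\min_\cH f\ge0$, $v(t)\eqdef x(t)-x^\star$, I would take
\[
\cE(t)\eqdef t^2 w(t)h(t)+\demi\norm{(\alpha-1)v(t)+t\pa{\dot x(t)+\beta(t)g(t)}}^2+\frac{\mu(t)}{2}\norm{v(t)}^2,
\]
with $\mu(\cdot)\ge0$ an auxiliary weight to be tuned.

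Second, I would differentiate $\cE$ along the trajectory, substitute $\ddot x$ from the reformulated ODE, and systematically use the convexity inequalities $\dotp{g}{v}\ge h\ge0$ and $\dot h=\dotp{g}{\dot x}$. The energy is designed so that the indefinite cross terms (notably those of the form $t^2\dotp{g}{\dot x}$) cancel, leaving a dissipation $\dot\cE\le -c_1\,t^2\beta w\norm{g}^2-c_2\,t\pa{(\alpha-3)w-t\dot w}h$. Condition $(\cC_1)$ is literally the statement $w>0$, and $(\cC_2)$ makes the second coefficient $\ge0$, so $\dot\cE\le0$. Positivity of $w$ is thus immediate; claim (i) follows from $t^2 w h\le\cE(t)\le\cE(t_0)$, and integrating $\dot\cE\le-(\cdots)$ over $[t_0,+\infty[$ with $\cE\ge0$ yields the integral bounds (ii) and (iii). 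The bulk of the effort here is the bookkeeping needed to verify the cancellations and to fix $\mu$.

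Third, the additional condition $(\cC_3)$ drives the second block. Claim (iv) is then essentially free: since $(\alpha-3)w-t\dot w\ge\eps b$, finiteness in (iii) forces $\int_{t_0}^{+\infty}t b\,h\,dt<+\infty$. For (v) I expect the strict margin $\eps<\alpha-1$ to be precisely what lets the energy inequality retain an extra term $-c_3\,t\norm{\dot x}^2$ after adding a controlled multiple of $\frac{d}{dt}\dotp{v}{\dot x}$ (which manufactures a genuine $+t\norm{\dot x}^2$); integrating gives (v). Claim (vi) I would get from the anchor $p_\star(t)\eqdef\demi\norm{v(t)}^2$: from the product rule $\ddot p_\star+\frac{\alpha}{t}\dot p_\star=\norm{\dot x}^2+\dotp{v}{\ddot x}+\frac{\alpha}{t}\dotp{v}{\dot x}$, substituting the ODE, discarding the nonpositive convex term $-b\dotp{g}{v}\le0$ and again writing the Hessian part as a time derivative, one is left with a differential inequality whose right-hand side is integrable by (ii), (iv), (v); a standard Gronwall/quasi-Fej\'er argument then bounds $p_\star$.

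Finally, claim (vii) should come from not discarding, in the computation above, the nonnegative term $t w\dotp{g}{v}$ that becomes available exactly when $\dot\beta\ge0$; keeping it on the dissipation side and integrating yields the stated bound. Claim (viii), the upgrade from $\cO$ to ${\rm o}$, is the most delicate point. I would set $p(t)\eqdef t^2 b(t)h(t)$ and note $\int_{t_0}^{+\infty}p(t)/t\,dt=\int_{t_0}^{+\infty}t b\,h\,dt<+\infty$ by (iv); since $\int^{+\infty}dt/t=+\infty$, this already forces $\liminf p=0$. Passing from $\liminf$ to $\lim$ requires controlling the variation of $p$: in $\dot p=\frac{d}{dt}(t^2 b)\,h+t^2 b\dotp{g}{\dot x}$, the first term is integrable thanks to $\frac{d}{dt}(t^2 b)\le C_1 t b$ combined with (iv), while the cross term $t^2 b\dotp{g}{\dot x}$ must be dominated by Cauchy--Schwarz using the square-integrable quantities from (ii) and (v). Taming this last cross term is where I expect the real difficulty to lie, and it is the same mechanism that, run on $t^2\norm{\dot x}^2$ via (v), simultaneously yields $\norm{\dot x}={\rm o}(1/t)$.
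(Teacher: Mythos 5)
Your handling of claims (i)--(v) and (vii) is, modulo packaging, the paper's own argument: your single energy $\cE$ with the tunable weight $\mu$ is the paper's \emph{family} $E_{\lambda}(t)=\delta(t)\pa{f(x(t))-f(x^\star)}+\frac12\norm{v_\lambda(t)}^2+\frac{c}{2}\norm{x(t)-x^\star}^2$, $v_\lambda=\lambda(x-x^\star)+t\pa{\dot{x}+\beta\nabla f(x)}$, where $\lambda=\alpha-1$ yields (i)--(iii), the perturbed choice $\lambda=\alpha-1-\varepsilon$ (your ``controlled multiple of $\dotp{v}{\dot x}$'') yields (v), and a $\rho$/$(1-\rho)$ splitting of the term $\lambda t w\dotp{\nabla f(x)}{x-x^\star}$ yields (vii). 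However, your route to (vi) does not close. After substituting the ODE and rewriting the Hessian term as a time derivative, the differential inequality for $p_\star$ contains the terms $\beta\dotp{\nabla f(x)}{\dot x}$ and $\dot\beta\dotp{\nabla f(x)}{x-x^\star}$ (or the boundary/integral pair produced by $\frac{d}{dt}\pa{\beta\dotp{\nabla f(x)}{x-x^\star}}$), and none of (ii), (iv), (v) controls them: (ii) bounds $\int t^2\beta w\norm{\nabla f(x)}^2dt$, which helps only when $\beta(t)\leq C\,tw(t)$ --- that is condition ($\cC_4$) of the \emph{next} theorem, not an assumption here --- while weighted integrals of $\dotp{\nabla f(x)}{x-x^\star}$ are exactly what claim (vii) extracts, with effort and only under the extra hypothesis $\dot\beta\geq0$. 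The fix is already inside your own energy: for the perturbed choice used in (v), the coefficient of $\norm{x-x^\star}^2$ is $\varepsilon(\alpha-1-\varepsilon)/2>0$, and since that energy is non-increasing with all terms non-negative, $\norm{x(t)-x^\star}$ is bounded. That is the paper's one-line proof of (vi).

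The serious gap is (viii). The cross term $t^2b\dotp{\nabla f(x)}{\dot x}$ cannot be dominated by Cauchy--Schwarz from (ii) and (v): that would require $\int_{t_0}^{+\infty} t^3b(t)^2\norm{\nabla f(x(t))}^2dt<+\infty$, whereas (ii) only provides $\int_{t_0}^{+\infty} t^2\beta(t) w(t)\norm{\nabla f(x(t))}^2dt<+\infty$; in the case $\beta\equiv0$, $b\equiv1$ (the classical \AVD{\alpha} setting, where (viii) is precisely the known ${\rm o}(1/t^2)$ rate) the latter is vacuous and gives no gradient control at all. The key point you are missing is that this term must never be estimated: it is an exact derivative, $t^2b\dotp{\nabla f(x)}{\dot x}=t^2b\,\frac{d}{dt}h(t)$ with your notation $h(t)=f(x(t))-\min_\cH f$. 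The paper takes the inner product of the equation with $t^2\dot x(t)$, discards the Hessian contribution by positive semidefiniteness ($\dotp{\nabla^2 f(x)\dot x}{\dot x}\geq0$), and after integration by parts in time obtains that
\[
B(t)\eqdef\frac{t^2}{2}\norm{\dot x(t)}^2+t^2b(t)h(t)+(\alpha-1)\int_{t_0}^{t}\tau\norm{\dot x(\tau)}^2d\tau-\int_{t_0}^{t}h(\tau)\,\frac{d}{d\tau}\pa{\tau^2b(\tau)}d\tau
\]
is non-increasing; the hypothesis $\frac{d}{dt}\pa{t^2b(t)}\leq C_1tb(t)$ combined with (iv) bounds $B$ from below, hence $B$ converges, and then (iv)--(v) give that $\frac{t^2}{2}\norm{\dot x(t)}^2+t^2b(t)h(t)$ itself has a limit $\ell\geq0$. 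If $\ell>0$, then $\frac{t}{2}\norm{\dot x(t)}^2+tb(t)h(t)\geq\frac{\ell}{2t}$ for large $t$, contradicting the integrability in (iv) and (v) --- this is your $\liminf$ observation, now upgraded to a limit. Both ${\rm o}$-rates in (viii) follow simultaneously, with no gradient estimate ever needed.
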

\begin{proof}
Take $x^\star \in S \eqdef \argmin_{\cH} f$. Let us define for $t\geq t_0$
\begin{equation}
E_{\lambda}(t) \eqdef \delta(t)(f(x(t))-f(x^\star))+ \frac{1}{2}\norm{v_{\lambda}(t)}^{2}
+\frac{c}{2}\norm{x(t) -x^\star}^{2}, \label{def:E2}
\end{equation}
where 
\[
v_{\lambda}(t)\eqdef\lambda(x(t)-x^\star)+t\pa{\dot{x}(t)+\beta(t)\nabla f(x(t))}
\]
and $\delta(\cdot)$, $\lambda$, $c$ are positive parameters that will be adjusted along the proof. 
The function $ E_{\lambda}(\cdot)$ will serve as Lyapunov's function.\\
\noindent 
Differentiating $E_{\lambda}$ gives
\begin{eqnarray}
\dfrac{d}{dt} E_{\lambda}(t)&=&\dot{\delta}(t)(f(x(t))-f(x^\star))+\delta(t) \dotp{\nabla f(x(t))}{\dot{x}(t)}+ \dotp{v_{\lambda}(t)}{\dot{v}_{\lambda}(t)} \nonumber \\
 &+& c\dotp{x(t) - x^\star}{\dot{x}(t)}. \label{der-E}
\end{eqnarray}
Using the constitutive equation $ \DINAVD{\alpha, \beta, b}$, we have
\begin{align*}
\dot{v}_{\lambda}(t) 
& = (\lambda +1)\dot{x}(t)+\beta(t) \nabla f(x(t))
+ t\pa{\ddot{x}(t)+\dot{\beta}(t)\nabla f(x(t))+\beta(t) \nabla^{2}f(x(t))\dot{x}(t)}\\ 
&= (\lambda +1)\dot{x}(t)+ \beta(t) \nabla f(x(t))+t\pa{-\frac{\alpha}{t}\dot{x}(t)+(\dot{\beta}(t)-b(t))\nabla f(x(t))}\\ 
&= (\lambda +1 -\alpha)\dot{x}(t) - tw(t)\nabla f(x(t)).
\end{align*}
Therefore,
\begin{align*}
\dotp{v_{\lambda}(t)}{\dot{v}_{\lambda}(t)} 
&=(\lambda +1 -\alpha) \dotp{\lambda(x(t)-x^\star)+t\pa{\dot{x}(t)+\beta(t)\nabla f(x(t))}}{\dot{x}(t)} \\
&- tw(t) \dotp{\lambda(x(t)-x^\star)+t\pa{\dot{x}(t)+\beta(t)\nabla f(x(t))}}{\nabla f(x(t))} \\
&=\lambda (\lambda +1 -\alpha)\dotp{x(t)-x^\star}{\dot{x}(t) } + t (\lambda +1 -\alpha) \|  \dot{x}(t)\|^2 \\
&+ \Big( t \beta(t)(\lambda +1 -\alpha) - t^2 w(t)] \Big)  \dotp{\nabla f(x(t))}{\dot{x}(t)}\\
&- \lambda t w(t) \dotp{\nabla f(x(t))}{x(t)-x^\star } \\
&- t^2 \beta (t) w(t) \|\nabla f(x(t))\|^2 .
\end{align*}
Let us go back to \eqref{der-E}. Take $\delta(t)$ so that the terms $\dotp{\nabla f(x(t))}{\dot{x}(t)}$ cancel. This gives
\begin{align}
\delta(t) 
&= t^2 \Big(b(t) -\dot{\beta}(t)-\dfrac{\beta(t)}{t}\Big) -t \beta (t) (\lambda +1 -\alpha) \nonumber \\
&= t^2 w(t)- (\lambda +1 -\alpha)t \beta (t). \label{def:delta}
\end{align}
Also take $c$ so that the terms $\dotp{x(t)-x^\star}{\dot{x}(t) }$ cancel. This gives
\begin{equation}\label{def:c}
c = -\lambda (\lambda +1 -\alpha).
\end{equation}
Since we want $c$ to be non-negative, we take $\lambda > 0$ such that
\begin{equation}\label{lambda_bound}
\lambda \leq \alpha -1.
\end{equation}
From \eqref{lambda_bound}, $w(t) >0$ (by assumption \ref{cond:C1}), and the definition \eqref{def:delta} of $\delta(\cdot)$, we have that $\delta (\cdot)$ is non-negative.
Using this into \eqref{der-E}, the latter reads
\begin{align}
\dfrac{d}{dt}E_{\lambda}(t)
&= \dot{\delta}(t)(f(x(t))-f(x^\star))-\lambda t w(t) \dotp{\nabla f(x(t))}{x(t)-x^\star } \nonumber\\
&- t^2 \beta (t) w(t) \|\nabla f(x(t))\|^2 +    t(\lambda +1 -\alpha)\|  \dot{x}(t)\|^2 \nonumber\\
&= \dot{\delta}(t)(f(x(t))-f(x^\star)) - \lambda t w(t) \dotp{\nabla f(x(t))}{x(t)-x^\star } \nonumber\\
&- t^2 \beta (t) w(t)       \|\nabla f(x(t))\|^2 +    t(\lambda +1 -\alpha)\|  \dot{x}(t)\|^2 \label{integral_est},
\end{align}
where we used definition \eqref{def-w} of $w(t)$.
By convexity of $f$, we have
\[
f(x^\star)-f(x(t)) \geq \dotp{\nabla f(x(t))}{x^\star-x(t)} ,
\]
and thus \eqref{integral_est} becomes
\begin{multline}
\dfrac{d}{dt}E_{\lambda}(t)+t^2\beta(t)w(t)\norm{\nabla f(x(t))}^{2}+  \brac{\lambda t w(t) -\dot{\delta}(t)}(f(x(t))-f(x^\star)) \\
\leq  t(\lambda + 1 -\alpha)\|  \dot{x}(t)\|^2 . \label{bacic-Liap-22}
\end{multline}
In view of \eqref{lambda_bound}, we obtain
\begin{equation}\label{bacic-Liap-22-a}
\dfrac{d}{dt}E_{\lambda}(t)+t^2\beta(t)w(t)\norm{\nabla f(x(t))}^{2}+\brac{\lambda t w(t) -\dot{\delta}(t)}(f(x(t))-f(x^\star)) \leq  0 . 
\end{equation}
To go further, we must take into account the sign of
\begin{equation}\label{bacic-Liap-22-b}
\lambda t w(t) -\dot{\delta}(t)= t \pa{(\lambda-2)w(t) - t\dot{w}(t)} + (\lambda +1 -\alpha) (\beta (t) + t \dot{\beta}(t)),
\end{equation}
which  depends on the  value of the parameter $\lambda$. Recall that we are free to choose $\lambda$ under the sole condition that it satisfies \eqref{lambda_bound}.

\paragraph*{Choice $\lambda = \alpha-1$.} This corresponds to the largest possible value for $\lambda$, which is the situation considered in \cite{ACFR}. Then 
\begin{equation}\label{bacic-Liap-22-c}
\lambda t w(t) -\dot{\delta}(t)= t \Big(  (\alpha-3)w(t) - t\dot{w}(t)\Big) .
\end{equation}
According to the hypothesis \ref{cond:C2}, the left hand side in \eqref{bacic-Liap-22-c} is non-negative. Thus \eqref{bacic-Liap-22-a} entails that  $\dfrac{d}{dt}E_{\alpha-1}(t)\leq 0$. In turn, $E_{\alpha-1}(\cdot)$ is non-increasing, and therefore $E_{\alpha-1}(t) \leq E_{\alpha-1}(t_0)$ for all $t\geq t_0$. On the other hand, by \eqref{def:delta}, we have
\[
\delta(t) = t^2 w(t)
\]
which is again non-negative by \ref{cond:C1}. So, $E_{\alpha-1}(t)$ writes (note that, with this choice of $\lambda$, and by \eqref{def:c} we have $c=-\lambda (\lambda +1 -\alpha)=0 $)
\begin{equation*}
E_{\alpha-1}(t)\eqdef t^2 w(t)(f(x(t))-f(x^\star))+ \frac{1}{2}\norm{(\alpha-1)(x(t)-x^\star)+t\bpa{\dot{x}(t)+\beta(t)\nabla f(x(t))}}^{2}. 
\end{equation*}
Since all the terms that enter $E_{\alpha-1}(\cdot)$ are non-negative, we obtain, for all $t\geq t_0$
\[
f(x(t))-f(x^\star)\leq \dfrac{E_{\alpha-1}(t_0)}{t^{2}w(t)} 
\]
which is claim~\ref{ACFR,rescale:claim1} with $C_0= E_{\alpha -1}(t_0)$.
In addition, by integrating \eqref{bacic-Liap-22-a} we obtain
\[
\int_{t_{0}}^{+\infty} t^2\beta(t)w(t)\norm{\nabla f(x(t))}^{2} dt \leq  E_{\alpha-1}(t_0)  <+\infty ,
\]
and
\[
\int_{t_{0}}^{+\infty} t \Big(  (\alpha-3)w(t) - t\dot{w}(t)\Big)(f(x(t))-f(x^\star)) dt \leq E_{\alpha -1}(t_0)<+\infty ,
\]
which gives statements \ref{ACFR,rescale:claim2} and \ref{ACFR,rescale:claim3}.

\paragraph*{Choice $\lambda = \alpha - 1 - \varepsilon$}, where $\varepsilon > 0$ is given by condition \ref{cond:C3}, which is now supposed to be satisfied.
Then, condition \eqref{lambda_bound} is obviously satisfied, and the the above calculations are still valid until \eqref{bacic-Liap-22} which now reads
\begin{equation}\label{bacic-Liap-33}
\dfrac{d}{dt}E_{\lambda}(t)+t^2\beta(t)w(t)\Vert\nabla f(x(t))\Vert^{2}+  \Big[ \lambda t w(t) -\dot{\delta}(t)\Big](f(x(t))-f(x^\star))\ + \varepsilon  t\|  \dot{x}(t)\|^2 \leq 0. 
\end{equation}
This choice of $\lambda$ and \eqref{def:delta} yield
$
\delta (t) =  t^2 w(t) +\varepsilon t \beta (t).
$ 
Therefore
\begin{eqnarray*}
\lambda t w(t) -\dot{\delta}(t)
&=&  \lambda t w(t) - 2t w(t) - t^2 \dot{w}(t) - \varepsilon  \beta (t) - \varepsilon t \dot{\beta}(t)\\
&=&  (\lambda -2) t w(t) - t^2 \dot{w}(t) - \varepsilon  \beta (t) - \varepsilon t \dot{\beta}(t)\\ 
&=&  (\alpha -3 -\varepsilon) t w(t) - t^2 \dot{w}(t) - \varepsilon t \left(  \dot{\beta}(t)+ \frac{\beta (t)}{t} \right). 
\end{eqnarray*}
Plugging $w(t)$ defined in \eqref{def-w} in the last identity, we get
\begin{eqnarray*}
\lambda t w(t) -\dot{\delta}(t)
&=& (\alpha -3 -\varepsilon) t w(t) - t^2 \dot{w}(t) - \varepsilon t \pa{b(t)- w(t)}\\
&=&  t\bpa{(\alpha -3)  w(t) - t \dot{w}(t) - \varepsilon b(t)}.
\end{eqnarray*}
Therefore, $\lambda t w(t) -\dot{\delta}(t)$ is non-negative under the condition \ref{cond:C3}.
By integrating \eqref{bacic-Liap-33}, and since $\varepsilon >0$ we obtain
\[
\int_{t_0}^{+\infty} t \|\dot x(t)\|^2dt <+\infty ,
\]
hence establishing \ref{ACFR,rescale:claim5}.
Since by \eqref{def:c}, $c=\varepsilon (\alpha-1-\varepsilon) >0 $ (recall  $0 <\varepsilon < \alpha -1$), we have that $E_\lambda(t)$ is non-negative, and from \eqref{bacic-Liap-33} that it is is a non-increasing function. In turn, it is bounded from above, and so is $\norm{x(t) -x^\star}^{2}$. Therefore, the trajectory $x(\cdot)$ fulfills claim~\ref{ACFR,rescale:claim6}.

Combining item \ref{ACFR,rescale:claim3} and condition \ref{cond:C3}, we have 
\[
\int_{t_0}^{+\infty} \varepsilon t b(t) (f(x(t))-f(x^\star)) dt \leq
\int_{t_{0}}^{+\infty} t \Big(  (\alpha-3)w(t) - t\dot{w}(t)\Big)(f(x(t))-f(x^\star)) dt <+\infty ,
\]
which is statement~\ref{ACFR,rescale:claim4}.

We now turn to showing \ref{ACFR,rescale:claim7}. For this, let $\rho \in ]0,1[$ a positive parameter to be adjusted. We embark from \eqref{integral_est}, and split the term  $\lambda t w(t)  \dotp{\nabla f(x(t))}{x(t)-x^\star }$ into the sum of two terms with respective weights $\rho$, and $1 - \rho$. We then apply  the convex subdifferential inequality to the one with weight $1-\rho$. Doing so, we obtain
\begin{equation}
\dfrac{d}{dt}E_{\lambda}(t) + \Big( (1-\rho) \lambda t w(t)  - \dot{\delta}(t)\Big)(f(x(t))-f(x^\star))+\rho \lambda t w(t)  \dotp{\nabla f(x(t))}{x(t)-x^\star} \leq 0 . \label{integral_est_bb}
\end{equation}
The point is to show that by appropriately choosing $\lambda$ and $\rho$, we can make the quantity
\[
A(t)\eqdef (1-\rho) \lambda t w(t)  - \dot{\delta}(t)
\]
non-negative. Take  $\lambda = \alpha -1$. Hence 
$
\delta (t) =  t^2 w(t) .
$ 
The same calculation as above gives
\begin{align*}
A(t)
&= t\bpa{(1-\rho)(\alpha -1) w(t) - 2 w(t) - t \dot{w}(t)} \\
&= t\bpa{\pa{(\alpha -3) -\rho (\alpha -1)} w(t) - t \dot{w}(t)}.
\end{align*}
Take $\rho= \DS{\frac{\varepsilon}{\alpha-1}} \in ]0,1[$, where $\varepsilon$ is given by condition \ref{cond:C3}. Then, 
\begin{equation}\label{def:A}
A(t)= t\bpa{(\alpha -3 -\varepsilon ) w(t) - t \dot{w}(t)}.
\end{equation}
By definition of $w$, and since $\beta$ has been supposed non-decreasing, we have
\begin{equation}\label{def-w-consequence}
b(t) = w(t)+ \dot{\beta}(t) +\dfrac{\beta(t)}{t} \geq w(t).
\end{equation}
Consequently, \ref{cond:C3} entails $(\alpha-3)w(t) - t\dot{w}(t)  \geq   \varepsilon  w(t)$, and thus the quantity $A(t)$ in \eqref{def:A} is non-negative, {as desired}. Integrating \eqref{integral_est_bb}, and since $\lambda = \alpha - 1 > 0$, $\rho>0$, and $E_\lambda$ is bounded from below (in fact non-negative), we obtain claim \ref{ACFR,rescale:claim7}.

It remains to prove \ref{ACFR,rescale:claim8}. {Taking}  the inner product of $\DINAVD{\alpha, \beta, b}$ with $t^2\dot x(t)$, we obtain using the chain rule and non-negative definiteness of $\nabla^2 f(x(t))$,
\begin{eqnarray*}
0&=&  t^2 \dotp{\ddot x(t)}{\dot x(t)} +\alpha t\|\dot x(t)\|^2 + t^2 \beta(t) \dotp{\nabla^2 f (x(t))\dot x(t)}{\dot x(t)}  + t^2b(t) \dotp{\nabla f (x(t))}{\dot x(t)} \\
&\geq& t^2 \frac{d}{dt}\pa{\frac12 \| \dot x(t) \|^2} + \alpha t \|\dot x(t)\|^2   + t^2b(t) \frac{d}{dt}\pa{ f (x(t))-\min_{\cH} f}\\
&=& \frac{d}{dt}\pa{\frac{ t^2}2 \| \dot x(t) \|^2 + t^2b(t) \pa{f (x(t))-\min_{\cH} f}} + (\alpha-1) t \|\dot x(t)\|^2   \\
&&- \pa{f (x(t))-\min_{\cH} f}\frac{d}{dt} \pa{t^2b(t)}.
\end{eqnarray*}
Integrating from $s$ to $t$, we get
\begin{eqnarray*}
0&\geq &  \frac{ t^2}2 \|\dot x(t)\|^2 + t^2b(t) \pa{ f (x(t))-\min_{\cH} f} -  \frac{s^2}{2} \|\dot  x(s)\|^2 - s^2b(s) \pa{f (x(s))-\min_{\cH} f} \\
&&+ (\alpha -1) \int_s^t\tau\|\dot x(\tau)\|^2d\tau 
- \int_s^t\pa{f (x(\tau))-\min_{\cH} f} \frac{d}{dt}\pa{\tau^2b(\tau)}	d\tau.
\end{eqnarray*}
Consequently, by setting 
\begin{eqnarray*}
{B(t)} 
&\eqdef& \frac{t^2}{2}\|\dot  x(t)\|^2 
+t^2b(t) \pa {f (x(t))-\min_{\cH} f} + (\alpha -1) \int_{t_0}^t\tau\|\dot x(\tau)\|^2d\tau\\
&& - \int_{t_0}^t\pa{f (x(\tau))-\min_{\cH} f} \frac{d}{dt}\pa{\tau^2b(\tau)} d\tau,
\end{eqnarray*}
we deduce that the function ${B(\cdot)} $ is non-increasing on $[t_0,+\infty[$. To ensure its convergence, we need to justify that ${B(t)}$  is bounded from below. First, the first three terms entering ${B(t)}$ are non-negative. We now use the condition $\frac{d}{dt} \left(t^2b(t)\right)\leq C_1tb(t)$ for $t$ large enough to deduce the existence of $t_1\geq t_0$ such that for all $t\geq t_1$
 \begin{eqnarray*}
{B(t)} \geq - C_1\int_{t_0}^\infty \tau b(\tau) \pa{f (x(\tau))-\min_{\cH} f} d\tau >-\infty,
\end{eqnarray*}
where we used statement~\ref{ACFR,rescale:claim4}.
Therefore, we have that ${B(t)}$ converges as $t\rightarrow +\infty$. Using  {again assertions} \ref{ACFR,rescale:claim4} and \ref{ACFR,rescale:claim5} and the hypothesis on $b$, we deduce the existence of  
\[
\ell \eqdef \lim_{t\rightarrow +\infty}\brac{\frac{t^2}{2} \|\dot x(t)\|^2 +t^2b(t)	\pa{f (x(t))-\min_{\cH}} f}\geq 0.
\] 
Suppose $\ell >0$, then there exists $t_2\geq t_1$ such that for every $t\geq t_2$
\begin{equation}\label{contradict-conv1}
\frac{t}{2} \|\dot  x(t)\|^2 
+tb(t)	\pa{f (x(t))-\min_{\cH} f} \geq \frac{\ell}{2t}.
\end{equation}
By integrating \eqref{contradict-conv1}, this leads to a contradiction with \ref{ACFR,rescale:claim4} and \ref{ACFR,rescale:claim5}.
We conclude that 
\[
\lim_{t\rightarrow \infty}\brac{\frac{t^2}{2} \|\dot x(t)\|^2 +t^2b(t)	\pa{f (x(t))-\min_{\cH} f}} = 0,
\]
which gives, as $t \to +\infty$ 
\[
f (x(t))-\min_{\cH} f ={\rm o} \pa{\dfrac{1}{ t^2b(t)}} \qandq \norm{\dot{x}(t)} ={\rm o} \pa{\dfrac{1}{ t}}.
\]
This completes the proof.
\end{proof}

\subsection{Convergence of the trajectories}

Based on the previous Lyapunov analysis, and using Opial's lemma \cite{Opial} (which is a continuous time version of Lemma~\ref{le.Opial}), we now prove the following convergence result. Recall $w(\cdot)$ defined in \eqref{def-w}.

\begin{theorem} \label{ACFR,rescale_convergence_continu}
Take $\alpha > 1$. Let $\beta (\cdot)$ be a non-decreasing function. Assume that \ref{cond:C1}-\ref{cond:C2}-\ref{cond:C3} in Theorem {\rm \ref{ACFR,rescale}} hold.
Suppose moreover that
\begin{enumerate}[label={\rm($\cC_{\arabic*}$)},itemindent=10ex,start=4]
\item $\lim_{t \to +\infty} \dfrac{\beta (t)}{tw(t)}=0$, and \label{cond:C4} 
\item $\lim_{t \to +\infty} \dfrac{1}{t^2 w(t)}=0$. \label{cond:C5} 
\end{enumerate}
Let  $x: [t_0, +\infty[ \rightarrow \cH$ be a solution trajectory of $ \DINAVD{\alpha, \beta, b}$.  
Then,
\begin{enumerate}[label={\rm(\roman*)}]
\item for all $x^\star \in S $, the limit of $\norm{x(t) -x^\star}$ exists,  as $t\to +\infty$.
\item $x(t)$ converges weakly to a point in $S$, as $t\to +\infty$.
\end{enumerate}
\end{theorem}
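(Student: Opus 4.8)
The plan is to invoke the continuous-time Opial lemma (the time-continuous analogue of Lemma~\ref{le.Opial}): it suffices to establish that (a) for every $x^\star\in S$ the limit $\lim_{t\to+\infty}\norm{x(t)-x^\star}$ exists, and (b) every weak sequential cluster point of $x(\cdot)$ lies in $S$. I would dispatch the easy point (b) first. The trajectory is bounded by item~\ref{ACFR,rescale:claim6} of Theorem~\ref{ACFR,rescale} (whose hypotheses hold here, since $\alpha>1$ and \ref{cond:C1}--\ref{cond:C3} are assumed), so weak cluster points exist. If $x(t_n)\rightharpoonup\bar x$ along some $t_n\to+\infty$, weak lower semicontinuity of the convex function $f$ gives $f(\bar x)\le\liminf_n f(x(t_n))$; but item~\ref{ACFR,rescale:claim1} together with \ref{cond:C5} yields $f(x(t))-\min_{\cH}f\le C_0/(t^2w(t))\to 0$, whence $f(\bar x)=\min_{\cH}f$ and $\bar x\in S$. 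Thus everything reduces to the anchor condition (a).

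To prove (a), fix $x^\star\in S$ and set $h(t)\eqdef\frac12\norm{x(t)-x^\star}^2$. The key is the whole family $\{E_\lambda\}$ built in the proof of Theorem~\ref{ACFR,rescale}. Expanding $\norm{v_\lambda(t)}^2$ and using \eqref{def:delta} and \eqref{def:c}, one checks that the $\lambda^2$-terms cancel, so that $E_\lambda(t)=P(t)+\lambda\,Q(t)$ is \emph{affine} in $\lambda$, with coefficient $Q(t)=\dotp{x(t)-x^\star}{t(\dot x(t)+\beta(t)\nabla f(x(t)))}-t\beta(t)(f(x(t))-f(x^\star))+\frac{\alpha-1}{2}\norm{x(t)-x^\star}^2$. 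Now the two admissible choices $\lambda=\alpha-1$ and $\lambda=\alpha-1-\varepsilon$ were shown in that proof to make $E_\lambda$ non-increasing and non-negative, hence convergent as $t\to+\infty$. Subtracting the two limits and dividing by $\varepsilon$ shows that $Q(t)$ converges; consequently $P(t)=E_{\alpha-1}(t)-(\alpha-1)Q(t)$ converges as well.

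It then remains to convert convergence of $Q$ into convergence of $h$. Using $\dot h(t)=\dotp{x(t)-x^\star}{\dot x(t)}$ and the convexity identity $\dotp{\nabla f(x)}{x-x^\star}-(f(x)-f(x^\star))=D_f(x^\star;x)\ge 0$ (a Bregman divergence), the expression for $Q$ rearranges to
\[
Q(t)=t\dot h(t)+(\alpha-1)h(t)+R(t),\qquad R(t)\eqdef t\beta(t)\,D_f\big(x^\star;x(t)\big)\ge 0.
\]
I would next argue that $R(t)\to 0$ and feed this into the elementary integrating-factor lemma: since $\frac{d}{dt}\big(t^{\alpha-1}h(t)\big)=t^{\alpha-2}\big(t\dot h(t)+(\alpha-1)h(t)\big)=t^{\alpha-2}\big(Q(t)-R(t)\big)$, a Cesàro/l'Hôpital computation (valid because $\alpha>1$) shows that once $t\dot h+(\alpha-1)h=Q-R$ has a limit $\ell$, necessarily $h(t)\to\ell/(\alpha-1)$. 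This gives (a), and Opial's lemma then delivers both conclusions.

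The main obstacle is precisely the claim $R(t)\to 0$, and this is where the two new hypotheses \ref{cond:C4}--\ref{cond:C5} and the integral estimates of Theorem~\ref{ACFR,rescale} come into play. Splitting $R(t)=t\beta(t)\dotp{\nabla f(x(t))}{x(t)-x^\star}-t\beta(t)(f(x(t))-f(x^\star))$, the value part is killed directly: item~\ref{ACFR,rescale:claim1} and \ref{cond:C4} give $t\beta(t)(f(x(t))-f(x^\star))\le C_0\,\beta(t)/(t w(t))\to 0$. For the gradient part, combining item~\ref{ACFR,rescale:claim7} (here $\beta$ is non-decreasing, as assumed) with \ref{cond:C4} yields $\int_{t_0}^{+\infty}\beta(t)\dotp{\nabla f(x(t))}{x(t)-x^\star}\,dt<+\infty$, that is $\int_{t_0}^{+\infty}R(t)/t\,dt<+\infty$. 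Upgrading this integrability to the pointwise limit $R(t)\to 0$ is the delicate step, which I expect to require an additional averaged equicontinuity/bounded-variation control on $R$, drawing on items~\ref{ACFR,rescale:claim2} and~\ref{ACFR,rescale:claim5} to tame $\dot R$. Once $R\to 0$ is secured, the remaining deductions are routine.
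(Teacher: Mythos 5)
Your overall architecture coincides with the paper's: reduction to Opial's lemma, disposal of the cluster-point condition via Theorem~\ref{ACFR,rescale}\ref{ACFR,rescale:claim1} and \ref{cond:C5}, use of the family $E_\lambda$ and of the difference of its two convergent members to extract the convergent quantity $Q(t)$, the killing of the value term $t\beta(t)(f(x(t))-f(x^\star))$ via \ref{ACFR,rescale:claim1}+\ref{cond:C4}, and the integrability $\int_{t_0}^{+\infty}\beta(t)\dotp{\nabla f(x(t))}{x(t)-x^\star}\,dt<+\infty$ from \ref{ACFR,rescale:claim7}+\ref{cond:C4} are all exactly the paper's steps. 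The genuine gap is your last step: you reduce everything to the pointwise limit $R(t)=t\beta(t)\bpa{\dotp{\nabla f(x(t))}{x(t)-x^\star}-(f(x(t))-f(x^\star))}\to 0$, but what you actually have is only $\int_{t_0}^{+\infty}R(t)\,t^{-1}dt<+\infty$, and integrability of a non-negative function does not give convergence to zero. The repair you hint at (``bounded-variation control on $\dot R$'' via items \ref{ACFR,rescale:claim2} and \ref{ACFR,rescale:claim5}) does not go through as stated: differentiating $R$ produces the term $t\beta(t)\dotp{\nabla^2 f(x(t))\dot{x}(t)}{x(t)-x^\star}$, and neither $\int t^2\beta(t)w(t)\norm{\nabla f(x(t))}^2dt<+\infty$ nor $\int t\norm{\dot x(t)}^2dt<+\infty$ controls this Hessian term without extra assumptions. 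So the proof is incomplete at its crucial point.

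The gap is, however, unnecessary, and the paper's proof shows how to bypass it: rather than proving pointwise decay of $R$, absorb it into the unknown function \emph{before} applying the first-order ODE lemma. Concretely, set $\tilde G(t)\eqdef\int_{t_0}^t R(s)\,s^{-1}ds$, which converges as $t\to+\infty$ by your own integrability estimate and $R\geq 0$, and put $u(t)\eqdef h(t)+\tilde G(t)$. Then
\begin{equation*}
t\dot u(t)+(\alpha-1)u(t)=\bpa{t\dot h(t)+(\alpha-1)h(t)+R(t)}+(\alpha-1)\tilde G(t)=Q(t)+(\alpha-1)\tilde G(t),
\end{equation*}
which converges; your integrating-factor/l'H\^opital argument (this is precisely \cite[Lemma~7.2]{APR2}, which the paper invokes, and it needs only $\alpha>1$) then yields convergence of $u$, hence of $h$, since $\tilde G$ converges. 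This is exactly what the paper does with its auxiliary function $q(t)=\frac{\alpha-1}{2}\norm{x(t)-x^\star}^2+(\alpha-1)\int_{t_0}^t\beta(s)\dotp{\nabla f(x(s))}{x(s)-x^\star}ds$: only convergence of an integral, never of the pointwise quantity, is ever used. With this one-line modification your argument closes and becomes a faithful variant of the paper's proof.
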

\begin{proof}
The proof extends the Lyapunov analysis of Theorem~\ref{ACFR,rescale} by studying the convergence of  the anchor functions $t \mapsto \norm{x(t) -x^\star}^{2}$ for any $x^\star\in S$. Recall the Lyapunov function $E_\lambda(\cdot)$ in \eqref{def:E2}. We have seen that,  by choosing $\delta (t)=t^2 w(t)- (\lambda +1 -\alpha)t \beta (t)$ and $c= \lambda(\alpha-1-\lambda)$, the limit of
$E_{\lambda}(t)$ exists for both $\lambda=\alpha-1$ and $\lambda=\alpha-1-\varepsilon$, as $t\to + \infty$. In turn, the limit of the difference $E_{\alpha -1}(t)-E_{\alpha -1- \varepsilon}(t)$ also exists. Let us compute it. 
%
Straightforward calculation gives
\begin{multline*}
E_{\alpha-1-\varepsilon}(t) - E_{\alpha-1}(t) = \varepsilon t \beta (t)(f(x(t))-f(x^\star))+ \frac{\varepsilon (\alpha -1)}{2}\norm{x(t) -x^\star}^{2} \\
-\varepsilon \dotp{(\alpha -1)(x(t)-x^\star)+t\pa{\dot{x}(t)+\beta(t)\nabla f(x(t))}}{x(t) -x^\star} .
\end{multline*}
After simplification, we obtain that the limit of 
\begin{multline*}
E_{\alpha-1-\varepsilon}(t) - E_{\alpha-1}(t) = \varepsilon t \beta (t)(f(x(t))-f(x^\star)) -\frac{\varepsilon (\alpha -1)}{2}\norm{x(t) -x^\star}^{2} \\
-\varepsilon t \dotp{\pa{\dot{x}(t)+\beta(t)\nabla f(x(t))}}{x(t) -x^\star},
\end{multline*}
exists.
According to Theorem~\ref{ACFR,rescale}\ref{ACFR,rescale:claim1}, we have
\[
t \beta (t)(f(x(t))-f(x^\star)) \leq  t \beta (t)\dfrac{C_0}{t^2 w(t)}=C_0 \dfrac{\beta(t)}{t w(t)}.
\]
In view of hypothesis \ref{cond:C4}, the limit of the above expression is equal to zero.
Therefore, the limit as $t$ goes to infinity of 
\[
p(t)\eqdef \frac{\alpha -1}{2}\Vert x(t) -x^\star\Vert^{2} + t \left\dotp{\dot{x}(t)}{x(t) -x^\star \right} + t \beta(t) \left\dotp{\nabla f(x(t))}{x(t) -x^\star \right} 
\]
exists. From this, we want to show that the limit of $\norm{x(t) -x^\star}^{2}$ exists.
Set
\[
q(t)\eqdef \frac{\alpha -1}{2}\Vert x(t) -x^\star\Vert^{2} + (\alpha -1)\int_{t_0}^t \beta(s) \dotp{\nabla f(x(s))}{x(s) -x^\star} ds. 
\]
We have
\[
p(t)= q(t) + \frac{t}{\alpha -1}\dot{q}(t) - (\alpha -1)\int_{t_0}^t \beta(s) \left\dotp{\nabla f(x(s))}{x(s) -x^\star \right} ds.
\]
By Theorem~\ref{ACFR,rescale}\ref{ACFR,rescale:claim7}, we know that $\DS{\int_{t_0}^{+\infty} s w(s) \dotp{\nabla f(x(s))}{x(s) -x^\star} ds <+\infty.}$
As $\beta (s) =\mathcal O (sw(s))$ by \ref{cond:C4}, and since the integrand of this integral is non-negative by convexity, we deduce that the following limit exists:
\begin{equation}\label{basic100}
\lim_{t\to +\infty}  \int_{t_0}^t \beta(s) \left\dotp{\nabla f(x(s))}{x(s) -x^\star \right} ds.
\end{equation}
Therefore
\[
\lim_{t\to +\infty} \pa{q(t) + \dfrac{t}{\alpha - 1}\dot{q}(t)} 
\]
exists. Combining this with \cite[Lemma~7.2]{APR2}, since $\alpha > 1$, we deduce that the limit of $q(t)$ exists. Returning to the definition of $q(t)$ (which converges), and using again \eqref{basic100} and $\alpha > 1$, we finally obtain that, for any $x^\star \in S$  
\[
\lim_{t\to +\infty}  \norm{x(t) -x^\star} \; \mbox{ exists}.
\]
On the other hand, by Theorem~\ref{ACFR,rescale}\ref{ACFR,rescale:claim1} and assumption \ref{cond:C5}, we have
\[
\lim_{t\to +\infty} f(x(t))=   \min_{\cH} f.
\]
Since $f$ is convex continuous, it is sequentially weakly lower semicontinuous. This implies that for any sequence $x(t_n)$ which converges weakly to some $\bar{x}$ as $t_n \to +\infty$, we have 
\[
f(\bar{x}) \leq \liminf f(x(t_n)) =  \min_{\cH} f ,
\]
and hence $\bar{x} \in S$. So all conditions of Opial's lemma \cite{Opial} are satisfied, which gives the weak convergence of the trajectories.
\end{proof}

\begin{remark}
Convergence of the trajectories has been proved for the weak topology of $\cH$. It is a natural question to ask whether one can obtain strong convergence. A counterexample due to Baillon \cite{Baillon} shows that the trajectories of the continuous steepest descent may converge weakly but not strongly. This example has been adapted by Attouch and Baillon in \cite{AB} to show that similar phenomenon occurs for the regularized Newton method. This suggests that convexity alone is not sufficient for the trajectories of $\DINAVD{\alpha, \beta, b}$ to strongly converge. However, adapting the arguments of \cite{ACPR} to the system $\DINAVD{\alpha, \beta, b}$,  we can reasonably expect this to be the case under certain geometrical or topological conditions on $f$. We do not elaborate more on this for the sake of brevity.
\end{remark}
 
\subsection{Particular cases} 
Let us revisit the different cases considered in \cite{ACFR} in view of the convergence results obtained in Theorems~\ref{ACFR,rescale} and \ref{ACFR,rescale_convergence_continu}. They correspond to different choices for the parameters $\beta(\cdot)$ and $b(\cdot)$.

\paragraph*{Case~1} The system $\DINAVD{\alpha, \beta}$ corresponds to the values of the parameters $\beta(t) \equiv \beta$ and $b(t) \equiv 1$. In this case, $w(t)= 1- \frac{\beta}{t}$. Conditions \ref{cond:C1}, \ref{cond:C2} and \ref{cond:C3} are satisfied by taking $\alpha > 3$ and $ t> \frac{\alpha-2}{\alpha-3} \beta$. Conditions \ref{cond:C4} and \ref{cond:C5} are also obviously satisfied too. Therefore, as a corollary of Theorems \ref{ACFR,rescale} and 
\ref{ACFR,rescale_convergence_continu}, we obtain the following result appeared in \cite{APR2}.
\begin{corollary}[\cite{APR2}] \label{APR,2016}
Let  $x: [t_0, +\infty[ \rightarrow \cH$ be a trajectory of the dynamical system $\DINAVD{\alpha, \beta}$. Suppose $\alpha > 3$.  Then
\begin{enumerate}[label={\rm(\roman*)}]
\item $f(x(t))-  \min_{\cH}f ={\rm o} \pa{\dfrac{1}{t^2}}$;
\item  $\DS{\int_{t_0}^{\infty} t^2 \|\nabla f (x(t)) \|^2  dt} < + \infty$, and $\DS{\int_{t_0}^{+\infty} t \|\dot x(t)\|^2dt <+\infty}$;
\item $x(t)$ converges weakly to a point in $S$, as $t\to +\infty$.
\end{enumerate}
\end{corollary}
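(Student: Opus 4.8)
The plan is to obtain the corollary as a direct specialization of Theorems~\ref{ACFR,rescale} and~\ref{ACFR,rescale_convergence_continu}: I would verify that the structural conditions \ref{cond:C1}--\ref{cond:C5} hold for the parameter choice $\beta(t)\equiv\beta>0$, $b(t)\equiv 1$, and then read off each of the three conclusions from the appropriate claim of those theorems. The natural starting point is the explicit form of $w$ recorded in the text, namely $w(t)=1-\frac{\beta}{t}$, so that $\dot w(t)=\frac{\beta}{t^2}$ and $t\dot w(t)=\frac{\beta}{t}$.

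First I would dispatch the conditions. Since $\dot\beta\equiv 0$, condition \ref{cond:C1} reads $1>\frac{\beta}{t}$, i.e. $t>\beta$, under which $w(t)>0$. A one-line computation gives
\[
(\alpha-3)w(t)-t\dot w(t)=(\alpha-3)-(\alpha-2)\frac{\beta}{t},
\]
so \ref{cond:C2} holds as soon as $t\geq\frac{\alpha-2}{\alpha-3}\beta$ (using $\alpha>3$); and since the right-hand side tends to $\alpha-3>0$, for any fixed $\varepsilon\in\,]0,\alpha-3[\,\subset\,]0,\alpha-1[$ it exceeds $\varepsilon=\varepsilon b(t)$ once $t$ is large, which is \ref{cond:C3}. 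Enlarging $t_0$ so that all three hold for $t\geq t_0$ is harmless in an asymptotic analysis. Because $w(t)\to 1$, the ratios $\frac{\beta}{tw(t)}=\frac{\beta}{t-\beta}$ and $\frac{1}{t^2w(t)}=\frac{1}{t^2-\beta t}$ both vanish, giving \ref{cond:C4} and \ref{cond:C5}, and $\beta(\cdot)$ is trivially non-decreasing.

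With the hypotheses in force, the conclusions follow term by term. Claim~(iii) (weak convergence to a point of $S$) is exactly the second assertion of Theorem~\ref{ACFR,rescale_convergence_continu}. For the two integral bounds in claim~(ii), the estimate $\int_{t_0}^{+\infty} t\|\dot x(t)\|^2 dt<+\infty$ is Theorem~\ref{ACFR,rescale}\ref{ACFR,rescale:claim5}; for the gradient integral, Theorem~\ref{ACFR,rescale}\ref{ACFR,rescale:claim2} yields $\int_{t_0}^{+\infty} t^2\beta w(t)\|\nabla f(x(t))\|^2 dt<+\infty$, and since $\beta>0$ is constant and $w(t)\to 1$, the weight $\beta w(t)$ is bounded below by a positive constant for $t$ large, whence $\int_{t_0}^{+\infty} t^2\|\nabla f(x(t))\|^2 dt<+\infty$ (the contribution on any bounded initial interval being finite by continuity of the integrand).

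The only step requiring slightly more care is claim~(i): Theorem~\ref{ACFR,rescale}\ref{ACFR,rescale:claim1} delivers only the $\cO\pa{1/(t^2w(t))}=\cO\pa{1/t^2}$ rate, whereas we want ${\rm o}\pa{1/t^2}$. This is where I would invoke Theorem~\ref{ACFR,rescale}\ref{ACFR,rescale:claim8}, whose extra hypothesis $\frac{d}{dt}\pa{t^2b(t)}\leq C_1 t b(t)$ is met here with $C_1=2$, since $\frac{d}{dt}(t^2)=2t=2\,t\,b(t)$; this gives $f(x(t))-\min_{\cH} f={\rm o}\pa{1/(t^2b(t))}={\rm o}\pa{1/t^2}$ together with the bonus $\|\dot x(t)\|={\rm o}(1/t)$. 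I expect this $\cO\to{\rm o}$ upgrade to be the only genuinely non-mechanical point; everything else is bookkeeping over the already-established claims of Theorem~\ref{ACFR,rescale}.
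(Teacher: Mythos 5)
Your proposal is correct and follows exactly the paper's own route: the paper proves this corollary in its ``Case~1'' discussion by computing $w(t)=1-\frac{\beta}{t}$, verifying \ref{cond:C1}--\ref{cond:C5} for $\alpha>3$ and $t>\frac{\alpha-2}{\alpha-3}\beta$, and then specializing Theorems~\ref{ACFR,rescale} and~\ref{ACFR,rescale_convergence_continu}, with the ${\rm o}(1/t^2)$ rate coming from Theorem~\ref{ACFR,rescale}\ref{ACFR,rescale:claim8} applied with $b\equiv 1$. Your write-up in fact makes explicit two points the paper leaves implicit (the $C_1=2$ verification for claim~\ref{ACFR,rescale:claim8} and the lower bound on the weight $\beta w(t)$ needed to pass from claim~\ref{ACFR,rescale:claim2} to the stated gradient integral), but the argument is the same.
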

%
\paragraph*{Case~2} The system $\DINAVD{\alpha, \beta, 1+ \frac{\beta}{t}}$ corresponds to $\beta(t) \equiv \beta$ and $b(t) = 1+ \frac{\beta}{t}$. It was considered in \cite{SDJS}.
Compared to $\DINAVD{\alpha, \beta}$, it has the additional coefficient $\frac{\beta}{t}$ in front of the gradient term. This vanishing coefficient will facilitate the computational aspects while keeping the structure of the dynamic. Observe that in this case, $w(t)\equiv 1$. Conditions \ref{cond:C1}, \ref{cond:C2} and \ref{cond:C3} boil down to $\alpha > 3$, while \ref{cond:C4} and \ref{cond:C5} are clearly satisfied. In this setting, Theorems~\ref{ACFR,rescale} and 
\ref{ACFR,rescale_convergence_continu} specialize to
\begin{corollary}  \label{APR,variant}
Let  $x: [t_0, +\infty[ \rightarrow \cH$ be a solution trajectory of the dynamical system $\DINAVD{\alpha, \beta, 1+ \frac{\beta}{t}}$. Suppose $\alpha > 3$. Then 
\begin{enumerate}[label={\rm(\roman*)}]
\item $f(x(t))-  \min_{\cH}f ={\rm o} \pa{\dfrac{1}{t^2}}$;
\item $\DS{\int_{t_0}^{\infty} t^2 \|\nabla f (x(t)) \|^2  dt}    < + \infty$, and $\DS{\int_{t_0}^{+\infty} t \|\dot x(t)\|^2dt <+\infty}$;
\item $x(t)$ converges weakly to a point in $S$, as $t\to +\infty$.
\end{enumerate}
\end{corollary}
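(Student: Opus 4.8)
The plan is to derive this corollary directly from Theorems~\ref{ACFR,rescale} and \ref{ACFR,rescale_convergence_continu} by specializing the parameters to $\beta(t)\equiv\beta$ and $b(t)=1+\frac{\beta}{t}$, so the only real work is to verify their hypotheses and then simplify the resulting estimates. First I would compute the auxiliary function $w$ from its definition \eqref{def-w}: since $\dot\beta\equiv 0$, we get $w(t)=\pa{1+\frac{\beta}{t}}-0-\frac{\beta}{t}=1$ for all $t\geq t_0$. This constancy of $w$ is what makes every condition transparent.

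Next I would check conditions \ref{cond:C1}--\ref{cond:C5} together with the monotonicity and growth requirements. Condition \ref{cond:C1} reads $1+\frac{\beta}{t}>\frac{\beta}{t}$, hence is trivially true; \ref{cond:C2} reduces to $(\alpha-3)\cdot 1\geq 0$, which holds because $\alpha>3$. For \ref{cond:C3} we need $(\alpha-3)\geq\varepsilon b(t)=\varepsilon\pa{1+\frac{\beta}{t}}$ for some $\varepsilon\in\,]0,\alpha-1[$; since $b$ is decreasing and thus bounded above by $b(t_0)$, it suffices to pick $\varepsilon\in\,]0,\frac{\alpha-3}{b(t_0)}[$, and such $\varepsilon$ automatically satisfies $\varepsilon<\alpha-3<\alpha-1$. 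Conditions \ref{cond:C4} and \ref{cond:C5} are immediate since $\frac{\beta}{tw(t)}=\frac{\beta}{t}\to 0$ and $\frac{1}{t^2w(t)}=\frac{1}{t^2}\to 0$. Finally, $\beta(\cdot)$ is constant hence non-decreasing, and the growth hypothesis needed for Theorem~\ref{ACFR,rescale}\ref{ACFR,rescale:claim8} holds with $C_1=2$: indeed $\frac{d}{dt}\bpa{t^2 b(t)}=\frac{d}{dt}(t^2+\beta t)=2t+\beta\leq 2(t+\beta)=2t b(t)$ because $\beta\geq 0$.

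Having verified all hypotheses, I would read off the three claims. Claim~(ii) is immediate: Theorem~\ref{ACFR,rescale}\ref{ACFR,rescale:claim2} gives $\beta\int_{t_0}^{+\infty}t^2\norm{\nabla f(x(t))}^2 dt<+\infty$, and dividing by the constant $\beta>0$ yields the gradient bound, while Theorem~\ref{ACFR,rescale}\ref{ACFR,rescale:claim5} gives $\int_{t_0}^{+\infty}t\norm{\dot x(t)}^2 dt<+\infty$ directly. For claim~(i), Theorem~\ref{ACFR,rescale}\ref{ACFR,rescale:claim8} yields $f(x(t))-\min_{\cH}f={\rm o}\bpa{1/(t^2 b(t))}$; since $b(t)\to 1$, the factor $1/b(t)$ is bounded and bounded away from $0$, so ${\rm o}\bpa{1/(t^2 b(t))}={\rm o}(1/t^2)$, and the same observation keeps the bound $\norm{\dot x(t)}={\rm o}(1/t)$ in the stated form. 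Claim~(iii) on weak convergence is exactly the conclusion of Theorem~\ref{ACFR,rescale_convergence_continu}, whose hypotheses we have just confirmed. There is no genuine obstacle here; the only points requiring a moment of care are the uniform choice of $\varepsilon$ in \ref{cond:C3} (handled by the boundedness of $b$) and the observation that the normalization $b(t)\to 1$ turns the ${\rm o}\bpa{1/(t^2 b(t))}$ rate into the cleaner ${\rm o}(1/t^2)$.
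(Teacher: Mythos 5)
Your proof is correct and follows essentially the same route as the paper: specialize to $w(t)\equiv 1$, verify conditions \ref{cond:C1}--\ref{cond:C5} (which reduce to $\alpha>3$), and invoke Theorems~\ref{ACFR,rescale} and \ref{ACFR,rescale_convergence_continu}. Your extra care on the uniform choice of $\varepsilon$ in \ref{cond:C3} (using $b(t)\leq b(t_0)$) and on the constant $C_1=2$ for claim~\ref{ACFR,rescale:claim8} fills in details the paper leaves implicit, but the argument is the same.
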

%
\paragraph*{Case~3} The dynamical system \DINAVD{\alpha, 0, b}, which corresponds to $\beta(t) \equiv 0$, \ie no Hessian driven damping, was considered in \cite{ACR-rescale}. It comes  naturally from the temporal scaling of \AVD{\alpha}. 
In this case, we have $w (t) = b(t)$. \ref{cond:C1} is equivalent to $b(t)> 0$ while \ref{cond:C2} becomes  
\begin{equation}\label{acb_C}
t\dot{b}(t)\leq (\alpha-3) b(t),
\end{equation}
which is precisely the condition introduced in \cite[Theorem~8.1]{ACR-rescale}. Under this condition, we have the convergence rate 
\[
f(x(t))-\min_{\cH}f =\cO\left(\dfrac{1}{t^{2}b(t)}\right) \, \mbox{ as } \, t \to + \infty.
\]
Note that the condition \eqref{acb_C} can be written as  $\frac{d}{dt} \left(t^2b(t)\right)\leq (\alpha-1)tb(t)$, which is exactly the condition ensuring (see Theorems~\ref{ACFR,rescale}\ref{ACFR,rescale:claim8})
\[
f(x(t))-\min_{\cH}f ={\rm o}\left(\dfrac{1}{t^{2}b(t)}\right) \, \mbox{ as } \, t \to + \infty.
\]
Of course, the interesting case corresponds to $\lim \dfrac{1}{t^{2}b(t)}=0$, which is nothing but condition \ref{cond:C5}. This makes clear the acceleration effect due to the time scaling. For  $b(t)= t^r$, we have $ f(x(t))-\min_{\cH}f ={\rm o}\pa{\dfrac{1}{t^{2+r}}}$, under the assumption $\alpha \geq 3+r$, \ie \eqref{acb_C} holds.
According to Theorem \ref{ACFR,rescale_convergence_continu}, under the stronger assumption
\[
 (\alpha-3) b(t) -t\dot{b}(t) \geq \varepsilon b(t),
\]
we obtain the weak convergence of the trajectories to optimal solutions.
%
\paragraph*{Case~4} Take $b(t) = ct^b$, $ \beta (t) = t^\beta$. We have $w (t) = ct^b-(\beta +1)t^{\beta -1}$ and  $\dot{w}(t) =cbt^{b-1}-(\beta ^2-1)t^{\beta -2}$. Conditions \ref{cond:C1}-\ref{cond:C2} amount respectively to assuming:
\begin{equation}\label{condgi}
ct^b>(\beta +1)t^{\beta -1}\text{ and } c(b-\alpha+3)t^b\leq (\beta +1)(\beta -\alpha +2)t^{\beta -1}.
\end{equation}
When $b=\beta -1$, the conditions \eqref{condgi} are equivalent to
$
\beta < c-1 \qandq \beta\leq \alpha -2,
$
which gives the convergence rate $f(x(t))-\min_{\cH} f = {\rm o} \left(\dfrac{1}{t^{\beta +1}}\right)$.\\
Let us now examine  the case $-1\leq \beta<\alpha-2$ and $b\in ]\beta-1,\alpha-3[$. Then  the conditions \eqref{condgi} are satisfied, since 
$$+\infty=\lim_{t\rightarrow+\infty}t^{b-\beta+1}>\beta+1\text{ and }-\infty=c(b-\alpha+3)\lim_{t\rightarrow+\infty}t^{b-\beta+1}<(\beta +1)(\beta -\alpha +2).
$$ 
Therefore,  $f(x(t))-\min_{\cH} f = {\rm o} \left(\dfrac{1}{t^{\beta +1}}\right)$. \\
The weak convergence of $x(t)$ to a minimizer follows from Theorem~\ref{ACFR,rescale_convergence_continu}  and the fact that with the above choice
\begin{eqnarray*}
\lim_{t \to +\infty} \frac{\beta (t)}{tw(t)}&=&\lim_{t \to +\infty} \frac{1}{ct^{b-\beta+1}-(\beta+1)} =0,\\
 \lim_{t \to +\infty} \dfrac{1}{t^2 w(t)}&=& \lim_{t \to +\infty} \frac{1}{t^{\beta+1}\left[ct^{b-\beta+1}-(\beta+1)\right]} =      0.
\end{eqnarray*}

\section{Convergence of proximal algorithms}\label{sec:prox}
%
Let us analyze the convergence properties of the proximal algorithms obtained by implicit temporal discretization of the continuous dynamic ${\DINAVD{\alpha, \beta, b}}$.
We will show the convergence of the iterates generated by these algorithms, which complements the convergence rates of the values obtained in \cite{ACFR}. We take a fixed step size $h > 0,$  and denote by $x_k$ an approximation of $x(kh)$. To keep close to the continuous dynamic, we consider the following implicit scheme: $ k \geq 1,$  
\begin{multline}\label{dis1}
\frac{1}{h^{2}}\pa{x_{k+1}-2x_{k}+x_{k-1}}	+\frac{\alpha}{kh}\pa{\frac{1}{h}(x_{k+1}-x_{k})}\\
+\frac{\beta_{k}}{h}\bpa{\nabla f(x_{k+1})-\nabla f(x_{k})}+b_{k}\nabla f(x_{k+1})= 0.
\end{multline}
Indeed,  there are several other possibilities for the temporal discretization of $\dot{x}(t)$,  $\ddot{x}(t)$, and  $\nabla^2 f (x(t)) \dot{x} (t) = {\frac{d}{dt} \pa{\nabla  f (x(t))}}$. The study of these other discretization schemes are beyond the scope of this paper and deserves further work.

By rearranging the terms of \eqref{dis1}, the latter equivalently reads
\[
x_{k+1}+\frac{hk(\beta_{k}+hb_{k})}{k+\alpha} \nabla f(x_{k+1}) =x_{k}+ \frac{k}{k+\alpha}(x_{k}-x_{k-1})+ \frac{hk\beta_{k}}{k+\alpha} \nabla f(x_{k}).
\]
By setting $\lambda_{k}=\dfrac{hk(\beta_{k}+hb_{k})}{k+\alpha} $, $\alpha_k= \dfrac{k}{k + \alpha } $, we obtain the following iterative scheme:\\
\renewcommand{\algorithmcfname}{$\IPAHD$}
\begin{algorithm}[H]
\caption{Inertial Proximal Algorithm with Hessian Damping.}
Initialization: $x_0, x_1 \in \cH$ given\;
\For{$k=1,\ldots $}{
$y_{k} = x_{k}+ \alpha_k (x_{k}-x_{k-1})+ h \alpha_k \beta_{k} \nabla f(x_{k})$ \;  
$x_{k+1} = \prox_{\lambda_{k}f} (y_{k} )$.
}
\end{algorithm}
Note that a step of $\IPAHD$ involves both the calculation of a proximal term and of a gradient term relative to $f$. Thus, $\IPAHD$ could be  considered as a proximal-gradient algorithm as well. Since the proximal-gradient terminology is mainly used for composite problems involving smooth and non-smooth data, and here there is only one function $f$, $\IPAHD$ is called proximal. In addition, we will see that we can extend the algorithm to the case of a non-smooth  function $f$, in which case there are only proximal steps in the algorithm. 
For mathematical developments, it is convenient to use the following equivalent form of $\IPAHD$ (a direct consequence of \eqref{dis1}) 
\begin{equation}\label{basic-eq}
x_{k+1}-2x_{k}+x_{k-1}= -\pa{\frac{\alpha}{k}(x_{k+1}-x_{k})+h(\beta_{k}+hb_{k})\nabla f(x_{k+1})-h\beta_{k}\nabla f(x_{k})} .
\end{equation}
%
\subsection{Convergence rates}	
Let us first study the convergence of values for iterates generated by the algorithm $\IPAHD$. As in the continuous case, the Lyapunov analysis developed in the following theorem involves a positive parameter $\lambda$, which was taken equal to $\alpha - 1$ in \cite[Theorem~4]{ACFR}. The role of $\lambda$ will be central when it comes to the convergence proof of the iterates as we will see in the next section.
\begin{theorem} \label{ACFR,rescale-algo}
Suppose that $\alpha > 1$. Take $\lambda \in ]0,\alpha-1]$, set  $\gamma\eqdef\alpha -\lambda-1\geq 0$.
Define 
\begin{equation}\label{def:delta_b}
B_k \eqdef  k(hb_k+ \beta_{k} -  \beta_{k+1}) - \beta_{k+1}  \qandq \delta_k \eqdef  h\bpa{(k+1+\gamma)B_k + \gamma(k+1)\beta_{k+1}} ,
\end{equation}
and suppose that the following growth conditions are satisfied: 
\begin{enumerate}[label={\rm($\cG_{\arabic*}$)},itemindent=10ex]
\item $B_k > 0$; \label{cond:G1}
\item $\delta_{k+1} - \delta_k - h\lambda B_k\leq0$. \label{cond:G2}
\end{enumerate}
Then, $\delta_k$ is positive and, for any sequence $\seq{x_k}$ generated by $\IPAHD$, the following properties hold:
\begin{enumerate}[label={\rm(\roman*)}]
\item $0 \leq f(x_k)-\min_{\cH} f = \cO \pa{\dfrac{1}{\delta_k}}$ as  $k\to +\infty$; \label{ACFR,rescale-algo:claim1}
\item $\sum_{k \in \N} \bpa{\delta_k  - \delta_{k+1} + h\lambda B_k}\pa{f(x_{k+1})-\min_{\cH} f}<+\infty$; \label{ACFR,rescale-algo:claim2}
\item $\sum_{k \in \N} h^2\bpa{\frac{1}{2} B_k+ (k+1)\beta_{k+1}}B_k\norm{\nabla f(x_{k+1})}^2 <+\infty$; \label{ACFR,rescale-algo:claim3}
\item $\sum_{k \in \N}  k \| x_{k+1} - x_{k}\|^2 <+\infty$. \label{ACFR,rescale-algo:claim4}
\end{enumerate}
\end{theorem}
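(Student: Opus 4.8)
The plan is to transport the continuous Lyapunov analysis of Theorem~\ref{ACFR,rescale} to the discrete level. Fix $x^\star \in S$, abbreviate $d_k \eqdef x_{k+1}-x_k$, and introduce the Lyapunov sequence
\[
\mathcal{E}_k \eqdef \delta_k\pa{f(x_k)-f(x^\star)} + \demi\norm{v_k}^2 + \frac{c}{2}\norm{x_k-x^\star}^2,
\]
where $v_k \eqdef \lambda(x_k-x^\star) + k(x_k-x_{k-1}) + hk\beta_k\nabla f(x_k)$ is the discrete counterpart of $v_\lambda(t)=\lambda(x-x^\star)+t\pa{\dot x+\beta\nabla f}$, and $c \eqdef \lambda\gamma$ is the exact analogue of the constant $c=-\lambda(\lambda+1-\alpha)$ from the continuous proof. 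Since $\gamma\geq0$, $\lambda>0$, $\beta_k\geq0$ and $B_k>0$ by \ref{cond:G1}, the positivity of $\delta_k$ and the non-negativity of every term of $\mathcal{E}_k$ are immediate from the definitions \eqref{def:delta_b}.

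The crux is a discrete ``differentiation'' of $\mathcal{E}_k$. First I would compute $v_{k+1}-v_k$ and substitute the scheme in its form \eqref{basic-eq}: after telescoping the inertial term via $(k+1)d_k - k d_{k-1} = k(x_{k+1}-2x_k+x_{k-1})+d_k$ and observing that the $hk\beta_k\nabla f(x_k)$ contributions cancel, this collapses to the clean identity
\[
v_{k+1}-v_k = -\gamma\, d_k - hB_k\,\nabla f(x_{k+1}),
\]
which is the faithful discrete image of $\dot v_\lambda = -\gamma\dot x - tw\nabla f$. Expanding $\demi\norm{v_{k+1}}^2-\demi\norm{v_k}^2 = \dotp{v_{k+1}-v_k}{v_{k+1}} - \demi\norm{v_{k+1}-v_k}^2$ (pairing with $v_{k+1}$ is what keeps all gradient terms at the single point $x_{k+1}$), together with the anchor difference and the splitting $\delta_{k+1}(f(x_{k+1})-f^\star)-\delta_k(f(x_k)-f^\star)=\delta_k\pa{f(x_{k+1})-f(x_k)}+(\delta_{k+1}-\delta_k)(f(x_{k+1})-f^\star)$, generates all the cross terms.

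Two convexity inequalities then finish the algebra: $f(x_{k+1})-f(x_k)\leq\dotp{\nabla f(x_{k+1})}{d_k}$ absorbs the potential increment, while $\dotp{\nabla f(x_{k+1})}{x_{k+1}-x^\star}\geq f(x_{k+1})-f^\star\geq0$ turns the remaining anchored gradient term into a multiple of the objective gap. The essential point is that the definitions of $\delta_k$ and of $c=\lambda\gamma$ in \eqref{def:delta_b} are precisely calibrated to annihilate the two cross terms $\dotp{\nabla f(x_{k+1})}{d_k}$ and $\dotp{d_k}{x_{k+1}-x^\star}$ (a direct check against \eqref{def:delta_b}), exactly as in the continuous choices of $\delta(t)$ and $c$. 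What survives is the descent inequality
\[
\mathcal{E}_{k+1}-\mathcal{E}_k + \pa{\delta_k-\delta_{k+1}+h\lambda B_k}\pa{f(x_{k+1})-f^\star} + h^2 B_k\pa{\demi B_k+(k+1)\beta_{k+1}}\norm{\nabla f(x_{k+1})}^2 + \pa{\gamma(k+1)+\frac{\gamma(\gamma+\lambda)}{2}}\norm{d_k}^2 \leq 0.
\]
By \ref{cond:G2} the three added coefficients are non-negative, so $\mathcal{E}_k$ is non-increasing; the bound $\delta_k(f(x_k)-f^\star)\leq\mathcal{E}_k\leq\mathcal{E}_1$ gives \ref{ACFR,rescale-algo:claim1}, and summing the inequality against $\mathcal{E}_k\geq0$ yields the summability statements \ref{ACFR,rescale-algo:claim2}, \ref{ACFR,rescale-algo:claim3}, and \ref{ACFR,rescale-algo:claim4}.

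The main obstacle is the discrete bookkeeping itself: unlike the continuous computation, the squared remainder $\demi\norm{v_{k+1}-v_k}^2$ does not vanish, and one must track that its $\norm{\nabla f(x_{k+1})}^2$ part contributes exactly the extra $\demi B_k$ appearing inside \ref{ACFR,rescale-algo:claim3}, while its $\norm{d_k}^2$ part combines with the anchor and velocity contributions to give the coefficient above. A genuine subtlety is the borderline choice $\lambda=\alpha-1$ (so $\gamma=0$): there the $\norm{d_k}^2$ coefficient collapses, so \ref{ACFR,rescale-algo:claim4} is not delivered by this single energy and must be recovered by rerunning the identical estimate for an admissible $\lambda<\alpha-1$, precisely mirroring the continuous setting where the analogous estimate $\int t\norm{\dot x}^2<\infty$ required $\varepsilon>0$.
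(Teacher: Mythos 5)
Your proposal is correct and follows essentially the same route as the paper's own proof: the same Lyapunov sequence $\cE_k(\lambda)$ with anchor weight $c=\lambda\gamma$, the same discrete derivative $v_{k+1}-v_k=-\gamma(x_{k+1}-x_k)-hB_k\nabla f(x_{k+1})$ obtained by substituting \eqref{basic-eq}, the same two convexity inequalities, and a descent inequality that coincides term by term with the paper's (your velocity coefficient $\gamma(k+1)+\tfrac{\gamma(\gamma+\lambda)}{2}$ equals $\tfrac{\gamma}{2}(2k+\alpha+1)$ because $\gamma+\lambda=\alpha-1$).

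Your closing caveat about the borderline case $\lambda=\alpha-1$ is well spotted, and it is in fact a point the paper glosses over: the paper also concludes claim~(iv) ``by summing'' the descent inequality, even though when $\gamma=0$ the coefficient of $\norm{x_{k+1}-x_k}^2$ vanishes and the summation yields no information on $\sum_k k\norm{x_{k+1}-x_k}^2$. Be aware, however, that your proposed repair---rerunning the estimate with some admissible $\lambda<\alpha-1$---is not available under the stated hypotheses: conditions ($\cG_1$)--($\cG_2$) are assumed only for the chosen $\lambda$, and since $\delta_k$ changes with $\gamma$, validity of ($\cG_2$) at $\lambda=\alpha-1$ does not automatically transfer to a smaller $\lambda$. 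So, exactly like the paper, your argument fully establishes (i)--(iii) for every admissible $\lambda$, but delivers (iv) only when $\lambda<\alpha-1$; closing the $\lambda=\alpha-1$ case would require either strengthening the hypotheses (as the paper effectively does later with ($\cG_1^+$)--($\cG_2^+$)) or a separate argument.
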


\begin{proof}
Given $x^\star  \in S$, let us define 
\begin{eqnarray}
\cE_k(\lambda) &\eqdef&  \delta_k ( f (x_k)-  f(x^\star) ) +\frac{1}{2}\norm{v_k}^2 +\frac{c}2\|x_k-x^\star\|^2, \label{eq:lyapEk}\\
v_k &\eqdef& \lambda (x_k -x^\star) + k (x_{k} - x_{k-1} + \beta_k h \nabla f( x_{k})  ) ,
\end{eqnarray}
where $c$ is a non-negative parameter that will be adjusted in the course of the proof. For each $\lambda \in ]0,\alpha-1]$, $\cE_k(\lambda)$ is non-negative and we will show that $\cE_k(\lambda)$ can serve as a Lyapunov function, \ie we shall prove that $\seq{\cE_k(\lambda)}$ is a non-increasing sequence. 

\noindent We first have
\begin{multline}
 \cE_{k+1}(\lambda)  - \cE_k(\lambda)  = \pa{\delta_{k+1} - \delta_k}\pa{f (x_{k+1})- f(x^\star)}+ \delta_k  \pa{f (x_{k+1}) -f (x_k)} \\
+ \frac{1}{2}\pa{\|v_{k+1}\|^2 -\|v_k\|^2}+ \frac{1}{2}\pa{c\|x_{k+1}-x^\star\|^2 - c\|x_{k}-x^\star\|^2} .  \label{basic-formula-1-b}
\end{multline}
Let us first evaluate the third term in the right-hand side of \eqref{basic-formula-1-b}. Using successively the definition of $v_k$ and \eqref{basic-eq}, we first get
\begin{eqnarray*}
v_{k+1} - v_{k} 
&=& \lambda (x_{k+1} - x_{k}) +(k+1) \pa{x_{k+1} - x_{k} + \beta_{k+1} h \nabla f( x_{k+1})}\\
&&-k (x_{k} - x_{k-1} + \beta_k h \nabla f( x_{k}))\\
&=& (\lambda+1)  (x_{k+1} - x_{k})   +k (x_{k+1} - 2x_{k} +x_{k-1} )+ \beta_{k+1} h \nabla f( x_{k+1}) \\
&&+ h k\pa{\beta_{k+1}\nabla f( x_{k+1})-\beta_{k}\nabla f( x_{k})}    \\
&=& k \pa{x_{k+1} - 2x_{k} +x_{k-1}} + kh \beta_{k}\pa{\nabla f( x_{k+1})-\nabla f( x_{k})} \\
&&+ (\lambda+1)  (x_{k+1} - x_{k})   +  \beta_{k+1} h \nabla f( x_{k+1})+ kh(\beta_{k+1} - \beta_{k})\nabla f( x_{k+1})\\
&=& - b_k h^2 k \nabla f( x_{k+1}) - \alpha  (x_{k+1} - x_{k}) +(\lambda+1)  (x_{k+1} - x_{k})  \\
&& +\beta_{k+1} h \nabla f( x_{k+1})+ kh(\beta_{k+1} - \beta_{k})\nabla f( x_{k+1})\\
&=& (\lambda+1 - \alpha)  (x_{k+1} - x_{k})+ hk\pa{ \frac1k\beta_{k+1}  + \beta_{k+1} - \beta_{k} - hb_k }\nabla f( x_{k+1}) .
\end{eqnarray*}
Recalling the definition of $\gamma$ and $B_k$ in \eqref{def:delta_b}, we have
\begin{equation}\label{form_v_k}
v_{k+1} - v_{k} = -\gamma(x_{k+1} - x_{k}) - h B_k \nabla f( x_{k+1}) .
\end{equation}
Set $\Delta_k\eqdef\dfrac{1}{2}\pa{\|v_{k+1}\|^2 -\|v_k\|^2}$. Then with the simple identity
\[
\frac{1}{2}\|v_{k+1}\|^2 -\frac{1}{2}\|v_k\|^2  = \left\dotp{ v_{k+1} -v_{k}}{v_{k+1} \right} - \frac{1}{2}\|v_{k+1} - v_{k}\|^2  ,
\]
we obtain,
\begin{eqnarray*}
\Delta_k 
&=&  -\lambda \gamma\dotp{x_{k+1} -x^\star}{x_{k+1} - x_{k}}  - \gamma\pa{k+1+\frac{\gamma}2}
\norm{x_{k+1} - x_{k}}^2\\
&& - h\bpa{\gamma(k+1)\beta_{k+1} + (k+1+\gamma)B_k}\dotp{\nabla f( x_{k+1})}{x_{k+1} - x_{k}}\\
&&- h\lambda B_k\dotp{\nabla f( x_{k+1})}{x_{k+1} - x^\star}
- h^2\pa{\frac{B_k}{2}  + (k+1)\beta_{k+1}}B_k\norm{\nabla f( x_{k+1})}^2 .
\end{eqnarray*} 
Using the three-point identity
\[
\dotp{x_{k+1} -x^\star}{x_{k+1} - x_{k}}=\frac12\bpa{ \| x_{k+1} -x^\star \|^2-\| x_{k} -x^\star \|^2+\| x_{k+1} - x_{k}\|^2} ,
\]
we infer
\begin{align*}
&\cE_{k+1}(\lambda)  - \cE_k(\lambda) 
= \pa{\delta_{k+1} - \delta_k}\pa{f (x_{k+1})- f(x^\star)} + \delta_k  \pa{f (x_{k+1}) -f (x_k)} \\
&-\frac{\lambda \gamma}2\pa{\| x_{k+1} - x^\star \|^2 - \| x_{k} - x^\star \|^2 + \| x_{k+1} - x_{k}\|^2} - \gamma\pa{k + 1 + \frac{\gamma}2}\|x_{k+1}  -  x_{k}\|^2\\
& - h\bpa{\gamma(k+1)\beta_{k+1} + (k+1+\gamma)B_k}\dotp{\nabla f( x_{k+1})}{x_{k+1}  -  x_{k}} \\
& - h\lambda B_k\dotp{\nabla f( x_{k+1})}{x_{k+1}  -  x^\star} - h^2\pa{\frac{1}{2} B_k+ (k+1)\beta_{k+1}}B_k\norm{\nabla f( x_{k+1})}^2\\
& + \frac{1}{2}\bpa{c\|x_{k+1} - x^\star\|^2 - c\|x_{k} - x^\star\|^2}. 
\end{align*}
Taking $c=\lambda \gamma \geq 0$ and using the (convex) subdifferential inequality 
$\dotp{\nabla f( x_{k+1})}{x_{k+1} - x_{k}} \geq f( x_{k+1})-f( x_{k})$, we arrive at
\begin{align}
&\cE_{k+1}(\lambda)  - \cE_k(\lambda) 
\leq \pa{\delta_{k+1} - \delta_k}\pa{f (x_{k+1})- f(x^\star)} - h\lambda B_k\dotp{\nabla f( x_{k+1})}{x_{k+1}  -  x^\star}\nonumber\\ 
&+\pa{\delta_k -   h\bpa{\gamma(k+1)\beta_{k+1} + (k+1+\gamma)B_k}}\dotp{\nabla f( x_{k+1})}{x_{k+1}  -  x_{k}}  \nonumber\\ 
&- \frac{\gamma}2\pa{2k+\alpha+1} \| x_{k+1} - x_{k}\|^2- h^2\pa{\frac{1}{2} B_k +  (k+1)\beta_{k+1}}B_k\norm{\nabla f( x_{k+1})}^2.
\label{basic-formula-1-c}
 \end{align}
Using the definition of $\delta_k$ in \eqref{def:delta_b}, we have $\delta_k > 0$ under \ref{cond:G1}, and the second scalar product term in \eqref{basic-formula-1-c} is canceled. In view of \ref{cond:G1}, we use once again the convex subdifferential inequality
$\dotp{\nabla f( x_{k+1})}{x_{k+1} - x^\star} \geq f( x_{k+1})-f( x^\star)$, to obtain
\begin{multline}
\cE_{k+1}(\lambda)  - \cE_k(\lambda)  
\leq  \pa{\delta_{k+1} - \delta_k - h\lambda B_k}\pa{ f (x_{k+1})- f(x^\star)} \\
- h^2\pa{\frac{1}{2} B_k + (k+1) \beta_{k+1}}B_k\norm{\nabla f( x_{k+1})}^2  
- \frac{ \gamma}2\pa{2k+\alpha+1} \| x_{k+1} - x_{k}\|^2. \label{basic-formula-1-d}
\end{multline}
Since $\gamma \geq 0$ and under \ref{cond:G1}-\ref{cond:G2}, all terms in the bound \eqref{basic-formula-1-d} are non-positive, whence we deduce that the sequence $\seq{\cE_k(\lambda)}$ is non-negative and non-increasing. This yields that for all  $k \geq 0$,
\[
f(x_k)-\min_{\cH} f \leq \dfrac{{\cal E}_0(\lambda)}{\delta_k} ,
\]
hence proving claim \ref{ACFR,rescale-algo:claim1}. Assertions~\ref{ACFR,rescale-algo:claim2}-\ref{ACFR,rescale-algo:claim3}-\ref{ACFR,rescale-algo:claim4} follow by summing the inequality \eqref{basic-formula-1-d}.
\end{proof}

\subsection{Convergence of the iterates}
To prove convergence of the iterates, we will use the Lyapunov function $\cE_k(\lambda)$ in \eqref{eq:lyapEk} by appropriately choosing $\lambda$.
\begin{theorem}\label{thm:prox-conv-iter}
Let us make the same hypotheses as in Theorem~\ref{ACFR,rescale-algo}, and replace \ref{cond:G1}--\ref{cond:G2} respectively by: there exist $\varepsilon > 0$ and $\underline{B} > 0$ such that for all $k \geq 0$,
\begin{enumerate}[label={\rm($\cG_{\arabic*}^+$)},itemindent=10ex]
\item $B_k \geq \underline{B} > 0$; \label{cond:G1+}
\item $\delta_{k+1} - \delta_k - h\lambda B_k\leq -\varepsilon h B_k$. \label{cond:G2+}
\end{enumerate}
In addition, suppose that
\begin{enumerate}[label={\rm($\cG_{\arabic*}$)},itemindent=10ex,start=3]
\item $\DS \lim_{k \to +\infty} \frac{\beta_{k+1}}{B_k}=0$. \label{cond:G3}
\end{enumerate}
Then, the sequence $\seq{x_k}$ generated by the algorithm $\IPAHD$ converges weakly in $\cH$, and its limit belongs to $S=\argmin_{\cH} f$.
\end{theorem}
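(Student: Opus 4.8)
The plan is to conclude via the discrete Opial lemma (Lemma~\ref{le.Opial}), for which two properties must be established: (i) for every $x^\star\in S$ the limit $\lim_k\norm{x_k-x^\star}$ exists, and (ii) every weak sequential cluster point of $\seq{x_k}$ lies in $S$. Property (ii) is the easy half: from Theorem~\ref{ACFR,rescale-algo}\ref{ACFR,rescale-algo:claim1} together with $\delta_k\geq h(k+1)B_k\geq h(k+1)\underline{B}\to+\infty$ (using \ref{cond:G1+}), we get $f(x_k)\to\min_{\cH}f$; since $f$ is convex continuous, hence weakly lsc, any weak limit $\bar x$ of a subsequence satisfies $f(\bar x)\leq\liminf f(x_{k_n})=\min_{\cH}f$, so $\bar x\in S$. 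The whole difficulty is (i).

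For (i), I would mimic the continuous proof of Theorem~\ref{ACFR,rescale_convergence_continu} by running the Lyapunov analysis for two values of the free parameter, $\lambda=\alpha-1$ ($\gamma=0$) and $\lambda=\alpha-1-\varepsilon$ ($\gamma=\varepsilon$). Under \ref{cond:G1+}--\ref{cond:G2+}, both $\seq{\cE_k(\alpha-1)}$ and $\seq{\cE_k(\alpha-1-\varepsilon)}$ are nonincreasing and nonnegative — the latter directly from the $\gamma=\varepsilon$ instance of \eqref{basic-formula-1-d} under \ref{cond:G2+}, the former from its $\gamma=0$ boundary instance, whose monotonicity condition \ref{cond:G2} follows from the growth hypotheses — hence both converge, and so does their difference. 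A direct computation (expanding $\norm{v_k}^2$ for the two choices and collecting the $\norm{x_k-x^\star}^2$ terms) gives
\[
\cE_k(\alpha-1-\varepsilon)-\cE_k(\alpha-1)= h\varepsilon\bpa{B_k+(k+1)\beta_{k+1}}\pa{f(x_k)-f(x^\star)}-\varepsilon\,p_k,
\]
where
\[
p_k\eqdef\frac{\alpha-1}{2}\norm{x_k-x^\star}^2+k\dotp{x_k-x_{k-1}}{x_k-x^\star}+hk\beta_k\dotp{\nabla f(x_k)}{x_k-x^\star}.
\]
Since $f(x_k)-\min_{\cH}f=\cO(1/((k+1)B_k))$ by Theorem~\ref{ACFR,rescale-algo}\ref{ACFR,rescale-algo:claim1} and the bracket obeys $\bpa{B_k+(k+1)\beta_{k+1}}/((k+1)B_k)=\frac1{k+1}+\frac{\beta_{k+1}}{B_k}\to0$ by \ref{cond:G3}, the first term vanishes, whence $\seq{p_k}$ converges.

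It remains to pass from the convergence of $p_k$ to that of $\norm{x_k-x^\star}^2$, exactly as the auxiliary $q(t)$ was introduced in the continuous case. Setting
\[
q_k\eqdef\frac{\alpha-1}{2}\norm{x_k-x^\star}^2+(\alpha-1)h\sum_{j=1}^{k}\beta_j\dotp{\nabla f(x_j)}{x_j-x^\star},
\]
the three-point identity yields the telescoping relation
\[
q_k+\frac{k}{\alpha-1}\pa{q_k-q_{k-1}}=p_k-\frac{k}{2}\norm{x_k-x_{k-1}}^2+(\alpha-1)h\sum_{j=1}^{k}\beta_j\dotp{\nabla f(x_j)}{x_j-x^\star}.
\]
On the right, $p_k$ converges, $k\norm{x_k-x_{k-1}}^2\to0$ (terms of the convergent series in Theorem~\ref{ACFR,rescale-algo}\ref{ACFR,rescale-algo:claim4}), and the last sum converges because its summands are nonnegative (convexity) and summable. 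The summability is the discrete counterpart of Theorem~\ref{ACFR,rescale}\ref{ACFR,rescale:claim7}: I would obtain $\sum_k B_k\dotp{\nabla f(x_{k+1})}{x_{k+1}-x^\star}<+\infty$ by the same device as there, splitting the scalar-product term in the $\lambda=\alpha-1$ version of \eqref{basic-formula-1-c} with weights $\rho=\varepsilon/(\alpha-1)$ and $1-\rho$, applying the subdifferential inequality to the second part, and using \ref{cond:G2+} to keep the value-coefficient nonpositive; then $\beta_{k+1}/B_k\to0$ transfers summability to the $\beta$-weighted quantity. Consequently $q_k+\frac{k}{\alpha-1}(q_k-q_{k-1})$ converges, and a discrete averaging lemma (the summation analog of \cite[Lemma~7.2]{APR2}, provable by Silverman--Toeplitz after multiplying by the weights $W_k=W_0\prod_{j\leq k}\frac{j+\alpha-1}{j}$) yields, since $\alpha>1$, that $\seq{q_k}$ converges; convergence of its sum then forces $\lim_k\norm{x_k-x^\star}^2$ to exist. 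With (i) and (ii) in hand, Opial's lemma gives the weak convergence of $\seq{x_k}$ to a point of $S$.

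The main obstacle is this last passage. Establishing the discrete summability $\sum_k\beta_{k+1}\dotp{\nabla f(x_{k+1})}{x_{k+1}-x^\star}<+\infty$ and, above all, the discrete averaging lemma recovering $\lim q_k$ from $\lim\bpa{q_k+\frac{k}{\alpha-1}(q_k-q_{k-1})}$ are where the discretization genuinely bites: the clean integrating-factor identity $\frac{d}{dt}(t^{\alpha-1}q)=(\alpha-1)t^{\alpha-2}(q+\frac{t}{\alpha-1}\dot q)$ available in continuous time must be replaced by a careful summation-by-parts with the right telescoping weights, and one must verify that these weights grow like $k^{\alpha-1}$ so that the resulting weighted averages converge to the same limit.
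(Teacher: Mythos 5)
Your proposal is correct and follows essentially the same route as the paper's own proof: Opial's lemma, convergence of values to identify weak cluster points with $S$, convergence of the difference $\cE_k(\alpha-1-\varepsilon)-\cE_k(\alpha-1)$ of the two Lyapunov sequences combined with \ref{cond:G3} and $k\norm{x_k-x_{k-1}}^2\to 0$, the $\rho$-splitting of \eqref{basic-formula-1-c} with $\rho=\varepsilon/(\alpha-1)$ to get $\sum_k B_k\dotp{\nabla f(x_{k+1})}{x_{k+1}-x^\star}<+\infty$, and finally the passage from the convergence of $q_k+\frac{k}{\alpha-1}(q_k-q_{k-1})$ to that of $q_k$. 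The ``discrete averaging lemma'' you single out as the main obstacle is precisely Lemma~\ref{le.conv-seq}, already stated in the paper's appendix, so no Silverman--Toeplitz construction is required.
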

\begin{proof}
The proof relies on Opial's Lemma~\ref{le.Opial}. First, by Theorem~\ref{ACFR,rescale-algo}\ref{ACFR,rescale-algo:claim1}, for every $\lambda \in ]0,\alpha-1]$, we have  
\[
f(x_k)-\min_{\cH} f = \cO\pa{\dfrac{1}{\delta_k}}.
\]
Take $\lambda = \alpha -1$. Then $\gamma =0$ and $\delta_k= h(k+1)B_k$.
By assumption \ref{cond:G1+}, we deduce that $\delta_k \geq  h(k+1)\underline{B}$. Therefore, as $k\rightarrow +\infty$, we have $\delta_k \to +\infty$, and hence $f(x_k)\rightarrow\min_{\cH} f $. 
Since  $f$ is convex and continuous, it  is sequentially weakly lower semicontinuous.
Therefore, for every sequential weak cluster point $\bar x$ of $\seq{x_k}$, say $x_{k_j}\rightharpoonup \bar x$, we have
\[
f(\bar x) \leq \liminf_{j\rightarrow +\infty}f(x_{k_j}) =\lim_{k \rightarrow +\infty}f(x_k)=\min_{\cH} f .
\]
So, every sequential weak cluster point of $(x_k)_k$ belongs to $S$.

Now fix $x^\star\in S$, and we show that the sequence of the anchor functions $\seq{\norm{x_k -x^{\star}}}$ converges. According to the proof of Theorem~\ref{ACFR,rescale-algo}, we have that
for every $\varepsilon\in [0,\alpha -1[$,  the sequence $\seq{\cE_k(\alpha -1-\varepsilon)}$ converges. In turn, so does the sequence $\seq{\cE_k(\alpha -1-\varepsilon)-\cE_k(\alpha -1)}$.
Recalling that in the proof of Theorem~\ref{ACFR,rescale-algo}, $\gamma= \alpha -\lambda-1 $ and $c= \lambda \gamma$ are the values of the parameters which give the decreasing property of the sequence $\cE_k(\lambda)$, we have
\begin{eqnarray*}
&& \cE_k(\alpha -1-\varepsilon) =  h\pa{(k+1+ \varepsilon)B_k +  \varepsilon(k+1)\beta_{k+1}} \pa{f (x_k)-  f(x^\star)} \\
&& +\frac{1}{2}\norm{(\alpha -1-\varepsilon) (x_k -x^\star) + k (x_{k} - x_{k-1} + \beta_k h \nabla f( x_{k})  )}^2 +\frac{ \varepsilon (\alpha -1-\varepsilon)}{2} \|x_k-x^\star\|^2 \\
&&\cE_k(\alpha -1) =   h(k+1)B_k  ( f (x_k)-  f(x^\star) )\\
&&\hspace{1cm} +\frac{1}{2}\norm{(\alpha -1) (x_k -x^\star) + k (x_{k} - x_{k-1} + \beta_k h \nabla f( x_{k})  )}^2.
\end{eqnarray*}
Taking the difference, we obtain 
\begin{multline*}
\cE_k(\alpha -1-\varepsilon)- \cE_k(\alpha -1) =   h \varepsilon  \pa{B_k + (k+1) \beta_{k+1}}\pa{f (x_k)-  f(x^\star)}\\
 + \pa{\frac{ \varepsilon^2}{2}+\frac{ \varepsilon (\alpha -1-\varepsilon)}{2}} \|x_k-x^\star\|^2 \\
 - \varepsilon \dotp{(\alpha -1) (x_k -x^\star) + k (x_{k} - x_{k-1} + \beta_k h \nabla f( x_{k}))}{x_k -x^\star} .
\end{multline*}
Equivalently 
\begin{multline*}
\cE_k(\alpha -1-\varepsilon)- \cE_k(\alpha -1) =   h \varepsilon  \pa{B_k + (k+1) \beta_{k+1}} \pa{f (x_k)-  f(x^\star)}\\
+ \pa{\frac{ \varepsilon^2}{2}+\frac{ \varepsilon (\alpha -1-\varepsilon)}{2}- \varepsilon (\alpha -1)} \|x_k-x^\star\|^2 \\
- \varepsilon k\dotp{x_{k} - x_{k-1} + \beta_k h \nabla f( x_{k})}{x_k -x^\star} .
\end{multline*}
After reduction 
\begin{multline*}
\cE_k(\alpha -1-\varepsilon)- \cE_k(\alpha -1) =   h \varepsilon \pa{ B_k + (k+1) \beta_{k+1}}\pa{f (x_k)-  f(x^\star)}\\
+  \frac{ \varepsilon}{2}\pa{1-\alpha} \|x_k-x^\star\|^2 \\
- \varepsilon k\dotp{x_{k} - x_{k-1}}{x_k -x^\star} - \varepsilon k \beta_k h\dotp{\nabla f( x_{k})}{x_k -x^\star}.
\end{multline*}
Using the three-point identity
\[
2\dotp{ x_{k} -x^\star}{x_{k} -x_{k-1}} = \|x_{k} -x^\star\|^2 -\|x_{k-1} -x^\star\|^2 +\|  x_{k} -x_{k-1}\|^2,
\]
and after dividing by $ \varepsilon$, we deduce  the convergence of the sequence $\seq{\Delta_k}$ whose general term is given by 
\begin{multline}\label{eq:Dk}
 \Delta_k = h \pa{ B_k + (k+1) \beta_{k+1}} \pa{f (x_k)-  f(x^\star)} +   \frac{1}{2}\left(1-\alpha \right) \|x_k-x^\star\|^2 \\
-  k \beta_k h\dotp{ \nabla f( x_{k})}{x_k -x^\star}
-  \frac{k}{2}\bpa{\|x_{k} -x^\star\|^2 -\|x_{k-1} -x^\star\|^2 +\|  x_{k} -x_{k-1}\|^2}.
\end{multline}
To estimate the first term of $ \Delta_k $, we  use the upper-bound obtained in Theorem~\ref{ACFR,rescale-algo}\ref{ACFR,rescale-algo:claim1} by taking $\lambda =\alpha-1$, namely
\[
f(x_k)-\min_{\cH} f \leq \frac{1}{h(k+1)B_k}
\]
to obtain
\begin{eqnarray*}
 0\leq  h  \Big( B_k + (k+1) \beta_{k+1}\Big) ( f (x_k)-  f(x^\star) )  \leq \dfrac{B_k + (k+1) \beta_{k+1}}{(k+1)B_k} = \dfrac{1}{k+1}+ \frac{\beta_{k+1}}{B_k}.
\end{eqnarray*}
According to assumption \ref{cond:G3}, the right hand side of this inequality goes to zero as $k\to +\infty$, and thus so does the first term in \eqref{eq:Dk}.
In addition, we have $\DS\lim_{k} k \| x_{k} - x_{k-1}\|^2=0$, which follows from Theorem \ref{ACFR,rescale-algo}\ref{ACFR,rescale-algo:claim4}.
We have therefore obtained that the limit as $k\to +\infty$ of the sequence of real numbers $\seq{p_k}$ exists, where $p_k$ is defined by
\[
p_k \eqdef    \left(\alpha -1\right) \|x_k-x^\star\|^2 
+2k \beta_k h\left\dotp{ \nabla f( x_{k})}{x_k -x^\star   \right}
+ k\Big( \|x_{k} -x^\star\|^2 -\|x_{k-1} -x^\star\|^2 \Big).
\]
Let us set $u_k\eqdef\norm{x_k - x^{\star}}^2, w_k \eqdef h\beta_k \dotp{\nabla f( x_{k})}{x_{k} - x^\star}$ (the latter is non-negative by convexity of $f$). We have
\begin{eqnarray*}
p_k \eqdef  (\alpha -1)u_k + k(u_{k} - u_{k-1}) + 2kw_k .
\end{eqnarray*}
Set $q_k\eqdef (\alpha -1)\pa{u_k +2\sum_{i=0}^k w_i}$. We have 
\begin{equation}\label{basic-formula-2-a}
p_k= q_k+ \frac{k}{\alpha -1}\left(q_k-q_{k-1}\right) - 2(\alpha -1)\sum_{i=0}^kw_i .
\end{equation}
Let us now  prove the convergence of the series of non-negative real numbers
\[
\sum_k w_k = \sum_k h\beta_k \dotp{\nabla f( x_{k})}{x_{k} - x^\star} .
\]
Under conditions of Theorem~\ref{ACFR,rescale-algo}, we get from \eqref{basic-formula-1-c}
\begin{equation*}
\cE_{k+1}(\lambda)  - \cE_k(\lambda) \leq 
  \pa{\delta_{k + 1}  -  \delta_k}\pa{f (x_{k + 1})  -  f(x^\star)}  - h\lambda B_k\dotp{\nabla f( x_{k+1})}{x_{k+1}  -  x^\star} .
\end{equation*}
Using the (convex) differential inequality, we obtain, for each $1 \geq \rho>0$, 
\begin{multline*}
\bpa{(1-\rho)\lambda hB_k + \delta_k  - \delta_{k+1}}\pa{f (x_{k+1})- f(x^\star)} + \rho \lambda hB_k\dotp{\nabla f( x_{k+1})}{x_{k+1}  -  x^\star} \\
\leq
\cE_{k}(\lambda)  - \cE_{k+1}(\lambda) .
\end{multline*}
Recall the notations in the proof of Theorem~\ref{ACFR,rescale-algo}. Choosig $\lambda =\alpha-1>0$ and $\rho=\frac{\varepsilon}{\alpha-1} \in ]0,1[$, then $\gamma=0$ and $
\delta_k=(k+1)hB_k$. In view of condition \ref{cond:G2+}, we deduce that
\begin{eqnarray*}
(1-\rho)\lambda hB_k + \delta_k  - \delta_{k+1}&\geq& ( \alpha-1-\varepsilon)hB_k + \varepsilon hB_k-\lambda hB_k\\
&=&(\alpha-1 - \lambda)hB_k=0.
 \end{eqnarray*}
Therefore, 
\[
\rho \lambda hB_k\dotp{\nabla f( x_{k+1})}{x_{k+1} - x^\star} 
\leq  \cE_{k}(\alpha-1)  - \cE_{k+1}(\alpha-1) .
\]
By summing this inequality, we obtain
\[
\sum_khB_k\dotp{\nabla f( x_{k+1})}{x_{k+1} - x^\star} 
\leq
\frac{(\alpha-1){\cal E}_{0}(\alpha-1)}{\varepsilon} <+\infty .
\]
Returning to \eqref{basic-formula-2-a}, we have shown that the sequence  
\[
\seq{q_k+ \frac{k}{\alpha -1}\pa{q_k-q_{k-1}}}
\] 
converges to some limit. According to Lemma \ref{le.conv-seq} , we conclude that $\seq{q_k}$ converges to the same limit. By definition of $q_k$, and using that the series $\sum_k v_k$ converges, we conclude that the sequence $\seq{u_k}$ converges. All conditions of Opial's Lemma~\ref{le.Opial} are satisfied, which gives the weak convergence of the sequence $\seq{x_k}$ to some point in $S$.
\end{proof}

\subsection{Non-smooth case}\label{non-smooth-prox}

Now suppose that $f: \cH \to \Rb$ is a proper, lower semicontinuous and convex function.
To reduce to the smooth case, we follow the approach initiated in \cite{ACFR}. It  makes use of  the Moreau envelope of $f$, which is defined for any $ \theta> 0 $ by: 
\[
f_{\theta} (x) = \min_{z \in \cH} \left\lbrace f (z) + \frac{1}{2 \theta} \norm{ z - x}^2   \right\rbrace, \quad \text{for any $x\in \cH$.} 
\]
The interested reader may refer to \cite{BC,Bre1} for a comprehensive treatment of the Moreau envelope and its properties in a Hilbert setting. For instance, we recall that  $f_{\theta} $ is a continuously differentiable convex function, whose gradient is $\theta^{-1}$-Lipschitz continuous, and the set of minimizers is preserved by taking the Moreau envelope, that is $\argmin f_{\theta} = S = \argmin f$. {Owing to this property, the idea  is now to replace $f$ by $f_{\theta} $ in algorithm $\IPAHD$, and to take advantage of the continuous differentiability of $f_{\theta}$.}
The Hessian dynamic attached to $f_{\theta}$ would formally read
\[
\ddot{x}(t) + \frac{\alpha}{t} \dot{x}(t) 
+ \beta (t) \nabla^2 f_{\theta} (x(t))\dot{x} (t) + b(t)\nabla f_{\theta} (x(t)) = 0.
\]
However, we do not really need to work on this system (which requires $f_{\theta}$ to be $\cC^2$), but with the discretized form which only requires the function to be continuously differentiable, as is the case of $f_{\theta} $. Then, algorithm $\IPAHD$ now reads
\[
\begin{cases}
y_{k} &= x_{k}+ \dfrac{k}{k + \alpha }(x_{k}-x_{k-1})+{\dfrac{hk\beta_{k}}{k+\alpha }} \nabla f_{\theta}(x_{k})  \\
x _{k+1} &= \hbox{prox}_{ \lambda_{k}f_{\theta}} ( y_{k} ) ,
\end{cases}
\]
where we recall that $\lambda_{k}=\frac{hk(\beta_{k}+hb_{k})}{k+\alpha}$.
Thus, we just need to formulate these results in terms of $f$ and its proximal mapping. This is straightforward thanks to the following formulae from proximal calculus \cite{BC}:
\begin{enumerate}[label=$\bullet$]
\item $f_{\theta} (x)= f(\prox_{ \theta f}(x)) + \frac{1}{2\theta} \norm{x-\prox_{\theta f}(x)}^2$. 
\item $\nabla f_{\theta} (x)= \frac{1}{\theta} \left( x-\prox_{ \theta f}(x) \right)$,
\item $\prox_{ \lambda f_{\theta}}(x) = \frac{\theta}{\lambda +\theta}x + \frac{\lambda}{\lambda +\theta}\prox_{ (\lambda + \theta)f}(x).$
\end{enumerate}

\noindent We thus obtain the following  algorithm (NS stands for Non-Smooth):\\
\renewcommand{\algorithmcfname}{$\IPAHDNS$}
\begin{algorithm}[H]
\caption{Inertial Proximal Algorithm with Hessian Damping for Non-Smooth functions.}
Initialization: $x_0, x_1 \in \cH$ given\;
Set $\mu_k = \dfrac{\theta(k + \alpha)}{\theta(k + \alpha) + hk (\beta_k + hb_k )}$\;
\For{$k=1,\ldots $}{
$y_k=   x_{k} + \pa{1- \dfrac{\alpha}{k + \alpha}} (x_{k}  - x_{k-1})+
\dfrac{\beta h}{\theta}\pa{1- \dfrac{\alpha}{k + \alpha}}\pa{x_k-\prox_{ \theta f} (x_k)}$ \;  
$x_{k+1} =  \mu_k y_k + (1-\mu_k)\prox_{\frac{\theta}{\mu_k} f} (y_k)$.
}
\end{algorithm}

Capitalizing on the results of Theorem~\ref{ACFR,rescale-algo} and Theorem~\ref{thm:prox-conv-iter} and the properties of the Moreau envelope recapped above, we can now state the following convergence result:
\begin{theorem} 
Let $f: \cH \to \Rb$ be a proper, lower semicontinuous and convex function. Suppose that $\alpha > 1$, take $\theta > 0$ and $\lambda \in ]0,\alpha-1]$, set $\gamma\eqdef\alpha -\lambda-1\geq 0$, and $B_k$ and $\delta_k$ as in \eqref{def:delta_b}. Suppose that growth conditions \ref{cond:G1} and \ref{cond:G2} of Theorem~\ref{ACFR,rescale-algo} are satisfied.
Then, $\delta_k$ is positive and, for any sequence $\seq{x_k}$ generated by \IPAHDNS we have:
\begin{enumerate}[label={\rm (\roman*)}]
\item $f(\prox_{ \theta f}(x_k)) -\min_{\cH} f = \cO\pa{\dfrac{1}{\delta_k}}$ as $k\to +\infty$; \\
\item $\sum_{k \in \N}  \bpa{\delta_k  - \delta_{k+1} + h\lambda B_k}\pa{f(\prox_{ \theta f}(x_{k+1}))- f(x^\star)}<+\infty$; \\
\item $\sum_{k \in \N} h^2\pa{\dfrac{B_k}{2} + (k+1)\beta_{k+1}}B_k\norm{x_{k+1} - \prox_{\theta f}(x_{k+1})}^2 <+\infty$; \\
\item $\sum_{k \in \N} k \norm{x_{k+1} - x_{k}}^2 <+\infty$.
\end{enumerate}
Assume moreover that conditions \ref{cond:G1+}, \ref{cond:G2+} and \ref{cond:G3} of Theorem~\ref{thm:prox-conv-iter} are verified. Then any sequence $\seq{x_k}$ generated 
by algorithm \IPAHDNS converges weakly in $\cH$, and its limit belongs to $S$.
\end{theorem}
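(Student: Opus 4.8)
The plan is to reduce everything to the smooth case already settled in Theorems~\ref{ACFR,rescale-algo} and~\ref{thm:prox-conv-iter}, applied to the Moreau envelope $f_{\theta}$ in place of $f$. First I would observe that $f_{\theta}$ is convex and of class $\cC^{1,1}$, its gradient being $\theta^{-1}$-Lipschitz, so it meets the sole smoothness requirement those two theorems use (recall the footnote: the algorithmic results need only a convex differentiable objective). Next, using the proximal calculus identities $\prox_{\lambda f_{\theta}}(x) = \frac{\theta}{\lambda+\theta}x + \frac{\lambda}{\lambda+\theta}\prox_{(\lambda+\theta)f}(x)$ and $\nabla f_{\theta}(x) = \theta^{-1}\pa{x - \prox_{\theta f}(x)}$, one checks that running $\IPAHD$ on $f_{\theta}$ produces exactly the iteration $\IPAHDNS$; this is precisely how the latter was derived above. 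Crucially, the quantities $B_k$, $\delta_k$, $\gamma$ and the growth conditions \ref{cond:G1}--\ref{cond:G2} and \ref{cond:G1+}--\ref{cond:G3} depend only on $h$, $\alpha$, $\beta_k$, $b_k$, $k$, and not on the objective; hence they transfer verbatim, and Theorems~\ref{ACFR,rescale-algo} and~\ref{thm:prox-conv-iter} apply to the sequence $\seq{x_k}$ generated by $\IPAHDNS$ with $f$ replaced by $f_{\theta}$ throughout.

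The second step is to translate the resulting estimates back into statements about $f$. The dictionary is that $\argmin f_{\theta} = S$ and, since $\prox_{\theta f}(x^\star) = x^\star$ for any $x^\star \in S$, the envelope identity $f_{\theta}(x) = f(\prox_{\theta f}(x)) + \frac{1}{2\theta}\norm{x - \prox_{\theta f}(x)}^2$ gives $\min_{\cH} f_{\theta} = \min_{\cH} f$. The same identity, both terms on its right-hand side being non-negative, yields the pointwise bound $0 \leq f(\prox_{\theta f}(x_k)) - \min_{\cH} f \leq f_{\theta}(x_k) - \min_{\cH} f_{\theta}$, so claim~(i) is inherited directly from Theorem~\ref{ACFR,rescale-algo}\ref{ACFR,rescale-algo:claim1} applied to $f_{\theta}$; likewise the summability in claim~(ii) follows from \ref{ACFR,rescale-algo:claim2}, the coefficients $\delta_k - \delta_{k+1} + h\lambda B_k$ being non-negative under \ref{cond:G2}. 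For claim~(iii) I would substitute $\norm{\nabla f_{\theta}(x_{k+1})}^2 = \theta^{-2}\norm{x_{k+1} - \prox_{\theta f}(x_{k+1})}^2$ into \ref{ACFR,rescale-algo:claim3}; the harmless factor $\theta^{-2}$ does not affect summability. Claim~(iv) involves only the iterates and is therefore identical to \ref{ACFR,rescale-algo:claim4}.

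For the convergence of the iterates under the reinforced hypotheses \ref{cond:G1+}, \ref{cond:G2+}, \ref{cond:G3}, I would invoke Theorem~\ref{thm:prox-conv-iter} verbatim for $f_{\theta}$: since these conditions are objective-free and $f_{\theta}$ is convex and continuously differentiable, that theorem produces a sequence $\seq{x_k}$ converging weakly to a point of $\argmin f_{\theta}$, which equals $S$; hence the limit lies in $S$, as claimed. The only genuinely delicate point, and the one I would spell out carefully, is the reduction itself: verifying that $\IPAHDNS$ coincides with $\IPAHD$ driven by $f_{\theta}$ through the prox/envelope formulae, and confirming that all structural quantities and growth conditions are indeed objective-free so that the smooth theorems apply unchanged. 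Once this bookkeeping is in place, every assertion follows from the two preceding theorems together with the three Moreau-envelope identities, with no further estimation required.
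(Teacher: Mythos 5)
Your proposal is correct and follows exactly the paper's route: the paper proves this theorem precisely by observing that \IPAHDNS is \IPAHD applied to the Moreau envelope $f_{\theta}$ (via the three proximal-calculus identities), then invoking Theorems~\ref{ACFR,rescale-algo} and~\ref{thm:prox-conv-iter} for $f_{\theta}$ and translating back through $f_{\theta}(x)=f(\prox_{\theta f}(x))+\frac{1}{2\theta}\norm{x-\prox_{\theta f}(x)}^2$, $\nabla f_{\theta}(x)=\theta^{-1}\pa{x-\prox_{\theta f}(x)}$ and $\argmin f_{\theta}=S$. Your write-up is, if anything, more explicit than the paper's (which leaves the bookkeeping implicit), and your observations that the growth conditions are objective-free and that the factor $\theta^{-2}$ is harmless are exactly the points that make the reduction work.
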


\section{Convergence of gradient algorithms}\label{sec:gradient}
%
Throughout this section, $ f : \cH \to \R$ is a convex differentiable function whose gradient is $L$-Lipschitz continuous. In line with \cite {ACPR,CD,ACFR}, our analysis is based on the inertial dynamic $\DINAVD{\alpha, \beta, 1+ \frac{\beta}{t}}$ with damping parameters $\alpha \geq 3$, $\beta \geq 0$, that we recall for convenience:
\[
\ddot{x}(t) + \DS{\frac{\alpha}{t} }\dot{x}(t) +  \beta  \dfrac{d}{dt} \nabla f (x(t)) + \left(1+ \frac{\beta}{t}\right) \nabla  f (x(t)) = 0.
\]
Consider the following time discretization of $\DINAVD{\alpha, \beta, 1+ \frac{\beta}{t}}$ with  fixed temporal step size $h>0$,  and where we set $s=h^2$:
\begin{eqnarray*}
&&\frac{1}{s}(x_{k+1} - 2x_{k}+ x_{k-1} ) +    \frac{\alpha}{ks}(x_{k} - x_{k-1}) +  \frac{\beta}{\sqrt{s}}(\nabla f( x_{k}) - \nabla f( x_{k-1})   ) \\
&&+ \frac{\beta}{k\sqrt{s}} \nabla f( x_{k-1})  +  
\nabla f( y_{k}) =0.
\end{eqnarray*}
Taking $y_k$ inspired by the Nesterov accelerated gradient method, we obtain the following algorithm:\\
\renewcommand{\algorithmcfname}{$\IGAHD$}
\begin{algorithm}[H]
\caption{Inertial Gradient Algorithm with Hessian Damping.}
Initialization: $x_0, x_1 \in \cH$ given\;
Set $\alpha_k \eqdef  1 - \frac{\alpha}{k}$\;
\For{$k=1,\ldots $}{
$y_k = x_{k} + \alpha_k ( x_{k}  - x_{k-1}) - \beta \sqrt{s} \pa{ \nabla f (x_k)  - \nabla f (x_{k-1})} -  \frac{\beta\sqrt{s}}{k}\nabla f(x_{k-1})$ \;  
$x_{k+1} =  y_k - s \nabla f (y_k)$.
}
\end{algorithm}
%
\subsection{Convergence rates}
Following \cite{AC2}, set $t_{k+1} =\frac{k}{\alpha-1}$, whence $t_k = 1 + t_{k+1} \alpha_k$. Given  $x^\star \in S = \argmin f$, our Lyapunov analysis is based on the sequence $\seq{E_k}$ 
\begin{align}
E_k &\eqdef  t_k^2( f (x_k)-  f( x^\star) ) +\frac{1}{2s}\norm{v_k}^2 \label{Lyap-function-grad1}\\
v_k &\eqdef  (x_{k-1}  - x^\star) + t_k\pa{x_{k} - x_{k-1} + \beta \sqrt{s} \nabla f( x_{k-1})}.\label{Lyap-function-grad2}
\end{align}

\begin{theorem}\label{pr.decay_E_k}
Let  $f: \cH \to \R$ be a convex function whose gradient is $L$-Lipschitz continuous. Let $\seq{x_k}$ be a sequence generated by algorithm {\IGAHD}, where $\alpha \geq 3$, $0 \leq \beta <  2 \sqrt{s}$ and $sL \leq 1$. Then the sequence  $\seq{E_k}$  defined by {\rm\eqref{Lyap-function-grad1}--\eqref{Lyap-function-grad2}} is non-increasing, and the following convergence rates are satisfied:
\begin{enumerate}[label={\rm (\roman*)}]
\item $f(x_k)-\min_{\cH} f = \cO\pa{\dfrac{1}{k^2}}$ as $k\to +\infty$; \label{pr.decay_E_k:claim1}
\item Suppose moreover that $\alpha >3$. Then $\sum_{k \in \N} k\pa{f(x_k)-\min_{\cH} f}<+\infty$; \label{pr.decay_E_k:claim2}
\item Suppose that $\beta >0$. Then
\[
\sum_{k \in \N}  k^2 \| \nabla f (y_k) \|^2 < +\infty \qandq \sum_{k \in \N}  k^2 \| \nabla f (x_k) \|^2 < +\infty. \label{pr.decay_E_k:claim3}
\]
\end{enumerate}
\end{theorem}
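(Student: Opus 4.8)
The plan is to construct a Lyapunov analysis around the energy sequence $\seq{E_k}$ defined in \eqref{Lyap-function-grad1}--\eqref{Lyap-function-grad2}, establishing first that $E_k$ is non-increasing and then reading off the three convergence claims from the summed telescoping inequality. The backbone of the argument is to compute the one-step difference $E_{k+1} - E_k$ explicitly and bound it from above by a non-positive quantity (plus, for the sharper claims, genuinely negative terms that we can sum). First I would expand $\tfrac{1}{2s}\pa{\norm{v_{k+1}}^2 - \norm{v_k}^2}$ using the identity $\norm{v_{k+1}}^2 - \norm{v_k}^2 = 2\dotp{v_{k+1}-v_k}{v_{k+1}} - \norm{v_{k+1}-v_k}^2$, and simplify $v_{k+1}-v_k$ by substituting the update rules $x_{k+1} = y_k - s\nabla f(y_k)$ and the expression for $y_k$ from \IGAHD. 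The recurrence $t_k = 1 + t_{k+1}\alpha_k$ with $t_{k+1} = \tfrac{k}{\alpha-1}$ is precisely engineered so that the coefficients of the inertial term collapse cleanly, which is why this particular $\seq{t_k}$ is chosen.

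The key steps, in order, are as follows. First, rewrite $v_{k+1}-v_k$ in terms of $x_{k+1}-y_k = -s\nabla f(y_k)$, the gradient differences appearing in $y_k$, and the anchor $x_{k+1}-x^\star$; I expect the Hessian-damping term $\beta\sqrt{s}\nabla f$ to recombine so that $v_{k+1}-v_k$ is a clean combination of $\nabla f(y_k)$ and $(x_{k+1}-x^\star)$-type contributions. Second, handle the value-difference terms $t_k^2(f(x_{k+1})-f(x_k))$ by the descent lemma: since $\nabla f$ is $L$-Lipschitz and $sL \leq 1$, the gradient step gives $f(x_{k+1}) \leq f(y_k) - \tfrac{s}{2}\norm{\nabla f(y_k)}^2 \leq f(y_k) - \tfrac{s}{2}(2-sL)\norm{\nabla f(y_k)}^2$, and then bound $f(y_k)$ against $f(x_k)$ and $f(x^\star)$ via the convex subdifferential inequality. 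Third, collect all terms and verify that under the hypotheses $\alpha \geq 3$, $sL \leq 1$, and $0 \leq \beta < 2\sqrt{s}$, the coefficients of $\norm{\nabla f(y_k)}^2$, of $\norm{x_{k+1}-x_k}^2$, and of $(f(x_k)-f(x^\star))$ are all non-positive; the condition $\beta < 2\sqrt{s}$ should be exactly what keeps the $\norm{\nabla f(y_k)}^2$ coefficient negative, and $\alpha \geq 3$ what controls the value coefficient through the $t_k^2 - t_{k+1}^2$ discrepancy. This yields monotonicity of $\seq{E_k}$ and hence claim~\ref{pr.decay_E_k:claim1}, since $E_k \geq t_k^2(f(x_k)-\min_\cH f)$ with $t_k \sim k/(\alpha-1)$ gives $f(x_k)-\min_\cH f = \cO(1/k^2)$.

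For claim~\ref{pr.decay_E_k:claim2}, the strict inequality $\alpha > 3$ should make the coefficient of $(f(x_k)-f(x^\star))$ in the one-step bound strictly negative of order $\cO(k)$; summing the telescoped inequality $E_{k+1}-E_k \leq -c\,k(f(x_k)-\min_\cH f) + \cdots$ against the lower bound $E_k \geq 0$ then gives $\sum_k k(f(x_k)-\min_\cH f) < +\infty$. For claim~\ref{pr.decay_E_k:claim3}, when $\beta > 0$ the surviving negative term carries a factor $t_k^2\norm{\nabla f(y_k)}^2$, so summing yields $\sum_k k^2\norm{\nabla f(y_k)}^2 < +\infty$; the companion bound on $\sum_k k^2\norm{\nabla f(x_k)}^2$ then follows by comparing $\nabla f(x_k)$ with $\nabla f(y_k)$ through the $L$-Lipschitz property together with the already-established control on $\norm{x_k - x_{k-1}}$ and the gradient-difference terms, perhaps using a discrete Gronwall or summation-by-parts argument. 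The main obstacle I anticipate is the bookkeeping in the third step: getting every coefficient into a form where the three hypotheses $\alpha \geq 3$, $\beta < 2\sqrt{s}$, $sL \leq 1$ simultaneously force non-positivity is delicate, because the Hessian-damping gradient-difference terms cross-couple the $\norm{\nabla f(y_k)}^2$ and inner-product terms, and one must complete the square (or absorb cross terms via Young's inequality with a carefully tuned weight) without destroying the slack needed for claims~\ref{pr.decay_E_k:claim2} and \ref{pr.decay_E_k:claim3}.
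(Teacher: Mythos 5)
Your overall architecture matches the paper's proof: the same Lyapunov sequence, the same expansion of $\tfrac{1}{2s}\pa{\norm{v_{k+1}}^2-\norm{v_k}^2}$ via $\dotp{v_{k+1}-v_k}{v_{k+1}}-\tfrac12\norm{v_{k+1}-v_k}^2$, the computation of $v_{k+1}-v_k$ (which in fact collapses exactly to $-s\,t_{k+1}\nabla f(y_k)$, thanks to the recurrence $t_k=1+t_{k+1}\alpha_k$, as you anticipated), and telescoping of the one-step inequality to obtain claims \ref{pr.decay_E_k:claim2} and \ref{pr.decay_E_k:claim3}. However, there is a genuine gap at the heart of your second step: you invoke only the \emph{standard} descent lemma, $f(x_{k+1})\leq f(y_k)-\tfrac{s}{2}\norm{\nabla f(y_k)}^2$, combined with convexity to compare $f(y_k)$ with $f(x_k)$ and $f(x^\star)$. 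That combination yields $f(x_{k+1})\leq f(x)+\dotp{\nabla f(y_k)}{y_k-x}-\tfrac{s}{2}\norm{\nabla f(y_k)}^2$ and nothing more. The paper instead uses the \emph{reinforced} descent lemma \eqref{eq:descentlemma} (specific to convex $f$ with $sL\leq 1$, quoted from \cite[Lemma~1]{ACFR}), which carries the additional term $-\tfrac{s}{2}\norm{\nabla f(x)-\nabla f(y)}^2$.

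This extra term is not bookkeeping; it is exactly what makes the case $\beta>0$ work. After collecting terms, the Hessian-damping part of $y_k$ produces in $E_{k+1}-E_k$ the indefinite cross term $-t_{k+1}^2\,\beta\sqrt{s}\,\dotp{\nabla f(y_k)}{\nabla f(x_k)}$, which is of order $k^2$ and of arbitrary sign. With your tools the only negative quadratic available is in $\norm{\nabla f(y_k)}^2$; there is no negative $\norm{\nabla f(x_k)}^2$ (or $\norm{\nabla f(x_k)-\nabla f(y_k)}^2$) term anywhere, so Young's inequality cannot absorb the cross term: any choice of weight leaves an unabsorbed positive multiple of $\norm{\nabla f(x_k)}^2$. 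In the paper, the reinforced lemma applied at $x=x_k$ and weighted by $t_{k+1}(t_{k+1}-1)$ supplies precisely the missing piece $-\tfrac{s}{2}t_{k+1}(t_{k+1}-1)\norm{\nabla f(x_k)-\nabla f(y_k)}^2$, and non-negativity of the resulting quadratic form in $\pa{\norm{\nabla f(x_k)},\norm{\nabla f(y_k)}}$ reduces to the discriminant condition $\pa{t_{k+1}(\beta\sqrt{s}-s)+s}^2\leq s^2 t_{k+1}(t_{k+1}-1)$, i.e.\ asymptotically $\beta<2\sqrt{s}$. So the threshold $2\sqrt{s}$ arises from this two-variable quadratic form, not from the coefficient of $\norm{\nabla f(y_k)}^2$ alone as your plan predicts. (For $\beta=0$ your plan is fine --- it is the classical Nesterov/FISTA analysis.) Two smaller points: your chain $f(x_{k+1})\leq f(y_k)-\tfrac{s}{2}\norm{\nabla f(y_k)}^2\leq f(y_k)-\tfrac{s}{2}(2-sL)\norm{\nabla f(y_k)}^2$ is written in the wrong order, since $2-sL\geq 1$; and for the second summability in claim \ref{pr.decay_E_k:claim3} you need neither Gronwall nor any control on $\norm{x_k-x_{k-1}}$ (the latter is proved in the paper only \emph{after} this theorem, using it, so relying on it here would be circular): from $x_{k+1}=y_k-s\nabla f(y_k)$ and Lipschitz continuity one gets directly $\norm{\nabla f(x_{k+1})}\leq(1+Ls)\norm{\nabla f(y_k)}$.
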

\begin{proof}
The proof is an adaptation of \cite{ACFR}. 
We rely on the following reinforced version of the descent lemma, see \cite[Lemma~1]{ACFR}, and which is specific to the convex case. Since $s \leq \frac{1}{L}$, and $\nabla f$ is convex and $L$-Lipschitz continuous,
\begin{equation}\label{eq:descentlemma}
f(y - s \nabla f (y)) \leq f (x) + \left\dotp{ \nabla f (y)}{y-x \right} -\frac{s}{2} \|  \nabla f (y) \|^2 -\frac{s}{2} \| \nabla f (x)- \nabla f (y) \|^2
\end{equation}
for all $x, y\in \cH$. Let us write it successively at $y=y_k$ and  $x= x_k$, then at $y=y_k$,  $x= x^\star$. Since $\xkp=y_k-s\nabla f (y_k)$ and $\nabla f (x^\star)=0$, we get
\begin{align}
f(x_{k+1}) &\leq f(x_k) + \left\dotp{  \nabla f(y_k)}{y_k-x_k \right} -\frac{s}{2} \|   \nabla f (y_k) \|^2 -\frac{s}{2} \| \nabla f (x_k)- \nabla f (y_k) \|^2 \label{eq.majo_Theta(x_k+1)-Theta(x_k)-a}\\
f(x_{k+1}) &\leq f(x^\star) + \left\dotp{  \nabla f (y_k)}{y_k-x^\star \right} -\frac{s}{2} \|  \nabla f (y_k) \|^2 -\frac{s}{2} \| \nabla f (y_k) \|^2. \label{eq.majo_Theta(x_k+1)-min_Theta-b}
\end{align}
Multiplying \eqref{eq.majo_Theta(x_k+1)-Theta(x_k)-a} by $t_{k+1}-1$ (which is non-negative for $k$ large enough), then adding \eqref{eq.majo_Theta(x_k+1)-min_Theta-b}, we derive that
\begin{multline}\label{eq.combinaison_eq_prec_2-a}
t_{k+1}(f(x_{k+1}) -f(x^\star))\leq (t_{k+1}-1)(f(x_k)-f(x^\star)) \\
+\dotp{ \nabla f (y_k)}{(t_{k+1}-1)(y_k-x_k)+y_k-x^\star} -\frac{s}{2}t_{k+1}\| \nabla f (y_k) \|^2 \\
- \frac{s}{2}(t_{k+1}-1) \| \nabla f (x_k)- \nabla f (y_k) \|^2 - \frac{s}{2} \| \nabla f (y_k) \|^2.
\end{multline}
Let us multiply \eqref{eq.combinaison_eq_prec_2-a} by $t_{k+1}$ to make appear $E_k$. We obtain 
\begin{multline}
t_{k+1}^2(f(x_{k+1}) -f(x^\star))\leq (t_{k+1}^2-t_{k+1}-t_k^2)(f(x_k)-f(x^\star)) + t_k^2(f(x_k)-f(x^\star)) \\
+t_{k+1} \dotp{ \nabla f (y_k)}{(t_{k+1}-1)(y_k-x_k)+y_k-x^\star} -\frac{s}{2}t_{k+1}^2\| \nabla f (y_k) \|^2\\
- \frac{s}{2}(t_{k+1}^2-t_{k+1}) \| \nabla f (x_k)- \nabla f (y_k) \|^2 -
\frac{s}{2}t_{k+1} \| \nabla f (y_k) \|^2. \label{basic-est-1}
\end{multline}
We first assume that $\alpha \geq 3$. So we have $t_{k+1}^2-t_{k+1}-t_k^2 \leq 0$, which gives 
\begin{multline*}
t_{k+1}^2(f(x_{k+1} -f(x^\star))\leq  t_k^2(f(x_k)-f(x^\star)) \\
+t_{k+1} \dotp{ \nabla f (y_k)}{(t_{k+1}-1)(y_k-x_k)+y_k-x^\star} -\frac{s}{2}t_{k+1}^2\| \nabla f (y_k) \|^2 \\
- \frac{s}{2}(t_{k+1}^2-t_{k+1}) \| \nabla f (x_k)- \nabla f (y_k) \|^2 -
\frac{s}{2}t_{k+1} \| \nabla f (y_k) \|^2.
\end{multline*}
According to the definition of $E_k$, we infer
\begin{eqnarray*}
E_{k+1}  - E_k &\leq& t_{k+1} \dotp{ \nabla f (y_k)}{(t_{k+1}-1)(y_k-x_k)+y_k-x^\star} -\frac{s}{2}t_{k+1}^2\| \nabla f (y_k) \|^2\\
&& - \frac{s}{2}(t_{k+1}^2-t_{k+1}) \| \nabla f (x_k)- \nabla f (y_k) \|^2 -
\frac{s}{2}t_{k+1} \| \nabla f (y_k) \|^2 \\
&& + \frac{1}{2s}\|v_{k+1}\|^2 -\frac{1}{2s}\|v_k\|^2 .
\end{eqnarray*}
Let us compute the last term in the last expression with the help of the elementary identity
\[
\frac{1}{2}\|v_{k+1}\|^2 -\frac{1}{2}\|v_k\|^2  = \left\dotp{ v_{k+1} -v_{k}}{v_{k+1} \right} - \frac{1}{2}\|v_{k+1} - v_{k}\|^2  .
\]
By definition of $v_k$ in \eqref{Lyap-function-grad2}, and according to \IGAHD and $t_k - 1= t_{k+1} \alpha_k$, we have
\begin{align*}
v_{k+1} - v_{k} 
&= x_{k} - x_{k-1} + t_{k+1}(x_{k+1} - x_{k} + \beta \sqrt{s} \nabla f( x_{k})) -t_k (x_{k} - x_{k-1} + \beta \sqrt{s}  \nabla f( x_{k-1}))\\
&= t_{k+1}(x_{k+1} - x_{k}) -(t_k -1)  (x_{k} - x_{k-1})  +\beta \sqrt{s} \bpa{ t_{k+1} \nabla f( x_{k})-  t_{k}\nabla f( x_{k-1})}  \\
&= t_{k+1}\Big(x_{k+1} - (x_{k} +\alpha_k  (x_{k} - x_{k-1})\Big)   +\beta \sqrt{s} \bpa{t_{k+1} \nabla f( x_{k})-  t_{k}\nabla f( x_{k-1})} \\
&= t_{k+1}\left(x_{k+1} - y_k\right) -  t_{k+1}\beta \sqrt{s}( \nabla f( x_{k})-  \nabla f( x_{k-1}) ) - t_{k+1} \frac{\beta \sqrt{s}}{k}\nabla f( x_{k-1})\\
&\quad+\beta \sqrt{s} \bpa{t_{k+1} \nabla f( x_{k})-  t_{k}\nabla f( x_{k-1})}\\
&= t_{k+1}(x_{k+1} - y_k) +  \beta \sqrt{s} \pa{t_{k+1} \pa{1- \frac{1}{k}} -   t_{k}} \nabla f( x_{k-1}) \\
&= t_{k+1}\left(x_{k+1} - y_k\right) = - st_{k+1} \nabla f( y_{k})  .
\end{align*}
Hence
\begin{multline*}
\frac{1}{2s}\|v_{k+1}\|^2 -\frac{1}{2s}\|v_k\|^2  =  - \frac{s}{2}t_{k+1}^2\| \nabla f( y_{k}) \|^2 \\
- t_{k+1} \dotp{\nabla f( y_{k})}{x_{k} - x^\star  +  t_{k+1}\pa{x_{k+1} - x_{k} + \beta \sqrt{s} \nabla f( x_{k})}} .
\end{multline*}
Collecting the above results, we obtain
\begin{eqnarray*}
 E_{k+1}  - E_k &\leq& t_{k+1} \dotp{\nabla f (y_k)}{(t_{k+1}-1)(y_k-x_k)+y_k-x^\star} -st_{k+1}^2\| \nabla f (y_k) \|^2\\
&&  - t_{k+1} \dotp{\nabla f( y_{k})}{x_{k} - x^\star  +  t_{k+1}\pa{x_{k+1} - x_{k} + \beta \sqrt{s} \nabla f( x_{k})}} \\
 && - \frac{s}{2}(t_{k+1}^2-t_{k+1}) \| \nabla f (x_k)- \nabla f (y_k) \|^2 -
\frac{s}{2}t_{k+1} \| \nabla f (y_k) \|^2 .
 \end{eqnarray*}
Equivalently
\begin{multline*}
E_{k+1}  - E_k \leq t_{k+1} \dotp{\nabla f (y_k)}{A_k } -st_{k+1}^2 \| \nabla f (y_k) \|^2 \\
 - \frac{s}{2}(t_{k+1}^2-t_{k+1}) \| \nabla f (x_k)- \nabla f (y_k) \|^2 -
\frac{s}{2}t_{k+1} \| \nabla f (y_k) \|^2 ,
\end{multline*}
with
\begin{align*}
A_k
&= (t_{k+1}-1)(y_k-x_k)+y_k  - x_{k}  -  t_{k+1}\pa{x_{k+1} - x_{k} + \beta \sqrt{s} \nabla f( x_{k})}\\
&= t_{k+1}(y_k - x_k) - t_{k+1}(x_{k+1} - x_{k})-  t_{k+1} \beta \sqrt{s} \nabla f( x_{k})\\
&= t_{k+1}(y_k - x_{k+1} )-  t_{k+1} \beta \sqrt{s} \nabla f( x_{k})\\
&= st_{k+1}\nabla f (y_k) -  t_{k+1} \beta \sqrt{s} \nabla f( x_{k}).
\end{align*}
Consequently
\begin{align*}
E_{k+1}  - E_k 
&\leq t_{k+1} \dotp{\nabla f (y_k)}{st_{k+1}\nabla f (y_k) -  t_{k+1} \beta \sqrt{s} \nabla f( x_{k})}\\
&\quad -st_{k+1}^2 \| \nabla f (y_k) \|^2   - \frac{s}{2}(t_{k+1}^2-t_{k+1}) \| \nabla f (x_k)- \nabla f (y_k) \|^2 - \frac{s}{2}t_{k+1} \| \nabla f (y_k) \|^2 \\
&= - t_{k+1}^2 \beta \sqrt{s}\dotp{\nabla f (y_k)}{\nabla f( x_{k})}  - \frac{s}{2}(t_{k+1}^2-t_{k+1}) \| \nabla f (x_k)- \nabla f (y_k) \|^2 \\
&\quad - \frac{s}{2}t_{k+1} \| \nabla f (y_k) \|^2 \\
&=   - t_{k+1}B_k ,
\end{align*}
where
\[
B_k \eqdef t_{k+1} \beta \sqrt{s}\dotp{\nabla f (y_k)}{\nabla f( x_{k})}  + \frac{s}{2}(t_{k+1}-1) \| \nabla f (x_k)- \nabla f (y_k) \|^2 + \frac{s}{2} \| \nabla f (y_k) \|^2 .
\]
When $\beta=0$ we have $B_k\geq 0$. Let us analyze the  sign of $B_k$ in the case $\beta>0$. Denote for short $X = \nabla f (x_k)$ and $Y = \nabla f (y_k)$. We have
\begin{align*}
B_k 
& = \frac{s}{2} \|Y\|^2 + \frac{s}{2}(t_{k+1}-1) \|Y-X\|^2 + t_{k+1} \beta \sqrt{s}\dotp{Y}{X}  \\
& = \frac{s}{2}t_{k+1} \| Y \|^2 + \pa{t_{k+1}(\beta \sqrt{s}-s)+s} \dotp{Y}{X} + \frac{s}{2}(t_{k+1}-1) \|X\|^2 \\
& \geq  \frac{s}{2}t_{k+1} \|Y\|^2
 - \pa{t_{k+1}(\beta \sqrt{s}-s)+s}\|Y\| \|X\| + \frac{s}{2}(t_{k+1}-1) \|X\|^2.
\end{align*}
Elementary algebra gives that the above quadratic form is non-negative when
\[
\left(t_{k+1}( \beta \sqrt{s} -s  ) +s  \right)^2 \leq s^2 t_{k+1} (t_{k+1}-1).
\]
Recall that $t_k$ is of order $k$. Hence, this inequality is satisfied for $k$ large enough if  $( \beta \sqrt{s} -s  )^2 < s^2$, which is equivalent to $\beta < 2 \sqrt{s}$.
Under this condition, we deduce that $E_k$ is non-increasing which entails claim~\ref{pr.decay_E_k:claim1}. Similar argument gives that for $0 <  \varepsilon <  2 \sqrt{s}\beta - \beta^2$ (such $ \varepsilon$ exists according to assumption $0 < \beta < 2 \sqrt{s}$) 
\[
E_{k+1} - E_k +  \demi  \varepsilon t_{k+1}^2 \| \nabla f (y_k) \|^2 \leq 0. 
\]
Summing these inequalities, we obtain assertion~\ref{pr.decay_E_k:claim3} on $y_k$. The summability claim on $x_k$ is consequence of that on $y_k$. Indeed, Lipschitz continuity of the gradient and \IGAHD yield
\[
\norm{\nabla f(x_{k+1})} \leq (1+Ls)\norm{\nabla f(y_k)} .
\]
Suppose now $\alpha >3$. Returning to \eqref{basic-est-1}, by a similar argument as above we obtain
\begin{eqnarray}
E_{k+1}  - E_k +   (t_k^2 -t_{k+1}^2+t_{k+1})\pa{f(x_k)-f(x^\star)} \leq 0. \label{basic-est-2}
\end{eqnarray}
From $t_{k+1} =\frac{k}{\alpha-1}$ we get $t_k^2 -t_{k+1}^2+t_{k+1}\geq \frac{\alpha-3}{(\alpha-1)^2} k$, and it follows from \eqref{basic-est-2} that
\begin{eqnarray}
E_{k+1}  - E_k +   \frac{\alpha-3}{(\alpha-1)^2} k\Big(f(x_k)-f(x^\star)\Big) \leq 0. \label{basic-est-3}
\end{eqnarray}
By summing the inequalities \eqref{basic-est-3} with respect to $k$, and since $\alpha > 3$, 
we obtain assertion~\ref{pr.decay_E_k:claim2}.
\end{proof}

\if
{
\subsection{Convergence of the iterates}
Let us write the algorithm \IGAHD equivalently as follows
\begin{eqnarray}\label{pret1}
\begin{cases}
y_k=   x_{k} + \alpha_k ( x_{k}  - x_{k-1}) -
g_k \vspace{2mm} 
\\
x_{k+1} = y_k - s \nabla f (y_k) .
\end{cases}
\vspace{1mm}
\end{eqnarray}
where
\begin{equation}\label{pret2}
g_k = \beta \sqrt{s}   \left( \nabla f (x_k)  - \nabla f (x_{k-1}) \right) +  \frac{\beta \sqrt{s}}{k}\nabla f( x_{k-1}).
\end{equation}
According to the estimates of the gradient terms obtained in Theorem \ref{pr.decay_E_k}, let us show that \IGAHD falls into the setting of the perturbed version of $\mbox{(AVD)}_{\alpha}$ studied in \cite{ACPR}.
Indeed, this is not immediate, and requires some transformation to exactly fit the setting of \cite{ACPR}.
First observe that, according to Theorem \ref{pr.decay_E_k}, we have

}
\fi

\subsection{Convergence of the velocities}
Here, we provide some key estimates on fast convergence of the velocities, which will also prove useful when it will come to guaranteeing convergence of the sequence of iterates.
\begin{theorem}\label{pr.decay_W_k}
Let  $f: \cH \to \R$ be a convex function whose gradient is $L$-Lipschitz continuous. Let $\seq{x_k}$ be a sequence generated by {\IGAHD}, where $\alpha > 3$, $0 < \beta <  2 \sqrt{s}$ and $sL \leq 1$. Then the following convergence rates on the velocities are satisfied:
\begin{enumerate}[label={\rm (\roman*)}]
\item $\sup_k  k\|  x_{k}  - x_{k-1} \|<+\infty$; \label{pr.decay_W_k:claim1}
\item $\sum_k  k\|  x_{k}  - x_{k-1} \|^2<+\infty$. \label{pr.decay_W_k:claim2}
\end{enumerate}
\end{theorem}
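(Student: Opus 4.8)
The plan is to transpose to the discrete level the continuous-time estimates of Theorem~\ref{ACFR,rescale}\ref{ACFR,rescale:claim5}--\ref{ACFR,rescale:claim6} (namely $\int_{t_0}^{+\infty}t\norm{\dot x}^2\,dt<+\infty$ and $\sup_t\norm{x(t)}<+\infty$), the discrete template being the proximal velocity estimate of Theorem~\ref{ACFR,rescale-algo}\ref{ACFR,rescale-algo:claim4}. Throughout I use that, by Theorem~\ref{pr.decay_E_k}, the sequence $\seq{E_k}$ is non-increasing with non-negative terms, and that Theorem~\ref{pr.decay_E_k}\ref{pr.decay_E_k:claim3} gives $\sum_k k^2\norm{\nabla f(x_k)}^2<+\infty$; in particular the general term tends to $0$, so $k\norm{\nabla f(x_k)}\to 0$, i.e. $\norm{\nabla f(x_k)}={\rm o}(1/k)$.

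I would first establish that $\seq{x_k}$ is bounded, and deduce claim~\ref{pr.decay_W_k:claim1} from it. Since $E_k$ is non-increasing with non-negative terms, $\norm{v_k}$ is bounded, say $\norm{v_k}\leq C$. Writing $v_k=(x_{k-1}-x^\star)+t_k u_k$ with $u_k\eqdef x_k-x_{k-1}+\beta\sqrt{s}\,\nabla f(x_{k-1})$ and dividing by $t_k$, and recalling $t_k\sim k/(\alpha-1)$, I obtain the recursion $\norm{x_k-x^\star}\leq(1-1/t_k)\norm{x_{k-1}-x^\star}+r_k$, where $r_k$ gathers $\norm{v_k}/t_k$ and $\beta\sqrt{s}\,\norm{\nabla f(x_{k-1})}$ and hence satisfies $r_k\leq D/t_k$ for some $D>0$ and $k$ large. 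A one-line induction (the bound $\max(\,\cdot\,,D)$ propagates) gives $\sup_k\norm{x_k-x^\star}<+\infty$. Claim~\ref{pr.decay_W_k:claim1} then follows: from $t_k u_k=v_k-(x_{k-1}-x^\star)$ we get $t_k\norm{u_k}\leq\norm{v_k}+\norm{x_{k-1}-x^\star}$, which is bounded, so $\norm{u_k}=\cO(1/k)$; combining $\norm{x_k-x_{k-1}}\leq\norm{u_k}+\beta\sqrt{s}\,\norm{\nabla f(x_{k-1})}$ with $\norm{\nabla f(x_{k-1})}={\rm o}(1/k)$ yields $\sup_k k\norm{x_k-x_{k-1}}<+\infty$.

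For claim~\ref{pr.decay_W_k:claim2} the pointwise $\cO(1/k)$ bound is not enough (it only makes the summands $\cO(1/k)$), and a direct recursion on the velocities $d_k\eqdef x_{k+1}-x_k$, of the form $d_k=\alpha_k d_{k-1}+\eta_k$ with $\eta_k$ collecting the gradient and Hessian-damping terms, does not close: the cross terms $\sum_k k^2\dotp{d_{k-1}}{\eta_k}$ are not controlled by $\sum_k k^2\norm{\nabla f(x_k)}^2<+\infty$. Instead I would run the Lyapunov analysis with a free parameter, exactly as in the proximal case. I introduce the family $\seq{E_k^\lambda}$ obtained from $\seq{E_k}$ by inserting a factor $\lambda/(\alpha-1)$ in front of $(x_{k-1}-x^\star)$ inside $v_k$ (so that $\lambda=\alpha-1$ recovers $E_k$) and by adding an anchor term proportional to $(\alpha-1-\lambda)\norm{x_{k-1}-x^\star}^2$, mirroring the term $\tfrac{c}{2}\norm{x_k-x^\star}^2$ of \eqref{eq:lyapEk}. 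Repeating the computation of Theorem~\ref{pr.decay_E_k} for $\lambda<\alpha-1$, and this time retaining the velocity contributions that cancelled when $\lambda=\alpha-1$, I expect a descent inequality of the form
\[
E_{k+1}^\lambda-E_k^\lambda + c_1(\alpha-1-\lambda)\,k\norm{x_{k+1}-x_k}^2 + (\text{non-negative gradient and value terms})\leq 0 ,
\]
with $c_1>0$. Since $E_k^\lambda\geq 0$, summing this inequality over $k$ produces $\sum_k k\norm{x_{k+1}-x_k}^2<+\infty$, which is claim~\ref{pr.decay_W_k:claim2}.

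The main obstacle is precisely this last computation. One must verify that, for $\lambda<\alpha-1$, the three-point identities applied to the anchor term and to $\norm{v_k^\lambda}^2$ combine with the reinforced descent lemma \eqref{eq:descentlemma} so that the surviving $\norm{x_{k+1}-x_k}^2$ term carries a coefficient of order $k$ with the correct sign, while the extra perturbation caused by the Hessian-damping gradient differences remains absorbable using the already established summability $\sum_k k^2\norm{\nabla f(x_k)}^2<+\infty$ together with Theorem~\ref{pr.decay_E_k}\ref{pr.decay_E_k:claim3}. This is the discrete counterpart of passing from $\lambda=\alpha-1$ to $\lambda=\alpha-1-\varepsilon$ in the continuous proof of Theorem~\ref{ACFR,rescale}, where the extra term $\varepsilon\, t\norm{\dot x}^2$ surfaces in \eqref{bacic-Liap-33} and yields $\int t\norm{\dot x}^2\,dt<+\infty$.
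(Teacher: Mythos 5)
Your treatment of claim \ref{pr.decay_W_k:claim1} is correct and follows a genuinely different route from the paper's. The paper proves \ref{pr.decay_W_k:claim1} by evaluating the reinforced descent lemma \eqref{eq:descentlemma} along the update (inequality \eqref{energy1}), expanding $\norm{y_k-x_k}^2$, multiplying by $k^2$ to reach the recursion \eqref{basic10}, and closing with a discrete Gronwall argument. You instead exploit the monotonicity of $\seq{E_k}$ directly: boundedness of $\norm{v_k}$, the decay $k\norm{\nabla f(x_k)}\to 0$ (which does follow from square-summability of $k\norm{\nabla f(x_k)}$, Theorem~\ref{pr.decay_E_k}\ref{pr.decay_E_k:claim3}), and your convex-combination recursion $\norm{x_k-x^\star}\leq(1-1/t_k)\norm{x_{k-1}-x^\star}+D/t_k$ give boundedness of $\seq{x_k}$, after which $t_k\norm{u_k}\leq\norm{v_k}+\norm{x_{k-1}-x^\star}$ yields the $\cO(1/k)$ velocity bound. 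This is clean, avoids Gronwall, and obtains boundedness of the iterates earlier than the paper does (the paper only gets it in the proof of Theorem~\ref{conv-iterates}, and uses \ref{pr.decay_W_k:claim1} to do so, whereas your argument is free of that dependence).

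Claim \ref{pr.decay_W_k:claim2}, however, is not proved. You correctly observe that the pointwise bound cannot give summability, and you propose a parameterized family $E_k^\lambda$ mirroring Theorem~\ref{ACFR,rescale-algo}, but the argument stops at ``I expect a descent inequality of the form \dots with $c_1>0$'', and you yourself flag that verification as the main obstacle. That verification \emph{is} the content of the claim, and it is not routine: the computation in Theorem~\ref{pr.decay_E_k} hinges on the exact cancellation $v_{k+1}-v_k=-st_{k+1}\nabla f(y_k)$, which uses both $t_k-1=t_{k+1}\alpha_k$ and the identity $t_{k+1}\pa{1-\tfrac1k}-t_k=0$. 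Once the anchor coefficient is changed to $\lambda/(\alpha-1)<1$, the difference $v_{k+1}^\lambda-v_k^\lambda$ picks up an extra multiple of $x_k-x_{k-1}$; its inner product against $v_{k+1}^\lambda$ creates cross terms (velocity against the Hessian-damping gradient differences, velocity against the anchor) that must be recombined with the two weighted descent-lemma inequalities, and the sign analysis of the resulting quadratic form --- the step where $\beta<2\sqrt{s}$ enters --- must be redone. None of this is carried out, so there is a genuine gap.

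It is worth noting how cheaply the paper gets \ref{pr.decay_W_k:claim2}: no new Lyapunov function is needed. The recursion \eqref{basic10} already contains the non-negative term $(\alpha-1)(2k-\alpha-1)d_k$, of order $k\,d_k$ with $d_k=\tfrac{1}{2s}\norm{x_k-x_{k-1}}^2$. Once \ref{pr.decay_W_k:claim1} is available, the term $\tfrac{\beta}{\sqrt{s}}k\norm{x_k-x_{k-1}}g_k$ is summable by \eqref{sum_g_k}, the value terms telescope up to $\sum_k k\theta_k<+\infty$ (Theorem~\ref{pr.decay_E_k}\ref{pr.decay_E_k:claim2}), and \eqref{g_k_square} handles the rest; summing \eqref{basic10} then delivers $\sum_k k\,d_k<+\infty$. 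Since your argument for \ref{pr.decay_W_k:claim1} never produces an inequality in which $k\norm{x_k-x_{k-1}}^2$ appears with a favorable sign, you would still need to derive an energy inequality of the type \eqref{energy1}--\eqref{basic10} (or actually complete the $E_k^\lambda$ computation) before you can conclude.
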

\begin{proof}
From the reinforced descent lemma, see \eqref{eq:descentlemma}, we have
\begin{equation}\label{energy02}
f(y - s \nabla f (y)) +  \frac{1}{2s} \| y-x-s \nabla f (y) \|^2  \leq f (x)+\frac{1}{2s} \| y-x \|^2 .
\end{equation}
Evaluating \eqref{energy02} at $y=y_k$ and $x=x_k$, we obtain
\begin{equation}\label{energy1}
f(x_{k+1})+ \frac{1}{2s} \| x_{k+1} -x_{k} \|^2\leq f (x_k)  +\frac{1}{2s} \| y_k-x_k \|^2.
\end{equation}
Let us estimate this last term. According to the definition of $y_k$, and using the monotonicity of $\nabla f$, and Cauchy-Schwarz inequality, we obtain
\begin{align*}
&\| y_k-x_k \|^2 =   \| \alpha_k ( x_{k}  - x_{k-1}) -
\beta \sqrt{s}   \left( \nabla f (x_k)  - \nabla f (x_{k-1}) \right) -  \frac{\beta \sqrt{s}}{k}\nabla f( x_{k-1}) \|^2\\
&\leq \alpha_k ^2\|  x_{k}  - x_{k-1}\|^2 +
 \beta^2 s \| \nabla f (x_k)  - \nabla f (x_{k-1})  \|^2 +  \frac{\beta^2 s}{k^2}\|\nabla f( x_{k-1}) \|^2\\
&\quad + \frac{2\beta \sqrt{s}}{k}\|  x_{k}  - x_{k-1}\|\|\nabla f( x_{k-1}) \| + \frac{2\beta^2 s}{k}\| \nabla f (x_k)  - \nabla f (x_{k-1})  \| \|\nabla f( x_{k-1}) \|.
\end{align*}
To lighten notation, set $g_k \eqdef \|\nabla f (x_k) \|+\| \nabla f (x_{k-1}) \|$. We get for $k \geq 1$
\begin{align*}
\|y_k-x_k \|^2 
&\leq \alpha_k ^2\|  x_{k}  - x_{k-1}\|^2 + \beta^2 s g_k^2 +  \dfrac{\beta^2 s}{k^2}g_{k}^2 + \dfrac{2\beta \sqrt{s}}{k}\|  x_{k}  - x_{k-1}\|g_{k} + \dfrac{2\beta^2 s}{k}g_{k}^2\\
&\leq \alpha_k ^2\|  x_{k}  - x_{k-1}\|^2 + \dfrac{2\beta \sqrt{s}}{k}\|  x_{k}  - x_{k-1}\|g_{k} + 4\beta^2 s g_{k}^2.
\end{align*}
Plugging this into \eqref{energy1}, we obtain
\begin{equation*}
f(x_{k+1})+ \frac{1}{2s} \| x_{k+1} -x_{k} \|^2 \leq f (x_k)  +\frac{\alpha_k ^2}{2s} \|  x_{k}  - x_{k-1}\|^2 
+ \frac{\beta}{k\sqrt{s}}\|  x_{k}  - x_{k-1}\|g_{k} + 2\beta^2  g_{k}^2. 
\end{equation*}
Write for short $\theta_k \eqdef f (x_k)  -f(x^\star)$ and $d_k\eqdef \frac{1}{2s} \| x_{k} -x_{k-1} \|^2 $. Recalling $\alpha_k = \frac{k-\alpha}{k}$, we get
\begin{equation*}
\theta_{k+1} +  d_{k+1} \leq \theta_k + \frac{(k-\alpha)^2}{k^2} d_k + \frac{\beta}{k\sqrt{s}}\|  x_{k}  - x_{k-1}\|g_{k} + 2\beta^2  g_{k}^2 .
\end{equation*}
After multiplication by $k^2$ we obtain
\begin{equation*}
 k^2 d_{k+1} -(k-\alpha)^2 d_k \leq k^2 \left(\theta_k  -\theta_{k+1} \right) 
+ \frac{\beta}{\sqrt{s}}k\|x_{k} - x_{k-1} \|g_{k} + 2\beta^2   k^2 g_{k}^2.
\end{equation*}
Let us write the above expression in a recursive form
\begin{multline}
k^2 d_{k+1} - (k-1)^2 d_k  + (\alpha-1)(2k-\alpha -1)d_k  \leq (k-1)^2 \theta_k  - k^2\theta_{k+1} + (2k-1)\theta_{k} \\
+ \frac{\beta}{\sqrt{s}}k\|x_{k} - x_{k-1} \|g_{k} + 2\beta^2   k^2 g_{k}^2. \label{basic10}
\end{multline}
Thus, for $k \geq k_0=(\alpha+1)/2 > 2$, we have
\begin{multline*}
k^2 d_{k+1} - (k-1)^2 d_k  \leq (k-1)^2 \theta_k  - k^2\theta_{k+1} + 2k\theta_{k}
+ \frac{\beta}{\sqrt{s}}k\|x_{k} - x_{k-1} \|g_{k} + 2\beta^2   k^2 g_{k}^2.
\end{multline*}
Summing from $k=k_0$ to $K > k_0$, we obtain
\begin{multline*}
K^2 d_{K+1}  \leq (k_0-1)^2 d_{k_0} + (k_0-1)^2 \theta_{k_0} + 2\sum_{k=k_0}^K k\theta_{k} + 2\beta^2  \sum_{k=k_0}^K  k^2 g_{k}^2 \\ 
+ \frac{\beta}{\sqrt{s}}\sum_{k=k_0}^K k\|x_{k} - x_{k-1} \|g_{k} .
\end{multline*}
Thanks to the estimates obtained in items \ref{pr.decay_E_k:claim2} and \ref{pr.decay_E_k:claim3} of Theorem~\ref{pr.decay_E_k}, we have respectively
\begin{equation}\label{g_k_square}
\sum_k k\theta_k<+\infty \qandq \sum_k  k^2 g_k^2 < +\infty .
\end{equation} 
In turn, there exists a constant $C > 0$ such that 
\begin{eqnarray*}
(K  \|x_{K+1} - x_{K} \|)^2  \leq C + C\sum_{k=1}^K  g_k \bpa{k\|  x_{k}  - x_{k-1}) \|}.
\end{eqnarray*}
Observe that
\begin{equation}\label{sum_g_k}
\sum_{k}  g_{k} <+\infty .
\end{equation}
This follows from {Cauchy-Schwarz} inequality, square-summability of the sequence $\seq{\frac{1}{k}}$ and the summability claim Theorem~\ref{pr.decay_E_k}\ref{pr.decay_E_k:claim2}. 
Thus, applying the Gronwall's lemma, we obtain 
\begin{equation*}\label{est_velocity}
\sup_k  k\|  x_{k}  - x_{k-1} \|<+\infty.
\end{equation*}
as claimed in \ref{pr.decay_W_k:claim1}. To see that assertion \ref{pr.decay_W_k:claim2} also holds, 
return to \eqref{basic10}, but this time without discarding the term $(\alpha-1)(2k-\alpha -1)d_k$ which is non-negative for $k \geq k_0$ since $\alpha > 1$. Summing \eqref{basic10}, using the telescopic form, the summability properties \eqref{g_k_square} and that of $\seq{g_k}$ as well as claim \ref{pr.decay_W_k:claim1}, we conclude.
\end{proof}

\subsection{Convergence of the iterates}
We are now ready to prove convergence of the iterates of algorithm \IGAHD.
\begin{theorem}\label{conv-iterates}
Let  $f: \cH \to \R$ be a convex function whose gradient is $L$-Lipschitz continuous. Let $\seq{x_k}$ be a sequence generated by  {\IGAHD}, where $\alpha > 3$, $0 < \beta <  2 \sqrt{s}$ and $sL \leq 1$. Then  $\seq{x_k}$ converges weakly, and its limit belongs to $S$.
\end{theorem}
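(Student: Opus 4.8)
The plan is to apply Opial's Lemma~\ref{le.Opial}, for which two facts must be established: that every weak sequential cluster point of $\seq{x_k}$ lies in $S$, and that $\lim_{k}\norm{x_k - x^\star}$ exists for every $x^\star \in S$. The first is immediate from what is already in place. By Theorem~\ref{pr.decay_E_k}\ref{pr.decay_E_k:claim1} we have $f(x_k)\to\min_{\cH} f$, and since $f$ is convex and continuous, hence weakly sequentially lower semicontinuous, any weak limit $\bar x$ of a subsequence $x_{k_j}$ satisfies $f(\bar x)\leq\liminf_j f(x_{k_j})=\min_{\cH}f$, so $\bar x\in S$. The whole difficulty is therefore concentrated in the existence of the limit of the anchor functions $u_k\eqdef\norm{x_k-x^\star}^2$.

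For that step I would mirror the argument used in the proximal case (Theorem~\ref{thm:prox-conv-iter}) and in the continuous setting (Theorem~\ref{ACFR,rescale_convergence_continu}): introduce a one-parameter family of Lyapunov functions by inserting a parameter $\lambda$ in front of the anchor term $x_{k-1}-x^\star$ entering $v_k$ in \eqref{Lyap-function-grad2}, together with a compensating quadratic term $\frac{c}{2}\norm{x_{k-1}-x^\star}^2$ whose coefficient $c$ is chosen so that the computation of Theorem~\ref{pr.decay_E_k} goes through. Each member of this family is non-negative and non-increasing, hence converges; consequently so does the difference of the two members corresponding to two admissible values of $\lambda$ (the analogues of $\alpha-1$ and $\alpha-1-\varepsilon$). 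Computing this difference explicitly, the value contribution is controlled by $\sum_k k(f(x_k)-\min_{\cH}f)<+\infty$ (Theorem~\ref{pr.decay_E_k}\ref{pr.decay_E_k:claim2}), the velocity terms $k\norm{x_k-x_{k-1}}^2$ are summable while $k\norm{x_k-x_{k-1}}$ stays bounded (Theorem~\ref{pr.decay_W_k}), and the gradient contributions are tamed by $\sum_k k^2\norm{\nabla f(x_k)}^2<+\infty$ (Theorem~\ref{pr.decay_E_k}\ref{pr.decay_E_k:claim3}), which by Cauchy--Schwarz yields $\sum_k\norm{\nabla f(x_k)}<+\infty$ and $k\norm{\nabla f(x_k)}\to 0$.

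What survives in the difference is a convergent sequence $p_k$ built from $u_k$, the second-order increment $k(u_k-u_{k-1})$, and an anchor series with general term $\beta\sqrt{s}\dotp{\nabla f(x_k)}{x_k-x^\star}$. To close the loop one first checks this last series converges: its terms are non-negative by convexity and bounded by $\norm{\nabla f(x_k)}\norm{x_k-x^\star}$, and since the iterates are bounded (this follows from $E_k\leq E_0$ together with the velocity bound and $k\norm{\nabla f(x_k)}\to 0$), summability follows from $\sum_k\norm{\nabla f(x_k)}<+\infty$. Then, setting $q_k$ equal to $(\alpha-1)$ times $u_k$ plus the partial sums of that series, one rewrites $p_k = q_k + \frac{k}{\alpha-1}(q_k-q_{k-1}) - r_k$ with $r_k$ convergent, exactly as in \eqref{basic-formula-2-a}. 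Since $\alpha>1$, Lemma~\ref{le.conv-seq} then forces $q_k$ to converge, and therefore $u_k=\norm{x_k-x^\star}^2$ converges. With both Opial hypotheses verified, weak convergence of $\seq{x_k}$ to a point of $S$ follows.

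The main obstacle, I expect, is not any single estimate but the bookkeeping of the one-index shift specific to the gradient scheme: here the anchor point sits at $x_{k-1}$ (see \eqref{Lyap-function-grad2}) rather than at $x_k$, and the momentum is carried by $t_k=\frac{k}{\alpha-1}$, so matching the difference of the Lyapunov functions to the precise form $q_k+\frac{k}{\alpha-1}(q_k-q_{k-1})$ required by Lemma~\ref{le.conv-seq} demands careful reindexing of the velocity and gradient cross-terms $t_k\beta\sqrt{s}\dotp{\nabla f(x_{k-1})}{x_k-x^\star}$. An alternative route, suggested by rewriting \IGAHD as a perturbation of the plain Nesterov scheme, would recast it as a perturbed $\AVD{\alpha}$ iteration and invoke \cite{ACPR}; this is attractive because the velocity and gradient bounds of Theorems~\ref{pr.decay_E_k} and~\ref{pr.decay_W_k} are precisely the perturbation-summability conditions one needs, but fitting the perturbation $g_k$ exactly into that framework is delicate, so I would keep the self-contained Lyapunov-family argument as the primary proof.
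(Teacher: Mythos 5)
Your proposal is correct, but it follows a genuinely different route from the paper's own proof of Theorem~\ref{conv-iterates}. The paper does \emph{not} reuse the parametrized Lyapunov family for the gradient scheme: after establishing boundedness of $\seq{x_k}$ and the cluster-point condition exactly as you do, it works directly with the anchor sequence $h_k=\frac12\norm{x_k-x^\star}^2$. A three-point identity plus the cocoercivity of $\nabla f$ combined with $sL\leq 1$ (a tool you never invoke) yields a discrete second-order differential inequality; the non-summable difference $\nabla f(x_k)-\nabla f(x_{k-1})$ is absorbed by telescoping into $\theta_k=h_k-h_{k-1}+\beta\sqrt{s}\,\dotp{x_{k-1}-x^\star}{\nabla f(x_{k-1})}$, giving a recursion $\theta_{k+1}\leq\alpha_k\theta_k+Ce_k$ with $\sum_k ke_k<+\infty$, which is closed by taking positive parts and invoking Lemma~\ref{diff-ineq-disc}; summability of $\norm{\nabla f(x_{k-1})}$ then yields $\sum_k\brac{h_k-h_{k-1}}_+<+\infty$, hence convergence of $h_k$. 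Note that this is essentially the ``perturbed \AVD{\alpha} following \cite{ACPR}'' route that you sketch as an alternative and set aside; your primary argument instead transplants the mechanism of Theorem~\ref{thm:prox-conv-iter} and of the continuous case (family of Lyapunov functions, difference of two members, Lemma~\ref{le.conv-seq}). That transplant does go through: with $\lambda$ inserted in front of the anchor in \eqref{Lyap-function-grad2}, the increment $v_{k+1}-v_k$ picks up an extra term $-(1-\lambda)(x_k-x_{k-1})$, and all resulting cross terms are summable by Cauchy--Schwarz from Theorems~\ref{pr.decay_E_k}\ref{pr.decay_E_k:claim3} and \ref{pr.decay_W_k}\ref{pr.decay_W_k:claim2}; moreover, the summability of the anchor series $\sum_k\dotp{\nabla f(x_k)}{x_k-x^\star}$ is indeed immediate here (boundedness of the iterates plus $\sum_k\norm{\nabla f(x_k)}<+\infty$), which is genuinely simpler than in the proximal case, where it required a separate $\rho$-splitting in the Lyapunov estimate. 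Two small corrections to your write-up: the family members are non-increasing only up to summable errors (which still forces convergence, since they are non-negative), not exactly non-increasing as claimed; and in the gradient scaling the two admissible parameter values are $1$ and $1-\varepsilon$, not $\alpha-1$ and $\alpha-1-\varepsilon$, since the momentum is carried by $t_k$ rather than by $k$. As for what each approach buys: the paper's argument avoids redoing the Lyapunov decay computation with a second parameter and exploits cocoercivity, which is specific to the explicit gradient step; yours unifies the continuous, proximal and gradient settings under a single Lyapunov-family mechanism, at the price of heavier bookkeeping.
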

\begin{proof}
Observe first that the sequence $\seq{x_k}$ is bounded in $\cH$. This is a direct consequence of Theorem~\ref{pr.decay_E_k} and Theorem~\ref{pr.decay_W_k}. Indeed, the convergence of the sequence  $\seq{E_k}$ from Theorem~\ref{pr.decay_E_k} implies that the sequence $\seq{v_k}$ in \eqref{Lyap-function-grad2} is bounded in $\cH$. From Theorem~\ref{pr.decay_E_k}\ref{pr.decay_E_k:claim3}, we know that $t_k \nabla f(x_k) \to 0$ as $k \to \infty$. On the other hand, we infer from Theorem~\ref{pr.decay_W_k}\ref{pr.decay_W_k:claim1} that $\seq{t_k(x_k-x_{k-1})}$ is bounded in $\cH$. It then follows from the definition of $v_k$ that the sequence $\seq{x_k}$ is bounded.

The rest of the proof now relies on Opial's Lemma~\ref{le.Opial} with $S=\argmin f$. By Theorem~\ref{pr.decay_E_k}\ref{pr.decay_E_k:claim1}, we have $f(x_k) \to \min_{\cH} f$. The weak lower semicontinuity of $f$ then gives item \ref{le.Opial:cond1} of Opial's Lemma. Thus, the only point to verify is that $\lim_{k\to\infty} \|x_k-x^\star\|$ exists for any $x^\star \in S$. Denote for short the anchor sequence $h_k \eqdef \frac{1}{2} \|x_k  - x^\star\|^2$.
Inspired by the continuous case, the idea of the proof consists in establishing a discrete second-order differential inequality satisfied by $\seq{h_k}$. We use the three-point identity
\[
\frac{1}{2} \|a-b\|^2 + \frac{1}{2} \|a-c\|^2 =
\frac{1}{2} \|b -c\|^2  + \left\dotp{a-b}{a-c \right},
\]
which holds for any  $a,b,c \in \cH$. Applying this identity at $b= x^\star$, $a= x_{k+1}$, $c=x_k$, we obtain
\begin{equation} \label{E:hk_k+1}
h_k - h_{k+1} = \frac{1}{2} \|x_{k+1}-x_k  \|^2  + \left\dotp{x_{k+1}-x^\star}{x_k - x_{k+1}\right} .
\end{equation} 
By  definition of $y_k$, we have 
\[
x_k - x_{k+1}= y_k - x_{k+1} - \alpha_k (x_k - x_{k-1}) + \beta \sqrt{s}   \left( \nabla f (x_k)  - \nabla f (x_{k-1}) \right) + \frac{\beta \sqrt{s}}{k}\nabla f( x_{k-1}).
\]
Therefore,
\begin{multline*}
h_k - h_{k+1} = \frac{1}{2} \|x_{k+1}-x_k  \|^2  + \left\dotp{x_{k+1}-x^\star}{y_k - x_{k+1}\right} -\alpha_k \left\dotp{x_{k+1}-x^\star}{x_k - x_{k-1}\right} \\
+ \left\dotp{x_{k+1}-x^\star}{\beta \sqrt{s} \pa{\nabla f (x_k)  - \nabla f (x_{k-1})} + \frac{\beta \sqrt{s}}{k}\nabla f( x_{k-1})\right} .
\end{multline*}
Since $\nabla f (x^\star)=0$ and $y_k - x_{k+1} = s \nabla f (y_k)$, we deduce that
\begin{multline}
h_{k+1}- h_k  +\frac{1}{2} \|x_{k+1}-x_k  \|^2   \\
+ s \dotp{ \nabla f (y_k) -\nabla f (x^\star)}{x_{k+1} -  x^\star}-\alpha_k \dotp{x_{k+1}-x^\star}{x_k - x_{k-1}}  \\
+ \left\dotp{x_{k+1}-x^\star}{\beta \sqrt{s}  \pa{\nabla f (x_k)  - \nabla f (x_{k-1})} + \frac{\beta \sqrt{s}}{k}\nabla f( x_{k-1})\right}  = 0 .\label{algo-conv16}
\end{multline}
On the other hand, the cocoercivity of $\nabla f$ and the hypothesis $sL\leq 1$ give
\begin{align}\label{algo-conv17}
&\dotp{ \nabla f (y_k) -\nabla f (x^\star)}{x_{k+1} -  x^\star} \nonumber \\
&=\dotp{ \nabla f (y_k) -\nabla f (x^\star)}{y_k  -x^\star} +\dotp{ \nabla f (y_k) -\nabla f(x^\star)}{x_{k+1} -  y_k} \nonumber\\
&\geq \frac{1}{L} \|\nabla f (y_k) -\nabla f (x^\star) \| ^2 +  \dotp{\nabla f(y_k) -\nabla f (x^\star)}{x_{k+1} -  y_k} \nonumber \\
&\geq s \|  \nabla f (y_k)  \| ^2 - s \|  \nabla f (y_k)\|^2 =0.
\end{align}
Injecting \eqref{algo-conv17} into \eqref{algo-conv16}, we get that
\begin{multline}
h_{k+1}- h_k  +\frac{1}{2} \|x_{k+1}-x_k  \|^2- \alpha_k \dotp{x_{k+1}-x^\star}{x_k - x_{k-1}}  \\
+ \left\dotp{x_{k+1}-x^\star}{\beta \sqrt{s}  \pa{\nabla f (x_k)  - \nabla f (x_{k-1})} + \frac{\beta \sqrt{s}}{k}\nabla f( x_{k-1})\right}  \leq 0 .\label{algo-conv18}
\end{multline}
By replacing $k$ by $k-1$ in \eqref{E:hk_k+1}, we obtain
\begin{equation}\label{algo-paral-3}
h_{k-1}-h_{k}  = \frac{1}{2} \|x_{k}-x_{k -1}  \|^2 - \dotp{x_{k}-x^\star}{x_{k}-x_{k -1}} .
\end{equation} 
By combining \eqref{algo-conv18} with \eqref{algo-paral-3}, we deduce that
\begin{multline}
h_{k+1}- h_k  - \alpha_k \pa{h_{k} - h_{k-1}} \leq 
 -\frac{1}{2} \|x_{k+1}-x_k  \|^2   \\
+\alpha_k \pa{\frac{1}{2} \|x_k -x_{k -1}  \|^2 + \dotp{x_k - x_{k-1}}{x_{k+1}-x_k}} \\
-\left\dotp{x_{k+1}-x^\star}{\beta \sqrt{s}\pa{\nabla f (x_k)  - \nabla f (x_{k-1})} + \frac{\beta \sqrt{s}}{k}\nabla f( x_{k-1})\right} .\label{algo-conv19}
\end{multline}
To simplify the exposition, let us use the generic terminology $e_k$ for the positive real terms which  are negligible with respect to the convergence property, and $C$ for the constants (independent of $k$). According to \cite{ACPR}, these are the terms that satisfy the summability property
\[
\sum_{k \in \N} k e_k < +\infty.
\]
This is the case of the term 
$|\dotp{x_{k+1}-x^\star}{\frac{\beta \sqrt{s}}{k}\nabla f( x_{k-1})}|$, since $\seq{x_k}$ is bounded as argued above, and 
$
k\|\frac{1}{k}\nabla f( x_{k-1})\| = \frac{1}{k}  \pa{k\|\nabla f( x_{k-1})\|}$, which is summable as the product of the two square summable sequences $\seq{\frac{1}{k}}$ and $\seq{k\nabla f( x_{k-1})}$ (the latter follows from Theorem~\ref{pr.decay_E_k}\ref{pr.decay_E_k:claim3}). 
Consequently, \eqref{algo-conv19} becomes
\begin{multline}
h_{k+1}- h_k  - \alpha_k \pa{h_{k} - h_{k-1}} \leq 
 -\frac{1}{2} \|x_{k+1}-x_k  \|^2   \\
+\alpha_k \pa{\frac{1}{2} \|x_k -x_{k -1}  \|^2 + \dotp{x_k - x_{k-1}}{x_{k+1}-x_k}} \\
-\beta \sqrt{s} \dotp{x_{k+1}-x^\star}{\nabla f (x_k)  - \nabla f (x_{k-1})} + C e_k. \label{algo-conv20}
\end{multline} 
By contrast, the term $\nabla f (x_k)  - \nabla f (x_{k-1}) $ is not negligible. Therefore, it will be treated as the difference of two consecutive terms, and will then handled easily. To see this, let us write 
\begin{align*}
&\dotp{x_{k+1}-x^\star}{\nabla f (x_k) - \nabla f (x_{k-1})} = \dotp{x_{k+1}-x^\star}{\nabla f (x_k)} - \dotp{x_{k+1}-x^\star}{\nabla f (x_{k-1})} \\
&= \dotp{x_{k}-x^\star}{\nabla f (x_k)} - \dotp{x_{k-1}-x^\star}{\nabla f (x_{k-1})}  \\
&\quad + \dotp{x_{k+1}-x_{k}}{\nabla f (x_k)}  
-\dotp{x_{k+1}-x_{k-1}}{\nabla f (x_{k-1})}  \\
&= \dotp{x_{k}-x^\star}{\nabla f (x_k)} - \dotp{x_{k-1}-x^\star}{\nabla f (x_{k-1})} + e_k.
\end{align*}
We have used that $\sum_{k \in \N} k \| x_{k+1}-x_{k}\| \|    \nabla f (x_k) \|<+\infty$, a consequence of {Cauchy-Schwarz} inequality and $\sum_{k \in \N} k \| x_{k+1}-x_{k}\|^2 <+\infty$ (Theorem~\ref{pr.decay_W_k}\ref{pr.decay_W_k:claim2}) and  $\sum_{k \in \N} k \|  \nabla f (x_k) \|^2 <+\infty$ (Theorem~\ref{pr.decay_E_k}\ref{pr.decay_E_k:claim3}). The same reasoning holds for $\sum_{k \in \N} k\| x_{k+1}-x_{k-1}\| \| \nabla f (x_{k-1}) \|<+\infty$.
Therefore
\begin{multline*}
h_{k+1}- h_k  - \alpha_k \pa{h_{k} - h_{k-1}} 
+\beta \sqrt{s}\dotp{x_{k}-x^\star}{\nabla f (x_k)} - \beta \sqrt{s}\dotp{x_{k-1}-x^\star}{\nabla f (x_{k-1})} \\
\leq -\frac{1}{2} \|x_{k+1}-x_k  \|^2  
 +\alpha_k \pa{\frac{1}{2} \|x_k -x_{k -1}  \|^2 + \dotp{x_k - x_{k-1}}{x_{k+1}-x_k}} +C  e_k. 
\end{multline*} 
Using again the estimation on the velocities (Theorem~\ref{pr.decay_W_k}\ref{pr.decay_W_k:claim2}), the second term in the right hand side of the last inequality is again negligible, which gives
\begin{equation}\label{algo-conv21}
 h_{k+1}- h_k  - \alpha_k \left(  h_{k} -h_{k-1}\right)
+\beta \sqrt{s}\left\dotp{x_{k}-x^\star}{\nabla f (x_k)  \right} -\beta \sqrt{s} \left\dotp{x_{k-1}-x^\star}{\nabla f (x_{k-1}) \right}  \leq C e_k.
\end{equation}
Set
$
\theta_k \eqdef  h_{k} -h_{k-1} + \beta \sqrt{s} \left\dotp{x_{k-1}-x^\star}{\nabla f (x_{k-1}) \right}.
$
From \eqref{algo-conv21} we infer that
\[
\theta_{k+1} -\alpha_k  \theta_k \leq \frac{\alpha}{k}\beta \sqrt{s} \left\dotp{x_{k-1}-x^\star}{\nabla f (x_{k-1}) \right} + C  e_k
\]
By the same argument as above we deduce that
\[
\theta_{k+1} - \alpha_k  \theta_k \leq  C  e_k .
\]
The proof now follows the line of \cite{ACPR}. Taking the positive part we arrive at
\[
\brac{\theta_{k+1}}_+ \leq \alpha_k \brac{\theta_k}_+  + C  e_k .
\] 
Applying Lemma~\ref{diff-ineq-disc} with $a_k = \brac{\theta_k}_+$, we obtain
\begin{center}
$\sum_{k \in \N} \brac{h_{k} -h_{k-1}+ \beta \sqrt{s} \dotp{x_{k-1}-x^\star}{\nabla f (x_{k-1})}}_
+ < +\infty.$
\end{center}
Since $\seq{\| \nabla f (x_{k-1}) \|}$ is summable (see \eqref{sum_g_k}), we deduce that $\sum_{k }\brac{h_{k} -h_{k-1}}_+ < +\infty$, which implies that the limit of the sequence $\seq{h_k}$ exists.
Condition~\ref{le.Opial:cond2} of Lemma~\ref{le.Opial} is then verified which concludes the proof.
\end{proof}
%
\subsection{From $\cO\pa{\frac{1}{k^2}}$ to ${\rm o}\pa{\frac{1}{k^2}}$ rates}
We will now establish an even faster asymptotic rate of convergence of the values and velocities. 
\begin{theorem}\label{small_o}
Let  $f: \cH \to \R$ be a convex function whose gradient is $L$-Lipschitz continuous. Let $\seq{x_k}$ be a sequence generated by  {\IGAHD}, where $\alpha > 3$, $0 < \beta <  2 \sqrt{s}$ and $sL \leq 1$. Then
\begin{enumerate}[label={\rm (\roman*)}]
\item $f(x_k)-\min_{\cH} f = {\rm o}\pa{\dfrac{1}{k^2}}$ as $k\to +\infty$; \label{small_o:claim1}
\item $\norm{x_{k}  - x_{k-1}} = {\rm o}\pa{\dfrac{1}{k^2}}$ as $k\to +\infty$. \label{small_o:claim2}
\end{enumerate}
\end{theorem}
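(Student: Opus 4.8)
The plan is to transpose to the discrete setting the argument behind the continuous estimate of Theorem~\ref{ACFR,rescale}\ref{ACFR,rescale:claim8}, using a single Lyapunov sequence that controls the values and the velocities at the same time. I would introduce the core quantity
\[
L_k \eqdef \frac{t_k^2}{2s}\norm{x_k - x_{k-1}}^2 + t_k^2\pa{f(x_k) - \min_{\cH} f}
\]
together with the corrected energy (discrete analogue of the function $B(\cdot)$ in the proof of Theorem~\ref{ACFR,rescale}\ref{ACFR,rescale:claim8})
\[
W_k \eqdef L_k + \frac{\alpha-1}{s}\sum_{j=j_0}^{k} t_j\norm{x_j - x_{j-1}}^2 - \sum_{j=j_0}^{k} \pa{t_j^2 - t_{j-1}^2}\pa{f(x_j) - \min_{\cH} f},
\]
the two correction sums playing the role of the integrals $(\alpha-1)\int \tau\norm{\dot x}^2\,d\tau$ and $\int(f-\min_{\cH} f)\frac{d}{d\tau}(\tau^2 b)\,d\tau$, with $j_0$ taken large enough.

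First I would show that $\seq{W_k}$ is non-increasing. This is the discrete counterpart of taking the inner product of the dynamic with $t^2\dot x(t)$: starting from a per-step estimate of the type \eqref{basic-est-1}, one multiplies by the appropriate power of $t_{k+1}$, invokes the reinforced descent lemma \eqref{eq:descentlemma} and the monotonicity of $\nabla f$, and regroups the telescoping terms; the weights of the two correction sums are tuned precisely so that the increments $\frac{\alpha-1}{s}t_{k+1}\norm{x_{k+1}-x_k}^2$ and $(t_{k+1}^2 - t_k^2)(f(x_{k+1})-\min_{\cH} f)$ cancel the residual terms created by the discrete chain rule. Because $\alpha>3$, Theorem~\ref{pr.decay_E_k}\ref{pr.decay_E_k:claim2} and Theorem~\ref{pr.decay_W_k}\ref{pr.decay_W_k:claim2} give $\sum_j t_j\pa{f(x_j)-\min_{\cH} f}<+\infty$ and $\sum_j t_j\norm{x_j-x_{j-1}}^2<+\infty$; hence both correction sums converge, so $W_k$ is bounded from below. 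A non-increasing sequence bounded from below converges, and since the two sums converge, the core quantity $L_k$ converges to some $\ell\geq 0$.

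It then remains to force $\ell=0$. I would argue by contradiction: if $\ell>0$ then $L_k\geq \ell/2$ for $k$ large, and dividing by $t_k\sim \frac{k}{\alpha-1}$ gives
\[
\frac{t_k}{2s}\norm{x_k - x_{k-1}}^2 + t_k\pa{f(x_k)-\min_{\cH} f}\;\geq\;\frac{\ell}{2t_k}\;\geq\;\frac{c}{k}
\]
for some $c>0$ and all $k$ large. Summing the left-hand side over $k$ produces a finite quantity, by the two summability properties just recalled, whereas $\sum_k c/k=+\infty$: a contradiction. Hence $\ell=0$, which yields simultaneously $t_k^2\pa{f(x_k)-\min_{\cH} f}\to 0$, that is claim~\ref{small_o:claim1}, and $t_k^2\norm{x_k-x_{k-1}}^2\to 0$.

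The step I expect to be the main obstacle is the monotonicity of $W_k$: unlike the continuous setting where the chain rule is exact, the discrete increments of $t_k^2\norm{x_k-x_{k-1}}^2$ and of $t_k^2(f(x_k)-\min_{\cH} f)$ generate cross terms — notably the mixed product $\dotp{\nabla f(y_k)}{\nabla f(x_k)}$ already encountered in the proof of Theorem~\ref{pr.decay_E_k} — whose sign must be controlled, and this is exactly where the condition $\beta<2\sqrt s$ re-enters. As for claim~\ref{small_o:claim2}, the convergence $L_k\to 0$ delivers $\norm{x_k-x_{k-1}}={\rm o}(1/t_k)={\rm o}(1/k)$, the faithful discrete image of the continuous velocity estimate $\norm{\dot x(t)}={\rm o}(1/t)$ of Theorem~\ref{ACFR,rescale}\ref{ACFR,rescale:claim8}. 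I note that a rate ${\rm o}(1/k^2)$ for the increments cannot hold in general, since it would make $\seq{\norm{x_k-x_{k-1}}}$ summable and hence $\seq{x_k}$ strongly convergent, in conflict with the weak-only conclusion of Theorem~\ref{conv-iterates}; the exponent in claim~\ref{small_o:claim2} should therefore be read as ${\rm o}(1/k)$.
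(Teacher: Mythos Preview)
Your strategy is the paper's: the core quantity $L_k$ is, up to the harmless rescaling $t_k\sim k/(\alpha-1)$, exactly the paper's $W_k\eqdef(k-1)^2 d_k+(k-1)^2\theta_k$ (with $d_k=\frac{1}{2s}\norm{x_k-x_{k-1}}^2$, $\theta_k=f(x_k)-\min_\cH f$), and the final contradiction argument is identical. The only difference is in execution: rather than building an \emph{exactly} non-increasing corrected energy with two running sums, the paper simply rearranges the inequality \eqref{basic10}---already obtained in the proof of Theorem~\ref{pr.decay_W_k}---into the quasi-monotonicity
\[
W_{k+1}\le W_k+e_k,\qquad e_k=2k\theta_k+\tfrac{\beta}{\sqrt s}\,k\norm{x_k-x_{k-1}}g_k+2\beta^2 k^2 g_k^2,
\]
with $\seq{e_k}$ summable by \eqref{g_k_square}, \eqref{sum_g_k} and Theorem~\ref{pr.decay_W_k}\ref{pr.decay_W_k:claim1}. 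This bypasses the obstacle you anticipate, since \eqref{basic10} has already absorbed the $\dotp{\nabla f(y_k)}{\nabla f(x_k)}$ cross terms; your route would also work, but reusing \eqref{basic10} is shorter.

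Your remark on claim~\ref{small_o:claim2} is correct: both proofs deliver $k^2\norm{x_k-x_{k-1}}^2\to 0$, i.e.\ $\norm{x_k-x_{k-1}}={\rm o}(1/k)$, the discrete counterpart of $\norm{\dot x(t)}={\rm o}(1/t)$; the exponent ${\rm o}(1/k^2)$ in the statement is a typo.
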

\begin{proof}
Let us embark from \eqref{basic10} and recall the notations there. Let us define $W_k \eqdef (k-1)^2 d_k + (k-1)^2 \theta_k$. We then have for $k \geq (\alpha+1)/2$,
\[
W_{k+1} \leq W_k +e_k, \text{ where } e_k = 2k\theta_k + \frac{\beta}{\sqrt{s}}k\norm{x_k-x_{k-1}}g_k + 2\beta^2k^2g_k^2.
\] 
Thanks to \eqref{g_k_square}, \eqref{sum_g_k} and Theorem~\ref{pr.decay_W_k}\ref{pr.decay_W_k:claim1}, we have $\sum_{k \in \N} e_k < +\infty$.

It follows that the limit of $W_k$ exists as $k \to +\infty$. This limit $\ell$ is necessarily equal to zero, otherwise, for $k$ sufficiently large, $W_k \geq \frac{\ell}{2}$, which gives
\[
(k-1) d_k + (k-1) \theta_k \geq \frac{l}{2k}.
\]
This gives a clear contradiction with the summability of the left hand side of the above inequality, as given by Theorems~\ref{pr.decay_E_k}\ref{pr.decay_E_k:claim2} and \ref{pr.decay_W_k}\ref{pr.decay_W_k:claim2}. This concludes the proof.
\end{proof}

\section{Application}\label{sec:numerics}
%
To illustrate our results, let us consider the regularized least-squares problem \eqref{eq:minP} 
\begin{equation}\label{eq:minP}\tag{RLS}
\min_{x \in \R^n} \left\{ f(x) \eqdef \frac{1}{2}\norm{b-Ax}^2 + g(x) \right\} ,
\end{equation}
on $\cH=\R^n$, where $A$ is a linear operator from $\R^n$ to $\R^m$, $m \leq n$, $g: \R^n \to \Rb$ is a proper lower semicontinuous and convex function which acts as a regularizer.~\eqref{eq:minP} occurs in a variety of fields ranging from inverse problems in signal/image processing, to machine learning and statistics. Typical examples for $g$ include the $\ell_1$ norm (Lasso), the $\ell_1-\ell_2$ norm (group Lasso), the total variation, or the nuclear norm. We assume that the set of minimizers of~\eqref{eq:minP} is non-empty.\\
Following \cite{ACFR}, the key idea is to work with an  appropriate metric. For a symmetric positive definite matrix $S \in \R^{n \times n}$, denote by $\norm{\cdot}_S$  the norm which is associated with the  scalar product  $\dotp{S\cdot}{\cdot}$. 
For a proper convex lsc function $h$,  denote $h_S$ and $\prox_h^S$ its Moreau envelope and proximal mapping in the metric $S$, 
\[
h_S(x) = \min_{z \in \R^n} \frac{1}{2}\norm{z-x}_S^2 + h(z), \quad \prox^S_{h}(x) = \argmin_{z \in \R^n} \frac{1}{2}\norm{z-x}_S^2 + h(z) .
\] 
Let $M=\lambda^{-1}I-A^*A$. With the proviso that $0 < \lambda\norm{A}^2 < 1$, $M$ is a symmetric positive definite matrix. It can be easily shown, see \cite[Appendix]{ACPR}, that 
\[
\prox_{f}^M (x) = \prox_{\lambda g}(x+\lambda A^*(b-x)) ,
\]
and that $f_M$ is a continuously differentiable convex function whose gradient in the metric $M$ is given by the formula
\[
\nabla f_M(x) = x - \prox_{f}^M (x) = x - \prox_{\lambda g}(x + \lambda A^*(b-Ax)) .
\]
Moreover, $\norm{\nabla f_M(x)-\nabla f_M(z)}_M \leq \norm{x-z}_M$, \ie $\nabla f_M$ is Lipschitz continuous in the metric $M$. In addition, a standard argument shows that 
$
\argmin(f) =  \argmin(f_M) .
$
We are then in position to solve \eqref{eq:minP} by simply applying \IGAHD to $f_M$.
We obtain the following algorithm: \\
\renewcommand{\algorithmcfname}{$\rm(IGAHD-RLS)$}
\begin{algorithm}[H]
\caption{}
Initialization: $x_0, x_1 \in \R^n$ given\;
\For{$k=1,\ldots $}{
$z_k= x_k-\prox_{\lambda g}(x_k+\lambda A^*(b-Ax_k))$ \;  
$y_k=   x_{k} + (1- \frac{\alpha}{k}) ( x_{k}  - x_{k-1}) - \beta \sqrt{s} (z_k - z_{k-1}) -\dfrac{\beta \sqrt{s}}{k} z_k$ \;
$x_{k+1}= (1-s)y_{k} + s\prox_{\lambda g}(y_k+\lambda A^*(b-Ay_k))$.
}
\end{algorithm}

We infer from Theorems~\ref{pr.decay_E_k}, \ref{pr.decay_W_k}, and \ref{small_o} the following convergence certificate of algorithm $\rm(IGAHD-RLS)$.
\begin{theorem}\label{Lasso-thm}
Consider algorithm {\rm (IGAHD-RLS)} applied with $\alpha > 3$, $0 < \lambda\norm{A}^2 < 1$, $0 < \beta <  2 \sqrt{s}$ and $s \leq 1$. Then for any sequence  $\seq{x_k}$ generated by the algorithm {\rm (IGAHD-RLS)}, the following properties hold:
\begin{enumerate}[label={\rm (\roman*)}]
\item $f(\prox_{f}^M (x_k))-\min_{\cH} f = {\rm o}(k^{-2})$ and $\sum_{k \in \N}  k^2 \| \nabla f (x_k) \|^2 < +\infty$.
\item The sequence $\seq{x_k}$ converges to a solution of \eqref{eq:minP}.
\end{enumerate}
\end{theorem}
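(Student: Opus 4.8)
The strategy is to recognise that {\rm (IGAHD-RLS)} is nothing but \IGAHD applied to the Moreau envelope $f_M$ in the Hilbert space $\pa{\R^n,\dotp{M\cdot}{\cdot}}$, and then to invoke Theorems~\ref{pr.decay_E_k}, \ref{pr.decay_W_k}, \ref{conv-iterates} and \ref{small_o} verbatim in this metric. First I would carry out the algorithmic identification: the quantity $z_k = x_k - \prox_{\lambda g}(x_k+\lambda A^*(b-Ax_k))$ computed at each iteration is exactly $\nabla f_M(x_k)$ by the formula recalled just above, so the extrapolation defining $y_k$ coincides with that of \IGAHD once one sets $z_j = \nabla f_M(x_j)$. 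For the last line, using $\nabla f_M(y) = y - \prox_f^M(y)$ together with $\prox_f^M(y)=\prox_{\lambda g}(y+\lambda A^*(b-Ay))$, the gradient step $x_{k+1} = y_k - s\nabla f_M(y_k)$ becomes precisely $x_{k+1} = (1-s)y_k + s\,\prox_{\lambda g}(y_k+\lambda A^*(b-Ay_k))$, which is the update of {\rm (IGAHD-RLS)}.

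Next I would check the hypotheses. Since $M$ is symmetric positive definite (guaranteed by $0<\lambda\norm{A}^2<1$), $\pa{\R^n,\dotp{M\cdot}{\cdot}}$ is a genuine finite-dimensional Hilbert space. The function $f_M$ is convex and of class $\cC^1$, and its $M$-gradient satisfies $\norm{\nabla f_M(x)-\nabla f_M(z)}_M \leq \norm{x-z}_M$; hence $f_M$ has an $L$-Lipschitz gradient in the metric $M$ with $L=1$, so the step-size requirement $sL\leq 1$ reduces exactly to the standing assumption $s\leq 1$. Together with $\alpha>3$ and $0<\beta<2\sqrt{s}$, all the assumptions of Theorems~\ref{pr.decay_E_k}, \ref{pr.decay_W_k}, \ref{conv-iterates} and \ref{small_o} are then met for $f_M$.

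It remains to transfer the conclusions from $f_M$ back to the original problem. Theorem~\ref{small_o}\ref{small_o:claim1} gives $f_M(x_k)-\min_{\cH} f_M = {\rm o}(k^{-2})$. Using $\argmin f = \argmin f_M$ (hence $\min_{\cH} f = \min_{\cH} f_M$, since $\prox_f^M(x^\star)=x^\star$ at any minimizer) together with the Moreau identity $f_M(x_k) = f(\prox_f^M(x_k)) + \frac{1}{2}\norm{x_k-\prox_f^M(x_k)}_M^2$, I obtain
\[
0 \leq f(\prox_f^M(x_k)) - \min_{\cH} f \leq f_M(x_k) - \min_{\cH} f_M = {\rm o}(k^{-2}),
\]
which is the value rate in claim~(i). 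The summability $\sum_k k^2\norm{\nabla f(x_k)}^2<+\infty$, with $\nabla f(x_k)$ read as $\nabla f_M(x_k)=z_k$, is Theorem~\ref{pr.decay_E_k}\ref{pr.decay_E_k:claim3} applied to $f_M$, the choice of metric being irrelevant by equivalence of norms on $\R^n$. Finally, Theorem~\ref{conv-iterates} yields weak convergence of $\seq{x_k}$ to a point of $\argmin f_M = \argmin f$; since $\cH=\R^n$ is finite-dimensional, weak and strong convergence coincide, which establishes claim~(ii).

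The only genuinely delicate point is the change of metric: every estimate of the preceding sections must be interpreted with respect to $\dotp{M\cdot}{\cdot}$, and one must make sure that the gradient driving the iteration is the $M$-gradient $\nabla f_M(x)=x-\prox_f^M(x)$ and that its Lipschitz constant in that metric is \emph{exactly} $1$ (this is the firm nonexpansiveness of the proximal map, equivalently the $1$-Lipschitz gradient of the Moreau envelope). Once this identification is secured, the remainder is a direct, if careful, transcription of the four preceding theorems.
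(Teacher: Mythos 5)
Your proposal is correct and follows exactly the paper's route: the paper likewise obtains Theorem~\ref{Lasso-thm} by recognizing {\rm (IGAHD-RLS)} as \IGAHD applied to $f_M$ in the metric $\dotp{M\cdot}{\cdot}$ (where the $1$-Lipschitz property of $\nabla f_M$ turns $sL\leq 1$ into $s\leq 1$) and then invoking the convergence theorems of Section~\ref{sec:gradient}. If anything, your write-up is more careful than the paper's one-line justification, since you explicitly cite Theorem~\ref{conv-iterates} for claim (ii) (which the paper omits from its list) and spell out the Moreau-identity step and the finite-dimensional equivalence of weak and strong convergence.
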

%

\IGAHD and FISTA (\ie \IGAHD with $\beta=0$) were applied to $f_M$ with two instances of $g$: $\ell_1$ norm (for sparse vector recovery) and the nuclear norm (for low-rank matrices recovery). The results are depicted in Figure~\ref{fig:rls}. We display both the evolution of the objective values and the distance to the set of minimizers, for different values of the damping parameter $\alpha > 3$. \IGAHD exhibits less oscillations than FISTA, and eventually converges faster both on the values and iterates.

\begin{figure}
  \includegraphics[width=\textwidth]{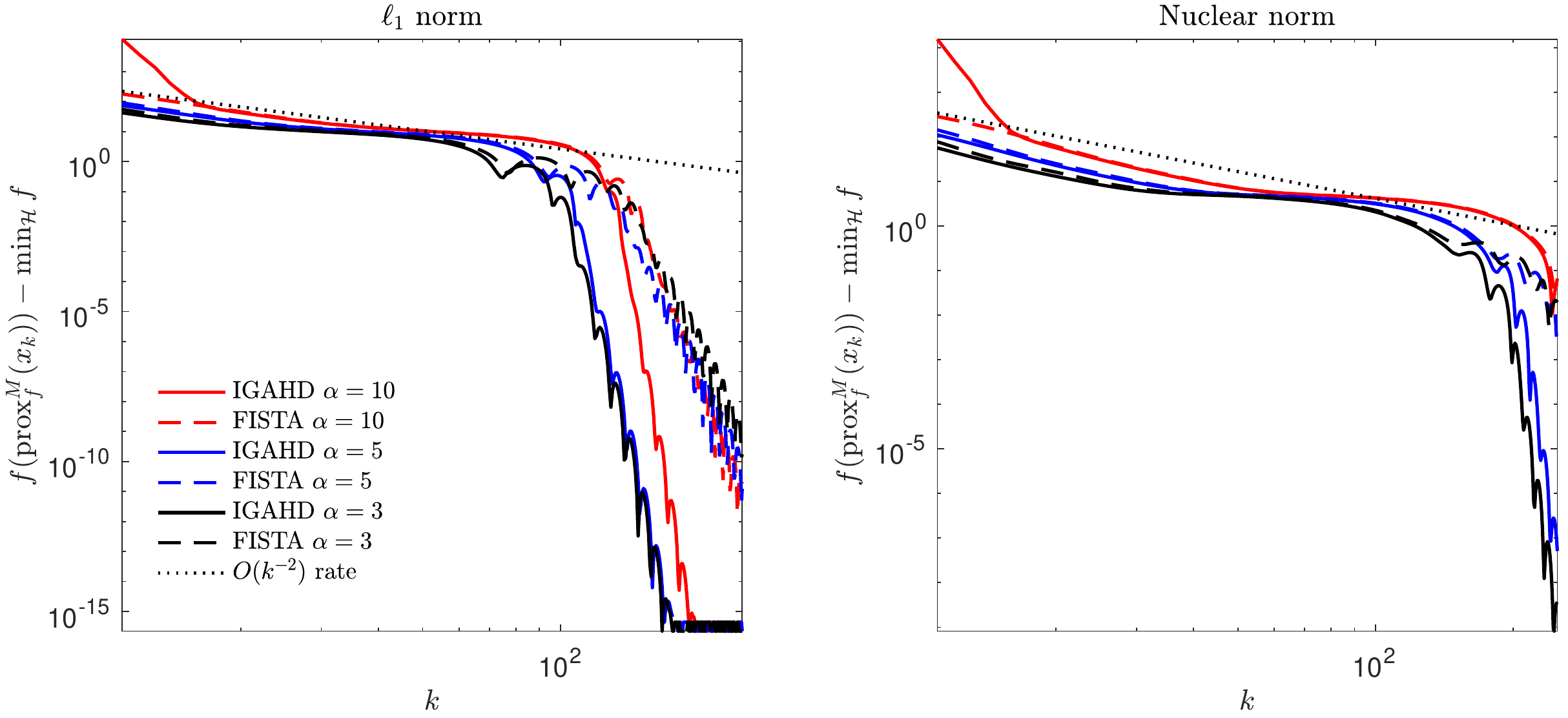}
  \includegraphics[width=\textwidth]{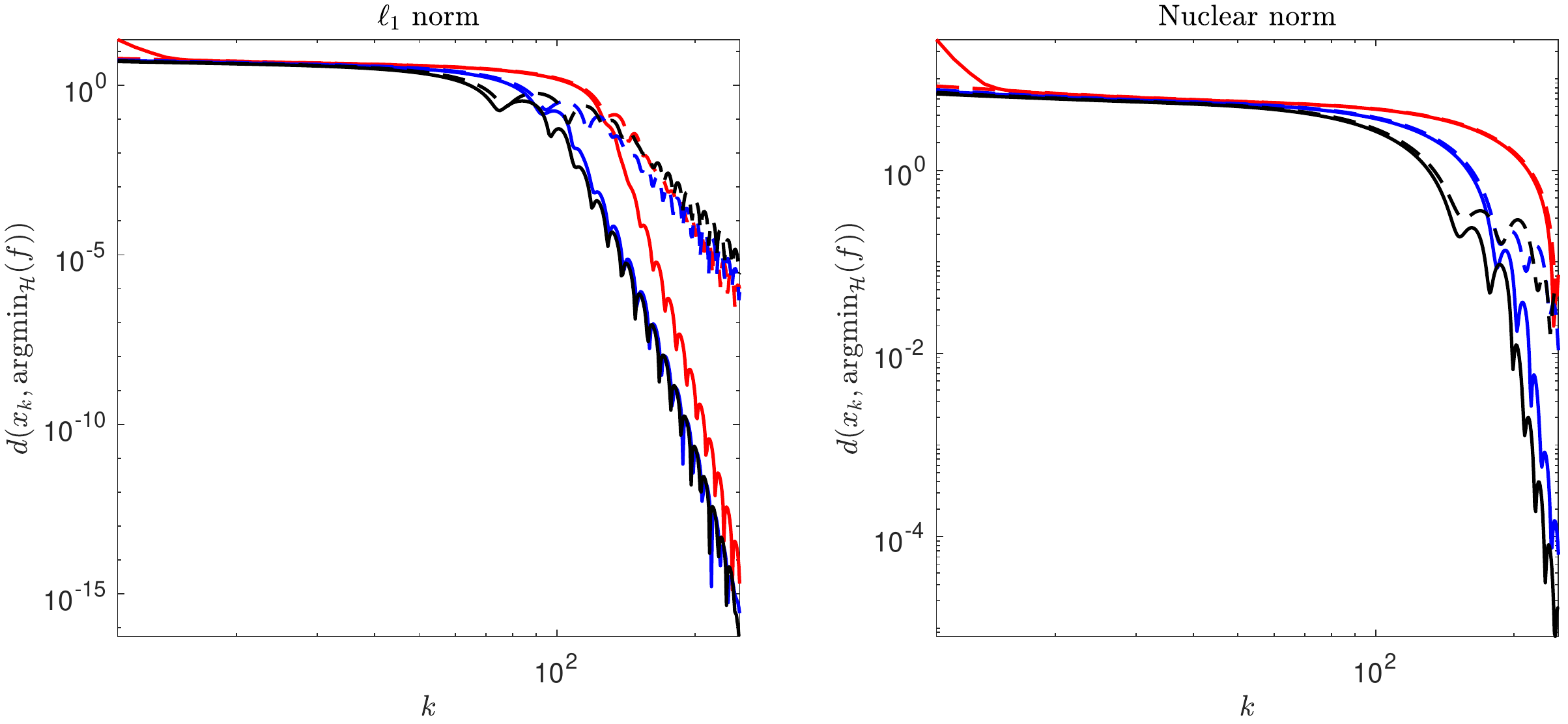}
\caption{Evolution of $f(\prox^M_f(x_k))-\min_{\R^n} f$ and the distance of $x_k$ to $\argmin_{\R^n} f$, where $x_k$ is the iterate of either \IGAHD or FISTA, when solving \eqref{eq:minP} with different regularizers $g$.}
\label{fig:rls}
\end{figure}

\appendix \section{Auxiliary Lemmas}
%
\begin{lemma}[{\cite[Opial's Lemma]{Opial}}]\label{le.Opial} Let $S$ be a nonempty subset of $\cH$,
and $\seq{x_k}$ a sequence of elements of $\cH$. Assume that 
\begin{enumerate}[label={\rm (\roman*)}]
\item every sequential weak cluster point of $(x_k)$, as $k\to+\infty$, belongs to $S$; \label{le.Opial:cond1}
\item for every $z\in S$, $\lim_{k\to+\infty}\|x_k-z\|$ exists. \label{le.Opial:cond2}
\end{enumerate}
Then $\seq{x_k}$ converges weakly as $k\to+\infty$ to a point in $S$. 
\end{lemma}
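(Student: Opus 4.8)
The plan is to assemble the three classical ingredients behind Opial's lemma: boundedness of $\seq{x_k}$, the existence of at least one weak sequential cluster point, and---the crux of the argument---the uniqueness of such a cluster point. First I would note that $S\neq\emptyset$, so I may fix some $z\in S$; condition~\ref{le.Opial:cond2} ensures that $\norm{x_k-z}$ converges, hence is bounded, and therefore $\seq{x_k}$ is bounded in $\cH$. Since closed balls of a Hilbert space are weakly sequentially compact, the sequence admits at least one weak cluster point, and by condition~\ref{le.Opial:cond1} every such cluster point belongs to $S$.

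The main work lies in proving that this cluster point is unique. Suppose $x_{k_j}\rightharpoonup \xbar_1$ and $x_{l_m}\rightharpoonup \xbar_2$ are two weak cluster points, both lying in $S$ by~\ref{le.Opial:cond1}. The key device is the expansion
\[
\norm{x_k-\xbar_1}^2-\norm{x_k-\xbar_2}^2 = 2\dotp{\xbar_2-\xbar_1}{x_k} - \dotp{\xbar_2-\xbar_1}{\xbar_1+\xbar_2}.
\]
Because $\xbar_1,\xbar_2\in S$, condition~\ref{le.Opial:cond2} makes both anchor functions $\norm{x_k-\xbar_1}$ and $\norm{x_k-\xbar_2}$ convergent, so the left-hand side has a limit; consequently the real sequence $\dotp{\xbar_2-\xbar_1}{x_k}$ converges to some $\mu\in\R$. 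I would then read off $\mu$ along the two subsequences: weak convergence along $(k_j)$ gives $\mu=\dotp{\xbar_2-\xbar_1}{\xbar_1}$, and along $(l_m)$ gives $\mu=\dotp{\xbar_2-\xbar_1}{\xbar_2}$. Subtracting the two expressions yields $\norm{\xbar_2-\xbar_1}^2=0$, whence $\xbar_1=\xbar_2$.

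Finally, to pass from a unique weak cluster point $\xbar$ to weak convergence of the whole sequence, I would argue by contradiction: if $\seq{x_k}$ did not converge weakly to $\xbar$, there would be a weak neighborhood of $\xbar$ and a subsequence remaining outside it; this subsequence is bounded, hence admits a further weakly convergent subsequence, whose limit would be a weak cluster point distinct from $\xbar$, contradicting uniqueness. I expect the only delicate point to be the uniqueness step, where one must exploit that condition~\ref{le.Opial:cond2} holds \emph{simultaneously} for the two candidate limits $\xbar_1,\xbar_2\in S$: this is precisely what forces the linear functional $\dotp{\xbar_2-\xbar_1}{\cdot}$, evaluated along $\seq{x_k}$, to converge and thereby to take the same value on both weak limits.
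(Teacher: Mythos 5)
Your proof is correct and complete: it is the classical argument for Opial's lemma --- boundedness of $\seq{x_k}$ from condition~\ref{le.Opial:cond2}, weak sequential compactness of bounded sets in $\cH$, uniqueness of the weak cluster point via the expansion $\norm{x_k-\xbar_1}^2-\norm{x_k-\xbar_2}^2 = 2\dotp{\xbar_2-\xbar_1}{x_k} - \dotp{\xbar_2-\xbar_1}{\xbar_1+\xbar_2}$ evaluated along the two subsequences, and the standard subsequence extraction to upgrade a unique cluster point to weak convergence of the whole sequence. The paper states this lemma without proof, citing Opial's original article, so your argument supplies precisely the standard proof that the citation refers to.
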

\begin{lemma}\label{le.conv-seq} Let $\seq{v_k}$ be a sequence in $\cH$. Assume $\alpha-1>0$ and that the sequence $\seq{a_k}$ defined by
\begin{eqnarray}\label{basic-formula-l-1}
a_k \eqdef q_k+ \frac{k}{\alpha -1}\pa{q_k-q_{k-1}}.
\end{eqnarray}
strongly converges to some limit. Then $\seq{q_k}$ strongly converges to the same limit as $k\to+\infty$. 
\end{lemma}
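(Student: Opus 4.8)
The plan is to recognize \eqref{basic-formula-l-1} as a first-order linear recurrence for $\seq{q_k}$ and to solve it explicitly by a discrete integrating-factor (Abel summation) argument, mirroring the continuous statement \cite[Lemma~7.2]{APR2} (where one multiplies $q + \frac{t}{\alpha-1}\dot q = a$ by $t^{\alpha-2}$ and integrates). Writing $\mu \eqdef \alpha - 1 > 0$, the defining relation is equivalent to
\[
(\mu + k)\, q_k - k\, q_{k-1} = \mu\, a_k, \qquad k \geq 1.
\]
I would look for weights $c_k > 0$ turning the left-hand side into an exact difference $d_k q_k - d_{k-1} q_{k-1}$ after multiplication by $c_k$; this forces $d_k = (\mu+k)c_k$ and $d_{k-1} = k\, c_k$, hence the recursion $c_k/c_{k-1} = (\mu+k-1)/k$. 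Normalizing $c_0 = 1$ gives the closed form $c_k = \prod_{j=1}^{k} \frac{\mu + j - 1}{j}$ and $d_k = (\mu + k) c_k$, the discrete counterparts of $t^{\mu - 1}$ and $t^{\mu}$.

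With these factors the recurrence telescopes: summing $d_j q_j - d_{j-1} q_{j-1} = \mu c_j a_j$ from $j=1$ to $k$ yields
\[
q_k = \frac{d_0 q_0 + \mu \sum_{j=1}^{k} c_j a_j}{d_k},
\]
which exhibits $q_k$ as a weighted average of the $a_j$ plus a vanishing boundary term. The key identity I would extract is $\mu \sum_{j=1}^{k} c_j = d_k - d_0$ (read off by specializing to the constant solution $a_j \equiv q_j \equiv 1$, or directly from the telescoping), so that the weights $w_{k,j} \eqdef \mu c_j / d_k$ are non-negative and satisfy $\sum_{j=1}^{k} w_{k,j} = (d_k - d_0)/d_k$.

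It remains to pass to the limit. First I would check $d_k \to +\infty$: since $d_k = \mu\bpa{1 + \sum_{j=1}^{k} c_j}$ and $c_k = \Gamma(\mu+k)/(\Gamma(\mu)\,k!) \sim k^{\mu-1}/\Gamma(\mu)$ (from the standard ratio asymptotics of the Gamma function), the series $\sum_j c_j$ diverges precisely because $\mu > 0$, giving $d_k \to +\infty$ and $\sum_{j} w_{k,j} \to 1$. Writing $a_j = \ell + \eps_j$ with $\eps_j \to 0$, the boundary contribution $d_0(q_0 - \ell)/d_k \to 0$, and what survives is $\sum_{j=1}^{k} w_{k,j}\eps_j$. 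The main obstacle — and the only genuinely non-routine point — is to show that this weighted average of a null sequence is itself null. I would invoke a Toeplitz/Silverman-type summability argument: the weights are non-negative, of uniformly bounded total mass ($\sum_j w_{k,j} \to 1$), and satisfy $w_{k,j} \to 0$ for each fixed $j$ (as $d_k \to +\infty$); hence the triangular averaging operator is regular and maps $\eps_j \to 0$ to $0$. This step must be run with norms since $\seq{q_k}$ is $\cH$-valued, but the Toeplitz lemma holds verbatim for sequences in a normed space. Combining the three limits gives $q_k \to \ell$, as claimed.
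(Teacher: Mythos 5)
Your proof is correct, and it is worth noting that the paper itself states Lemma~\ref{le.conv-seq} in the appendix \emph{without} any proof (it is the discrete counterpart of the continuous result \cite[Lemma~7.2]{APR2} invoked in Theorem~\ref{ACFR,rescale_convergence_continu}), so your argument supplies a proof the paper omits. Each step checks out: with $\mu=\alpha-1$ the relation is indeed $(\mu+k)q_k-kq_{k-1}=\mu a_k$; the integrating factors $c_k=\prod_{j=1}^k\frac{\mu+j-1}{j}$, $d_k=(\mu+k)c_k$ satisfy $d_j-d_{j-1}=\mu c_j$ (since $d_{j-1}=jc_j$), which gives both the telescoping formula $d_kq_k=d_0q_0+\mu\sum_{j=1}^k c_ja_j$ and the weight identity $\mu\sum_{j=1}^k c_j=d_k-d_0$; the Gamma-function asymptotics $c_k\sim k^{\mu-1}/\Gamma(\mu)$ ensure $d_k\to+\infty$ precisely because $\mu>0$ (the series $\sum_j j^{\mu-1}$ diverges); and the final Toeplitz step is sound in the Hilbert-valued setting, since the triangle inequality with non-negative weights reduces it to the scalar Toeplitz lemma applied to $\|a_j-\ell\|$. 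This is the faithful discretization of the continuous integrating-factor proof (multiply by $t^{\alpha-2}$ and integrate) underlying \cite[Lemma~7.2]{APR2}, which is exactly the route one would expect the authors to have in mind; your closed-form solution via $q_k=\bpa{d_0q_0+\mu\sum_{j=1}^k c_ja_j}/d_k$ makes the averaging structure explicit and yields a self-contained argument.
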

\begin{lemma}\label{diff-ineq-disc} 
Given $\alpha\ge 3$, let $\seq{a_k}$, $\seq{\omega_k}$ be two sequences of non-negative numbers such that
\[
a_{k+1} \leq \pa{1-\frac{\alpha}{k}} a_k + \omega_k
\]
for all $k\geq 1$. If $\sum_{k\in\mathbb N} k\omega_k <+\infty$, then $\sum_{k \in \mathbb N} a_k < +\infty$.
\end{lemma}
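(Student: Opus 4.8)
The plan is to convert the one-step contraction estimate into a summable bound by a discrete integration-by-parts (Abel summation) argument, which is the exact discrete counterpart of bounding $\int a\,dt$ from a differential inequality $\dot a \le -\frac{\alpha}{t}a + \omega$. First I would rewrite the hypothesis $a_{k+1} \le \pa{1-\frac{\alpha}{k}}a_k + \omega_k$ in the equivalent form
\[
\frac{\alpha}{k}\,a_k \;\le\; a_k - a_{k+1} + \omega_k ,
\]
which is valid for every $k \ge 1$ (it is the hypothesis rearranged, and remains correct even for the small indices where $1-\alpha/k<0$). Multiplying through by $k$ and summing from $k=1$ to $N$ gives
\[
\alpha \sum_{k=1}^N a_k \;\le\; \sum_{k=1}^N k\,(a_k - a_{k+1}) + \sum_{k=1}^N k\,\omega_k .
\]

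The key step is the Abel summation identity
\[
\sum_{k=1}^N k\,(a_k - a_{k+1}) \;=\; \sum_{k=1}^N a_k - N a_{N+1},
\]
obtained by reindexing $\sum_{k=1}^N k\,a_{k+1} = \sum_{k=2}^{N+1}(k-1)a_k$ and collecting the coefficients of each $a_k$. Substituting this identity into the previous inequality and transferring $\sum_{k=1}^N a_k$ to the left-hand side, I obtain
\[
(\alpha-1)\sum_{k=1}^N a_k + N a_{N+1} \;\le\; \sum_{k=1}^N k\,\omega_k .
\]

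It then suffices to discard the boundary term: since every $a_k \ge 0$ we have $N a_{N+1}\ge 0$, and since $\alpha \ge 3 > 1$ the coefficient $\alpha-1$ is strictly positive, so
\[
\sum_{k=1}^N a_k \;\le\; \frac{1}{\alpha-1}\sum_{k=1}^N k\,\omega_k \;\le\; \frac{1}{\alpha-1}\sum_{k\in\N} k\,\omega_k .
\]
Letting $N\to+\infty$ and invoking the hypothesis $\sum_{k\in\N} k\omega_k<+\infty$ yields $\sum_{k\in\N} a_k<+\infty$, as claimed.

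I do not expect a genuine obstacle here: the argument is elementary once the recursion is rearranged so that the weighted sum telescopes cleanly under Abel summation. The only point requiring a little care is the bookkeeping in the reindexing, and in particular checking that the leftover boundary contribution $N a_{N+1}$ carries the favorable (non-negative) sign, so that it can simply be dropped rather than estimated. It is precisely this sign, together with $\alpha>1$, that makes the partial sums uniformly bounded and hence the series convergent.
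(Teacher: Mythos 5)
Your proof is correct. Note that the paper states this lemma in its appendix as an auxiliary result \emph{without} proof (it is a known estimate from the literature on inertial algorithms, in the spirit of \cite{ACPR}), so there is no argument in the paper to compare against; your Abel-summation argument stands as a complete, self-contained justification. The rearrangement to $\frac{\alpha}{k}a_k \leq a_k - a_{k+1} + \omega_k$, the telescoping identity $\sum_{k=1}^N k(a_k - a_{k+1}) = \sum_{k=1}^N a_k - N a_{N+1}$, and the sign of the discarded boundary term are all handled correctly, giving the uniform bound $(\alpha-1)\sum_{k=1}^N a_k \leq \sum_{k=1}^N k\omega_k$. One small observation worth recording: your argument only uses $\alpha > 1$, not the stated $\alpha \geq 3$, so it in fact proves a slightly more general statement than the lemma as formulated.
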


\end{document}